\documentclass[aap]{imsart}

%% Packages
\RequirePackage{amsthm,amsmath,amsfonts,amssymb,mathtools}
\RequirePackage[numbers]{natbib}
\RequirePackage[colorlinks,citecolor=blue,urlcolor=blue]{hyperref}
\RequirePackage{graphicx}
\usepackage{bbm}
\usepackage{enumerate}
\usepackage[dvipsnames]{xcolor}
\usepackage{hyperref}
\usepackage{tikz}
\usepackage{marginnote}
\usetikzlibrary{arrows}

\startlocaldefs
%%%%%%%%%%%%%%%%%%%%%%%%%%%%%%%%%%%%%%%%%%%%%%
%%                                          %%
%% Uncomment next line to change            %%
%% the type of equation numbering           %%
%%                                          %%
%%%%%%%%%%%%%%%%%%%%%%%%%%%%%%%%%%%%%%%%%%%%%%
\numberwithin{equation}{section}
%%%%%%%%%%%%%%%%%%%%%%%%%%%%%%%%%%%%%%%%%%%%%%
%%                                          %%
%% For Axiom, Claim, Corollary, Hypothesis, %%
%% Lemma, Theorem, Proposition              %%
%% use \theoremstyle{plain}                 %%
%%                                          %%
%%%%%%%%%%%%%%%%%%%%%%%%%%%%%%%%%%%%%%%%%%%%%%
\theoremstyle{plain}

\newtheorem{theorem}{Theorem}[section]
\newtheorem{lemma}[theorem]{Lemma}
\newtheorem{corollary}[theorem]{Corollary}
\newtheorem{proposition}[theorem]{Proposition}

%%%%%%%%%%%%%%%%%%%%%%%%%%%%%%%%%%%%%%%%%%%%%%
%%                                          %%
%% For Assumption, Definition, Example,     %%
%% Notation, Property, Remark, Fact         %%
%% use \theoremstyle{remark}                %%
%%                                          %%
%%%%%%%%%%%%%%%%%%%%%%%%%%%%%%%%%%%%%%%%%%%%%%
\theoremstyle{remark}
\newtheorem{definition}[theorem]{Definition}

\newtheorem{remark}[theorem]{Remark}

%%%%%%%%%%%%%%%%%%%%%%%%%%%%%%%%%%%%%%%%%%%%%%
%% Please put your definitions here:        %%
%%%%%%%%%%%%%%%%%%%%%%%%%%%%%%%%%%%%%%%%%%%%%%
\renewcommand{\P}{\mathbb{P}}
\newcommand{\E}{\mathbb{E}}
\newcommand{\R}{\mathbb{R}}
\newcommand{\N}{\mathbb{N}}
\newcommand{\Z}{\mathbb{Z}}

\newcommand{\ind}{\mathbbm{1}}
\newcommand{\Block}{\mathtt{Block}}
\newcommand{\block}{\mathrm{block}}
\newcommand{\sspace}{\mathrm{space}}
\newcommand{\ttime}{\mathrm{time}}
\newcommand{\dens}{\mathrm{dens}}

\DeclareMathOperator{\Supp}{Supp}
\DeclareMathOperator{\Var}{Var}
\DeclarePairedDelimiter{\norm}{\lVert}{\rVert}
\DeclarePairedDelimiter{\abs}{\lvert}{\rvert}

\endlocaldefs

%\makeatletter
%\def\journal@name{}
%\makeatother

\begin{document}

\begin{frontmatter}
\title{Survival and complete convergence for a branching annihilating random walk}
%\title{A sample article title with some additional note\thanksref{t1}}
\runtitle{Survival and complete convergence for a BARW}
%\thankstext{T1}{A sample additional note to the title.}

\begin{aug}
%%%%%%%%%%%%%%%%%%%%%%%%%%%%%%%%%%%%%%%%%%%%%%%
%% Only one address is permitted per author. %%
%% Only division, organization and e-mail is %%
%% included in the address.                  %%
%% Additional information can be included in %%
%% the Acknowledgments section if necessary. %%
%% ORCID can be inserted by command:         %%
%% \orcid{0000-0000-0000-0000}               %%
%%%%%%%%%%%%%%%%%%%%%%%%%%%%%%%%%%%%%%%%%%%%%%%
\author[A]{\fnms{Matthias}~\snm{Birkner}\ead[label=e1]{birkner@mathematik.uni-mainz.de}},
\author[A,B]{\fnms{Alice}~\snm{Callegaro}\ead[label=e2]{alice.callegaro@tum.de}}
\author[C]{\fnms{Jiří}~\snm{Černý}\ead[label=e3]{jiri.cerny@unibas.ch}\orcid{0000-0001-7723-9245}}
\author[B]{\fnms{Nina}~\snm{Gantert}\ead[label=e4]{nina.gantert@tum.de}\orcid{0000-0003-0811-3651}}
\and
\author[A,C]{\fnms{Pascal}~\snm{Oswald}\ead[label=e5]{pascalamadeus.oswald@unibas.ch}}
%%%%%%%%%%%%%%%%%%%%%%%%%%%%%%%%%%%%%%%%%%%%%%
%% Addresses                                %%
%%%%%%%%%%%%%%%%%%%%%%%%%%%%%%%%%%%%%%%%%%%%%%
\address[A]{%Department,
Johannes Gutenberg-Universit\"{a}t Mainz\printead[presep={,\ }]{e1}}

\address[B]{%Department,
Technical University of Munich\printead[presep={,\ }]{e2,e4}}

\address[C]{%Department,
University of Basel\printead[presep={,\ }]{e3,e5}}
\end{aug} 

\begin{abstract}
  We study a discrete-time branching annihilating random walk (BARW) on
  the $d$-dimensional lattice.  Each particle produces a Poissonian
  number of offspring with mean $\mu$ which independently move to a
  uniformly chosen site within a fixed distance $R$ from their parent's
  position. Whenever a site is occupied by at least two particles, all
  the particles at that site are annihilated. We prove that for any
  $\mu>1$ the process survives when $R$ is sufficiently large. For fixed
  $R$ we show that the process dies out if $\mu$ is too small or too
  large. Furthermore, we exhibit an interval of $\mu$-values for which
  the process survives and possesses a unique non-trivial ergodic
  equilibrium for $R$ sufficiently large.  We also prove complete
  convergence for that case.
\end{abstract}

%%% TO ADD
\begin{keyword}[class=MSC]
\kwd[Primary ]{60K35}
\kwd[; secondary ]{92D25}
\end{keyword}

\begin{keyword}
\kwd{Branching annihilating random walk}
\kwd{survival}
\kwd{extinction}
\kwd{complete convergence}
\kwd{non-monotone interacting particle systems}
\end{keyword}

\end{frontmatter}
%%%%%%%%%%%%%%%%%%%%%%%%%%%%%%%%%%%%%%%%%%%%%%
%% Please use \tableofcontents for articles %%
%% with 50 pages and more                   %%
%%%%%%%%%%%%%%%%%%%%%%%%%%%%%%%%%%%%%%%%%%%%%%
\tableofcontents

\section{Introduction}

Branching random walks are well-known models for populations evolving in space. In these systems individuals are represented as particles which reproduce and move randomly in space, independently for
different families. For instance, the children may take
i.i.d.~displacements from their mother particle or, in a more general
model, the parent particle may generate a configuration of children
according to some point process.
We refer to \cite{shi2015branching} for an introduction to this very active research topic.

Our goal is to model a population which competes for resources as a
particle system in which particles reproduce, move randomly in space, and
compete with each other locally. We choose here a specific and rather radical form of
interaction: whenever two or more particles are on the same site, they
annihilate. We discuss in Section~\ref{ssec:generalisations} below more general forms of competition. The specific annihilation mechanism makes the system non-attractive in the
sense of interacting particle systems, i.e.\ adding more particles initially can
stochastically decrease the law of the configuration at later times.

A first question about branching
random walks is if the system has a strictly positive survival
probability. In the classical case, that is without annihilation, the answer
is well-known since the number of particles at time $n$ forms a Galton--Watson
process. However, taking into account annihilation, the question is much
more difficult and there are relatively few mathematical papers addressing it, see the
discussion of related literature in Section~\ref{sect:litdiscussion} below.

Assuming that the parameters of the model are such that the survival
probability is indeed strictly positive, the next question is about
invariant measures and the convergence towards the invariant measure in
the case of survival. As for the classical branching random walk or the
contact process, it is clear that the Dirac measure on the empty
configuration is invariant. We can show for our model that in a certain
range of parameters there is complete convergence, i.e. there is exactly
one non-trivial ergodic invariant measure and the law of the process, conditioned
on survival, approaches this invariant measure.

Our model allows for a representation as a probabilistic cellular
automaton. Questions about ergodicity and complete convergence are
notoriously difficult for such systems, we refer to
\cite{mairesse2014around} for an introduction. If we consider the
iteration of the expected number of particles at the sites of the
lattice, we have a deterministic system, a coupled map lattice, see
Section~\ref{sect:CML} below, and especially~\eqref{eqn:cmlattice}. This system is of independent interest and
we expect that it admits, in a certain range of parameters, travelling
wave solutions. Hence our model can be interpreted as a stochastic
perturbation of the coupled map lattice, and this interpretation raises
several interesting questions, which we point out in Section \ref{sec:questions} below.

Let us now give a more precise definition of the model and describe our
results. We study a process $\eta = (\eta_n(x):  x \in \Z^d,  n \ge 0)$
evolving in discrete time on $\Z^d$, where $\eta_n(x)$ denotes the state
of site $x$ at time $n$. We write $\eta_n(x)=1$ if site $x$ is occupied
by exactly one particle at time $n$ and $\eta_n(x)=0$ otherwise. We
denote by $\norm{ \,\cdot\, }$ the sup-norm on $\Z^d$ and define
$B_R(x) = \{ y \in \Z^d : \norm{ y-x} \le R \}$ to be the $d$-dimensional
ball (box) of radius $R \in \N$ centred at $x\in \mathbb Z^d$.
We set $V_R=2R+1$ to be its side length, so that its volume is $V_R^d$.

For fixed $R \in \N$, $\mu >0$, and an initial particle configuration
$\eta_0 \in \{0,1\}^{\Z^d}$, the configurations at later times are
obtained recursively as follows. Given $\eta_n$, $n \ge 0$, for every
$x \in \Z^d$ with $\eta_n(x)=1$ the particle at $z$ dies and gives birth
to a Poisson number of children with mean $\mu$. Each child moves
independently to a uniformly chosen site in $B_R(x)$. Whenever there is
more than one particle at a given site, all the particles at that
site are killed.  This means that if two (or more) children of the same
parent jump to the same site they will disappear, but also children
coming from different parents who jump to the same site will annihilate.
The particles remaining after the annihilation make up the configuration
$\eta_{n+1}$.

The thinning and superposition properties of the Poisson distribution
give the following equivalent description of the model, which is
particularly convenient to carry out calculations.
For a configuration $\eta \in \{ 0,1 \}^{\Z^d}$ and $z \in \Z^d$, define first
the (local) density of particles at $z$ by
\begin{equation}
  \label{eqn:density}
  \delta_R(x; \eta) := V_R^{-d} \sum_{y \in B_R(x)} \eta(y) \in [0,1].
\end{equation}
Fix $\eta_n$ and denote by $N_{n+1}(x)$ the number of
newborn particles at $x$ in the next generation before the annihilation
occurs. This is given by the superposition of the offspring of all particles that
can move to $x$, that is of all $y \in B_R(x)$ with $\eta_n(y)=1$. Thus
$N_{n+1}(x)$ is a Poisson random variable with mean
$\mu \delta_R(x; \eta_n )$. Taking the annihilation into account, it then
holds that
\begin{equation}
  \label{eqn:next_gen_barw_0}
 \eta_{n+1}(x) =
 \begin{cases} 1 \quad \text{if } N_{n+1}(x) = 1, \\
 0 \quad \text{otherwise.}
 \end{cases}
 \end{equation}
Let
\begin{equation}
  \label{eqn:varphi}
  \varphi_{\mu}(w) = \mu w\, e^{-\mu w}, \quad w \in [0,\infty)
\end{equation}
denote the probability that a Poisson random variable with mean $\mu w$
equals 1. By construction, the random variables in the family
$(\eta_{n+1}(x): x \in \Z^d)$ are conditionally independent given $\eta_n$
and by \eqref{eqn:next_gen_barw_0}, \eqref{eqn:varphi} we can represent
our system as
\begin{equation}
  \label{eqn:next_gen_barw}
  \eta_{n+1}(x) =
  \begin{cases} 1 \quad \text{with probability }
    \varphi_{\mu}( \delta_R(x; \eta_n)), \\
    0 \quad \text{otherwise}.
  \end{cases}
\end{equation}
This gives a representation of $\eta$ as a particular example of a
probabilistic cellular automata. We point out that this representation is
only possible because we choose a Poisson offspring distribution. For
more detailed discussion of the assumptions of the model, see the
discussion in Section~\ref{ssec:generalisations} below.

\subsection{Main results}

We can now state the main results of this paper. For the intuition behind
them, we find it useful to first point out a few properties of the function
$\varphi_\mu $ introduced in \eqref{eqn:varphi} which governs the
behaviour of the process:
\begin{enumerate}[(a)]
  \item For $\mu \in (0,1]$, $\varphi_\mu$ has a unique fixpoint
   at $0$, which is attractive.
  \item For $\mu >1$, $\varphi_\mu $ has two fixpoints,
  $0$ and $\theta_\mu = \mu^{-1} \ln \mu$. In this case $0$ is always repulsive.
  \item For $\mu \in (1, e^2)$, $\theta_\mu $ is an attractive fixpoint.
  \item For $\mu > e^2$, there are no attractive fixpoints.
\end{enumerate}

In the case (d), the one point iteration $x\mapsto \varphi_\mu(x)$ has a
rich behaviour. Depending on the value of $\mu $, there can be attractive periodic
orbits or a chaotic behaviour, we refer to \cite[Chapter 9]{thompson2002nonlinear} for discussion about the period doubling and chaotic behaviour of iterated maps.

\begin{figure}[hb]
  \centering
  \begin{tikzpicture}[yscale=6,xscale=2]
    \draw[thick, -stealth] (-0.2, 0) -- (4.1, 0) node[below] {$w$};
    \draw[thick, -stealth] (0, {-0.1/6}) -- (0, 0.45) node[left] {$\varphi_\mu $};
    \draw (0.05, {exp(-1)}) -- (-0.05, {exp(-1)}) node [left] {$1/e$};
    \draw (1,{0.1/6}) -- (1,{-0.1/6}) node[below] {$1$};
    \draw[domain=0:4, smooth, variable=\x, samples=50, thick ] plot (\x, {0.7*\x*exp(-0.7*\x)});
    \draw[domain=0:4, smooth, variable=\x, samples=50  ] plot (\x, {2.0*\x*exp(-2.0*\x)});
    \draw[domain=0:4, smooth, variable=\x, samples=50, dashed ] plot (\x, {8.0*\x*exp(-8.0*\x)});
    \draw[domain=0:0.45, smooth, variable=\x ] plot (\x, \x);
  \end{tikzpicture}
  \caption{Graphs of $\varphi_\mu$ for $\mu = 0.7$ (thick), $2$ and
    $8$ (dashed), together with the identity function.}
\end{figure}
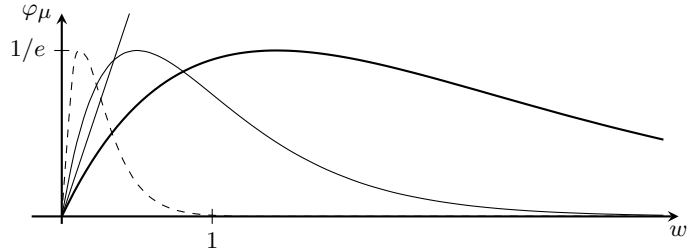

\paragraph*{Extinction}

Our first result identifies a range of parameters $(\mu ,R)$ where the
process dies out a.s. Here, we say that $\eta $ goes extinct \emph{locally} if
$\lim_{n\to\infty} \eta_n(x) = 0$ for every $x\in \mathbb Z^d$, and that
$\eta $ goes extinct \emph{globally}, if $\eta_n \equiv 0$ for all $n$ large
enough.

\begin{theorem}
  \label{thm:R_fixed_extinction}
  For $R \in \N$, let $\mu_1(R), \mu_2(R)$ be the two real solutions of
  \begin{equation*}
    V_R^d \, \varphi_{\mu}\big(V_R^{-d}\big) = 1
  \end{equation*}
  with $1 < \mu_1(R) < \mu_2(R) < \infty$. If $\mu < \mu_1(R)$ or
  $\mu > \mu_2(R)$, then, for all initial conditions $\eta$ goes locally
  extinct a.s.,~and for all initial conditions containing
  only a finite number of particles $\eta $ goes extinct globally
  a.s.~Furthermore,
  $\mu_1(R) \to 1$ and $\mu_2(R) \to + \infty$ as $R \to \infty$.
\end{theorem}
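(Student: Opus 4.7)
The plan is to reduce extinction to a one-line expectation estimate: an elementary sharpening of the trivial bound $\varphi_\mu(w) \leq \mu w$ that exploits the discreteness of the local density $\delta_R$ shows that $\E[\eta_n(x)]$ contracts by the factor $g(\mu) := \mu e^{-\mu/V_R^d}$ per generation, and the hypothesis $\mu < \mu_1(R)$ or $\mu > \mu_2(R)$ turns out to be precisely $g(\mu) < 1$. To pin down the thresholds themselves, observe that $V_R^d\,\varphi_\mu(V_R^{-d}) = g(\mu)$, a function that vanishes at $0$ and at $\infty$ and has a unique maximum $V_R^d/e$ at $\mu = V_R^d$. Since $g(1) = e^{-1/V_R^d} < 1$ and $V_R^d/e > 1$ for $R \geq 1$, the equation $g(\mu) = 1$ has exactly two solutions with $1 < \mu_1(R) < V_R^d < \mu_2(R) < \infty$. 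Both roots solve $V_R^d \ln \mu = \mu$: for the small root, $\ln \mu_1 = \mu_1/V_R^d \leq 1$ keeps $\mu_1$ bounded in $R$, forcing $\mu_1 \to 1$; for the large root, $\mu_2 > V_R^d$ together with $\ln \mu_2 = \mu_2/V_R^d$ forces $\mu_2 \to \infty$.

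For the extinction itself, the key inequality is that since $\delta_R(x; \eta)$ is a non-negative integer multiple of $V_R^{-d}$, either $\delta_R = 0$ (in which case $\varphi_\mu(\delta_R) = 0$) or $\delta_R \geq V_R^{-d}$ (in which case $e^{-\mu \delta_R} \leq e^{-\mu/V_R^d}$); in both cases,
\begin{equation*}
  \varphi_\mu(\delta_R(x; \eta)) \leq g(\mu) \, \delta_R(x; \eta).
\end{equation*}
Setting $\lambda := g(\mu) < 1$ and $u_n(x) := \E[\eta_n(x)]$, taking expectation in \eqref{eqn:next_gen_barw} yields
\begin{equation*}
  u_{n+1}(x) \leq \lambda\, V_R^{-d} \sum_{y \in B_R(x)} u_n(y).
\end{equation*}
Since $u_0(x) \leq 1$, a straightforward induction gives $u_n(x) \leq \lambda^n$ uniformly in $x$ and for every initial condition. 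Then $\sum_n \P(\eta_n(x)=1) = \sum_n u_n(x) < \infty$, and Borel--Cantelli yields $\eta_n(x) = 0$ eventually a.s., i.e.\ local extinction. For a finite initial configuration, summing the recursion over $x$ (each $y$ is counted by its $V_R^d$ neighbours) gives $\sum_x u_n(x) \leq \lambda^n |\{x : \eta_0(x) = 1\}|$, so $\P(\eta_n \not\equiv 0) \leq \lambda^n |\{x : \eta_0(x) = 1\}|$ is summable and Borel--Cantelli delivers global extinction.

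There is no genuine obstacle; the one point worth emphasising is that the discreteness of $\delta_R$ automatically supplies the damping factor $e^{-\mu/V_R^d}$ absent from the naive branching-random-walk comparison, and this is precisely what turns the Galton--Watson threshold $\mu = 1$ into two thresholds $\mu_1(R), \mu_2(R)$. Intuitively, the upper threshold appears because at very large $\mu$ a single parent produces so many children that its own offspring annihilate each other with high probability, even in the absence of competition from other families.
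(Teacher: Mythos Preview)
Your proof is correct and follows essentially the same approach as the paper: both hinge on the discreteness inequality $\varphi_\mu(w) \le \widetilde{\mu}\, w$ for $w \in V_R^{-d}\Z \cap [0,1]$ with $\widetilde{\mu} = \mu e^{-\mu/V_R^d}$, then deduce geometric decay of $\E[\eta_n(x)]$ and conclude via Borel--Cantelli. Your version is slightly more streamlined in that you work directly with expectations rather than invoking the coupling of Lemma~\ref{lem:coupling_lemma}, you spell out the global-extinction step by summing over $x$ (the paper leaves this implicit in the random-walk kernel representation), and you establish the asymptotics of $\mu_1(R), \mu_2(R)$ by an elementary argument on $\ln\mu = \mu/V_R^d$ instead of the Lambert $W$ expansions the paper uses.
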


The result of the proposition is not optimal: we expect (based on
  simulations, see Figure~\ref{fig:phase_diagram} in
  Section~\ref{sec:questions} below) that the process goes
extinct for many values $(\mu, R)$ outside of the specified range.  On
the other hand, its proof is relatively simple. It is given in
Section~\ref{sec:extinction} below, and is based on the observation that
for $(\mu ,R)$ in the specified range the killing by annihilation among
siblings is already strong enough to make the expected number of
``surviving'' offspring of a single particle strictly smaller than one, and
thus the branching effectively subcritical, even though $\mu>1$.

\begin{remark}
  The two values $\mu_1(R)$ and $\mu_2(R)$ can be given
  explicitly as
  \begin{equation*}
    \mu_1(R) = -V_R^d \, W_0 ( -V_R^{-d}),
    \quad \mu_2(R) = -V_R^d \, W_{-1} ( -V_R^{-d}),
  \end{equation*}
  where $W_0$ and $W_{-1}$ are the two real branches of the Lambert $W$ function.
  This also describes their asymptotic behaviour as $R\to\infty$, see
  \eqref{eq:mu1asympt} and \eqref{eq:mu2asympt} in the proof of
  Theorem~\ref{thm:R_fixed_extinction} below.
\end{remark}

\paragraph*{Survival}

The second result identifies a range of parameters where it is possible
that the process survives, by which we mean that it survives locally, that is, for every
$x\in \mathbb Z^d$ the set of times $n$ when $\eta_n(x) =1$ is unbounded.
Similarly as in Theorem~\ref{thm:R_fixed_extinction}, the identified
range is not optimal.
  
\begin{theorem}
  \label{thm:survival_eta}
  For every $\mu > 1$ there exists $R_{\mu} \in \N$ such that $\eta$
  survives with positive probability from any non-trivial initial
  condition when $R \ge R_{\mu}$.
\end{theorem}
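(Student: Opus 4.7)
The plan is a coarse-graining argument coupling $\eta$ with a supercritical $1$-dependent oriented percolation on $\Z^d \times \N$. The key input is that, as $R \to \infty$, the densities $\delta_R(y;\eta_n)$ concentrate sharply around their conditional expectations, so their evolution approximately follows iteration of the deterministic map $w \mapsto \varphi_\mu(w)$. Since $\varphi_\mu'(0) = \mu > 1$, we have $\varphi_\mu(w) > w$ on a right neighbourhood of $0$, and I first fix $k \in \N$, $\rho_* > 0$ and $\alpha > 0$ with
\begin{equation*}
\varphi_\mu^k\bigl([\rho_* - 2\alpha,\, \rho_* + 2\alpha]\bigr) \subset [\rho_* - \alpha,\, \rho_* + \alpha].
\end{equation*}
For $\mu \in (1, e^2)$ such a choice exists with $k = 1$ and $\rho_*$ in a neighbourhood of the attracting fixed point $\theta_\mu = \mu^{-1}\ln\mu$; for $\mu \ge e^2$, where $\theta_\mu$ becomes repelling and $\varphi_\mu$ enters a period-doubling cascade and ultimately chaos, one instead anchors the argument at a point of an attracting $k$-periodic orbit of $\varphi_\mu$ and runs the block construction on the $k$-step time scale. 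To simplify exposition I describe the case $k = 1$; the general case follows by replacing $\eta_{n+1}$ by $\eta_{n+k}$ throughout.

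Next I perform the block construction. I choose mesoscopic lengths $L = L_R$ and $K = K_R$ with $R \le L_R \le K_R$ and $K_R - L_R$ a sufficiently large multiple of $R$, and declare the block $C_L(x) := x + \bigl([-L, L] \cap \Z\bigr)^d$ \emph{good at time $n$} if $\delta_R(y;\eta_n) \in [\rho_* - 2\alpha,\, \rho_* + 2\alpha]$ for every $y \in C_K(x)$. Conditionally on $\eta_n$, the variables $\bigl(\eta_{n+1}(z)\bigr)_z$ are independent Bernoullis with parameters $\varphi_\mu(\delta_R(z;\eta_n))$, and goodness of $C_L(x)$ at time $n$ forces these parameters to lie in $[\rho_* - \alpha,\, \rho_* + \alpha]$ for every $z$ needed to compute $\delta_R(y';\eta_{n+1})$ for $y' \in C_K(x)$. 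Hoeffding's inequality applied to $V_R^d\,\delta_R(y';\eta_{n+1}) = \sum_{z \in B_R(y')} \eta_{n+1}(z)$ gives, for each such $y'$,
\begin{equation*}
\P\bigl(\,\lvert \delta_R(y';\eta_{n+1}) - \E[\delta_R(y';\eta_{n+1}) \mid \eta_n] \rvert > \alpha \,\bigm|\, \eta_n\,\bigr) \le 2\, e^{-2\alpha^2 V_R^d}.
\end{equation*}
A union bound over the $O(K_R^d)$ points $y' \in C_K(x)$ then yields that $C_L(x)$ and a fixed finite number of its spatial neighbours are all good at time $n+1$ with probability at least $1 - O(K_R^d)\, e^{-2\alpha^2 V_R^d}$, which tends to $1$ as $R \to \infty$. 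A Liggett--Schonmann--Stacey-type comparison then dominates the good-block process stochastically by a supercritical Bernoulli oriented percolation on $L_R \Z^d \times \N$, and local survival of $\eta$ follows from the existence of an infinite open cluster in the latter.

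It remains to produce a good block with positive probability from any non-trivial initial configuration. When the density is much smaller than $1/\mu$, the map $\varphi_\mu$ is effectively linear and $\eta$ is well-approximated by a supercritical branching random walk with Poisson offspring mean $\mu$; starting from a single occupied site this BRW has $\sim V_R^d$ descendants distributed nearly uniformly over a ball of radius $O(R\log V_R)$ by time $\sim \log V_R / \log\mu$ on the event of its survival, and with positive probability these descendants constitute a good block. I foresee two principal obstacles. First, the non-monotonicity of $\eta$ precludes any stochastic comparison with a smaller monotone process, so the whole argument must proceed through concentration estimates rather than couplings. Second, in the chaotic sub-regime of $\mu \ge e^2$, pinning down an attracting $k$-periodic orbit of $\varphi_\mu$ on which to base the good-block definition---and marrying the $k$-step dynamics to the concentration estimates---is likely to consume a substantial part of the proof.
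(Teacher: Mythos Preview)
Your construction has two genuine gaps, both of which the paper's approach sidesteps in a way you did not anticipate.

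First, your one-step block propagation does not give spatial expansion. Goodness of $C_L(x)$ at time $n$ controls $\delta_R(\cdot;\eta_n)$ only on $C_K(x)$; to conclude that even $C_L(x)$ itself---let alone a spatial neighbour $C_L(x')$---is good at time $n+1$ you need control of $\delta_R(\cdot;\eta_{n+1})$ on $C_K(x)$ (respectively $C_K(x')$), which in turn requires control of $\delta_R(\cdot;\eta_n)$ on $C_{K+R}(x)$ (respectively $C_{K+R}(x')\supset C_{K+L}(x)$). Neither set is contained in $C_K(x)$. With the density target a flat interval $[\rho_*-2\alpha,\rho_*+2\alpha]$, the region of control can only \emph{shrink} by $R$ per time step, so nothing spreads. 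The paper handles this by replacing the flat target with a travelling-wave-shaped lower profile $\xi_n^-$ (Lemmas~\ref{lem:f1dprofilelbd} and~\ref{lem:xi_n}): the profile rises from a small value $b\varepsilon_0^d$ at its boundary to a plateau $b$ in its interior, and the key property (Lemma~\ref{lem:xi_n}(iii)) is that one application of $a^d\delta_R(\cdot\,;\cdot)$ shifts the whole profile outward by $\lceil sR\rceil$. That outward shift is exactly the missing expansion mechanism.

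Second, and more fundamentally, for $\mu \ge e^2$ you propose to anchor at an attracting $k$-periodic orbit of $\varphi_\mu$. But $\varphi_\mu$ is a Ricker-type unimodal map, and for a positive-measure set of parameters in the chaotic regime no attracting periodic orbit exists at all; your scheme cannot cover those $\mu$. The paper avoids the attractor question entirely: it uses only that $\varphi_\mu'(0)=\mu>1$. Concretely, it picks $\widetilde a\in(1,\mu)$ and $b>0$ small enough that $\psi(w):=\widetilde a\,(w\wedge b)\le\varphi_\mu(w)$ on the range of densities, and then runs the whole block argument for the \emph{monotone} system $\widetilde\eta$ built from $\psi$ via the same uniforms (Lemma~\ref{lem:coupling_lemma}). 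Since $\psi$ is non-decreasing, $\widetilde\eta\le\eta$ pointwise, so survival of $\widetilde\eta$ implies survival of $\eta$. This is precisely the ``stochastic comparison with a smaller monotone process'' that you declared precluded by non-monotonicity; it is in fact the paper's first move, and it is what makes the proof work uniformly for every $\mu>1$ without any reference to fixed points or periodic orbits.
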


Inspection of the proof of Theorem \ref{thm:survival_eta} shows that
the following holds.
  
\begin{corollary}\label{cor:survival_eta_cor}
There is $R_0\in \N$ such
  that for every $R \ge R_0$ there exist two values
  $1 < \underline{\mu}_R < \overline{\mu}_R$ such that $\eta$
  survives
  with positive probability
  from any non-trivial initial condition when
  $\mu \in (\underline{\mu}_R, \overline{\mu}_R)$.
  Furthermore, on the event of survival, it holds that
  \begin{equation}
    \label{eq:positivedensity}
    \liminf_{N\to\infty} \frac{1}{N} \sum_{j=1}^N \eta_j(x) > 0
    \quad \text{a.s.\ for any } x \in \Z^d.
  \end{equation}
\end{corollary}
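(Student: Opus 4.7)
The plan is to revisit the block (renormalisation) construction that underlies the proof of Theorem~\ref{thm:survival_eta} and extract two pieces of additional information from it: (i) the range of $\mu$ for which the block event has probability close to $1$ is an open interval around any admissible value, and (ii) the dominating supercritical oriented percolation automatically yields a positive lower bound on the time-averages \eqref{eq:positivedensity}.

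For (i), I would first observe that the block event in Theorem~\ref{thm:survival_eta} is defined in terms of a finite number of Poisson offspring variables in a bounded space-time region; consequently its probability is continuous in $\mu$ for each fixed $R$. More importantly, the block construction works by picking a target density $\theta \approx \theta_\mu = \mu^{-1} \ln \mu$ near the attractive fixpoint of $\varphi_\mu$ and showing that, with high probability when $R$ is large, the local density in a macroscopic space-time block stays close to $\theta$. Since both $\theta_\mu$ and its attractiveness (bounded away from neutrality, $|\varphi_\mu'(\theta_\mu)|<1$) vary continuously in $\mu$ on $(1,e^2)$, the same construction works uniformly over any compact subinterval $[\mu_-,\mu_+]\subset(1,e^2)$, with a single $R_0$ depending on $\mu_\pm$. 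Choosing, e.g., $[\mu_-,\mu_+]=[2,3]$ produces the constant $R_0$ and the interval $(\underline{\mu}_R,\overline{\mu}_R)\supseteq[\mu_-,\mu_+]$ for every $R\ge R_0$.

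For (ii), I would exploit the fact that Theorem~\ref{thm:survival_eta} is proved by coupling $\eta$ to an oriented site percolation on $\mathbb{Z}^d\times\mathbb{Z}_+$ whose wet-site probability can be made arbitrarily close to $1$ for large $R$. The block is designed so that wetness of a macro-site $(y,k)$ forces a uniformly lower-bounded density of particles on a prescribed micro-block $B(y,k)\subset\mathbb{Z}^d\times\{kT,\dots,(k+1)T-1\}$. On the event of survival of $\eta$, the dominating oriented percolation also survives, and standard Durrett-type restart / shape theorem arguments for supercritical oriented percolation give that the Cesàro fraction of macro-times $k\le K$ at which the macro-site $(0,k)$ is wet is bounded below by a deterministic constant $c_1>0$ almost surely as $K\to\infty$. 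Combined with the uniform micro-block lower bound $\mathbb{E}[\eta_j(x)\mid\text{wet macro-site around }(x,j)]\ge c_2>0$ and a conditional Borel–Cantelli / ergodic argument (using independence of the Poisson variables across disjoint blocks), this yields $\liminf_N N^{-1}\sum_{j=1}^N\eta_j(x)\ge c_1c_2/T>0$ a.s.\ on survival.

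The main obstacle I anticipate is step (ii): because the BARW is non-attractive, one cannot invoke monotonicity to pass from block-level positive density to site-level positive density. The fix is to build the block so that the event "block at macro-site $(y,k)$ is good" is measurable with respect to an independent family of Poisson fields localised to disjoint space-time regions, so that a standard ergodic theorem for the dominating oriented percolation can be applied rigorously, and the conditional lower bound $c_2$ on $\mathbb{P}[\eta_j(x)=1\mid\text{good block}]$ is derived directly from the definition of the good block rather than from any correlation with future states of $\eta$.
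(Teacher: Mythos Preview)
Your overall strategy matches the paper's: both rely on the block construction and oriented-percolation comparison from the proof of Theorem~\ref{thm:survival_eta}. Your part~(ii) is correct and in fact more detailed than the paper, which simply invokes the positive density of the infinite cluster in supercritical oriented percolation (citing \cite{DurrettGriffeath1982} and \cite{BirknerCernyDepperschmidt2016}) without spelling out the passage from block-level density control to the pointwise statement $\liminf N^{-1}\sum_j \eta_j(x)>0$. Your conditional Borel--Cantelli step---using that on a good block $\delta_R(x;\eta_j)\in[b,1]$, hence $\P(\eta_{j+1}(x)=1\mid\mathcal F_j)\ge\min_{w\in[b,1]}\varphi_\mu(w)>0$---is exactly what is needed to fill that gap.

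However, your part~(i) misdescribes the mechanism of the survival block. The proof of Theorem~\ref{thm:survival_eta} does \emph{not} pick a target density near $\theta_\mu$ and does not require $\mu\in(1,e^2)$: it works via the monotone lower bound $\psi(w)=\widetilde a\,(w\wedge b)$ with $1<\widetilde a<\mu$ and suitably small $b>0$, and the good-block event is formulated entirely for the monotone process $\widetilde\eta$ driven by $\psi$. The uniformity in $\mu$ is then immediate and requires no continuity argument: for any compact $[\mu_-,\mu_+]\subset(1,\infty)$ one may fix $\widetilde a\in(1,\mu_-)$ and $b>0$ small enough that $\psi\le\varphi_\mu$ on $[0,1]\cap V_R^{-d}\Z$ for every $\mu\in[\mu_-,\mu_+]$; the resulting comparison profiles $\xi_n^-$ and error bound $q(T_\block,R)$ then do not depend on $\mu$ at all, so a single $R_0$ works for the whole interval. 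Your fixpoint-based description confuses the survival construction of Section~\ref{sec:survival} with the complete-convergence construction of Section~\ref{sec:complete_convergence}.

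For the same reason, your anticipated ``main obstacle'' about non-attractiveness of $\eta$ is a non-issue here: the good-block event is already defined for the monotone system $\widetilde\eta$, and the coupling $\widetilde\eta\le\eta$ from Lemma~\ref{lem:coupling_lemma} transfers the lower density bound to $\eta$ without any further monotonicity argument.
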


\paragraph*{Ergodicity and complete convergence}

The final set of results discusses the invariant measures of the
process, in the case when the system survives. For this we equip the state
  space $\{0,1\}^{\Z^d}$ with the product topology and the corresponding
  Borel $\sigma $-algebra. In these results we restrict ourselves to
$\mu \in (1,e^2)$, where the non-trivial fixpoint of $\varphi_\mu $ is
attractive, as pointed out above.

\begin{theorem}
  \label{thm:complete_convergence}
  For every $\mu \in (1,e^2)$ there is $R'_\mu < \infty$ such that for
  every $R \ge R'_\mu $ the process $\eta$ has two extremal invariant
  distributions: the first one is trivial and is concentrated on the
  empty configuration $\eta \equiv 0$, and the second one, $\nu_{\mu ,R} $,
  is non-trivial, translation invariant, ergodic, and has exponential
  decay of correlations.

  Furthermore, starting from any non-trivial initial condition the
  process $\eta$, conditioned on non-extinction, converges in
  distribution in the weak topology to the non-trivial extremal
  invariant distribution $\nu_{\mu ,R}$.
\end{theorem}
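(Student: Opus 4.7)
The plan is to establish a renormalised block construction --- likely an enhancement of the one used for Theorem~\ref{thm:survival_eta} --- that compares $\eta $ to a highly supercritical $1$-dependent oriented site percolation, and then to deduce the invariant-measure properties and complete convergence by a coupling argument adapted to the non-attractive setting, in the spirit of Bramson--Gray and Durrett--Neuhauser. I would fix mesoscopic scales $L \gg R$ and $T$ and define a \textbf{good block} event $G(x,n)$ --- measurable with respect to $\eta_n|_{B_{L+RT}(x)}$ --- that records the configuration in $B_L(x)$ at time $n+T$ having empirical $\delta_R$-density close to the attractive fixpoint $\theta_\mu = \mu^{-1}\ln \mu $ of $\varphi_\mu $, together with sufficient spatial regularity. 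The heuristic is that for $\mu \in (1,e^2)$ the iteration $w\mapsto \varphi_\mu (w)$ drives densities towards $\theta_\mu $, so for $R$ large the averaging inherent in $\delta_R$ should make this attraction effective at the stochastic level; the construction should yield (i) conditional on $G(x,n)$, the events $G(y,n+T)$ for $y$ in a fixed lattice neighbourhood of $x$ all occur with probability arbitrarily close to $1$, and (ii) the good blocks dominate a supercritical oriented percolation with retention probability arbitrarily close to $1$.

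Granted this, I would construct $\nu_{\mu ,R}$ as a subsequential Ces\`aro weak limit of the laws $\P(\eta_n \in \cdot )$ started from $\eta_0 \equiv 1$. Translation invariance descends from $\eta_0$, invariance under the dynamics from the averaging, and non-triviality from Corollary~\ref{cor:survival_eta_cor} applied to this infinite initial condition. Exponential decay of correlations under $\nu_{\mu ,R}$ --- and hence ergodicity under spatial shifts --- would follow from a \emph{backward exploration} argument: two cylinder events supported on regions at distance $D$ become nearly independent once the percolation ancestries of the good blocks carrying them decouple, which by supercriticality fails with probability only $O(e^{-cD})$.

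For complete convergence I would run two copies $(\eta_n^{(1)})$ and $(\eta_n^{(2)})$ from arbitrary non-trivial initial conditions on the same underlying Poisson randomness. On the joint-survival event the good-block percolations fill $\Z^d$ linearly in time, so that every fixed finite $A \subset \Z^d$ is, for $n$ large, contained in good blocks of both processes whose percolation histories extend arbitrarily far into the past. A local coupling estimate inside a good block --- asserting that, up to an error tending to $0$ in the age of the history, the law of $\eta_n|_A$ depends only on the good-block scaffold --- then forces the conditional laws of $\eta_n^{(i)}|_A$ given survival to converge to a common limit, which by invariance must equal $\nu_{\mu ,R}|_A$. The same coupling applied to any non-trivial invariant distribution shows it coincides with $\nu_{\mu ,R}$, giving the extremal classification claimed in the theorem.

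The chief obstacle is the non-monotonicity of the annihilation rule: one cannot couple $\eta^{(1)}$ and $\eta^{(2)}$ so that one pointwise dominates the other, and the standard attractive coupling arguments are therefore unavailable. I would sidestep this by coupling through the conditional densities $\delta_R(\cdot ;\eta_n^{(i)})$ that drive \eqref{eqn:next_gen_barw}: once these are close enough to $\theta_\mu $ throughout a large box, the resulting Bernoulli marginals for $\eta_{n+1}^{(i)}(x)$ are close in total variation, and a site-by-site maximal coupling produces agreement with probability close to $1$. Propagating such agreement along the supercritical percolation tree while keeping independence errors under control is, I expect, the technical heart of the argument.
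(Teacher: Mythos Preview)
Your outline is close in spirit to the paper's argument, but the paper organises the technical core differently, and that difference matters for making the coupling step go through.

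The paper does \emph{not} define good blocks for a single process and then attempt to couple two such processes afterwards. Instead it first proves Theorem~\ref{thm:coupling} (the coupling theorem), and for this it defines well-started and good blocks directly for the \emph{pair} $(\eta^{(1)},\eta^{(2)})$, built from the same uniforms $U(x,n)$ via \eqref{eqn:etacouplconstr}. A block is well-started if both densities lie in a tube around $\theta_\mu$ (profiles $\zeta_k^\pm$) \emph{and} the two configurations already agree on a central sub-box; it is good if this agreement has expanded after $T_\block$ steps. The key computation (Lemma~\ref{lem:welltogood}) iterates the contraction of $\varphi_\mu$ near $\theta_\mu$ exactly $T_\block \sim c_\ttime \log R$ times to bound the probability of any remaining disagreement by $\kappa(\mu,\varepsilon)^{T_\block}$ times a polynomial in $R$, which vanishes for $c_\ttime > -(d+1)/\log\kappa$. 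This yields an oriented percolation of good \emph{pair}-blocks, and then Durrett's ``no $C_0$-exposed sites in a cone'' lemma (from \cite{durrett1992multicolor}, restated as Lemma~\ref{lem:birkner_survival}) converts this into almost-sure agreement on a linearly growing cone. Theorem~\ref{thm:complete_convergence} is then a short deduction from Theorem~\ref{thm:coupling}.

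Your version --- separate single-process good blocks, then a site-by-site maximal coupling, then ``propagating agreement along the percolation tree while keeping errors under control'' --- is not wrong as a heuristic (indeed the shared-$U$ coupling \emph{is} the site-wise maximal coupling of the Bernoulli marginals), but the phrase ``the law of $\eta_n|_A$ depends only on the good-block scaffold'' is not the right statement: the law is not determined by the scaffold, only the coupling discrepancy is controlled. More substantively, without building agreement into the block definition you do not get a clean percolation comparison for the \emph{coupled} region, and you are left with exactly the accumulation-of-errors problem you flag at the end. The paper's choice to encode agreement in the block and then invoke the exposed-sites lemma is precisely what turns the local contraction estimate into a global almost-sure coupling without having to track error propagation by hand.

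Minor points: the paper constructs $\nu_{\mu,R}$ as a Ces\`aro limit from a product-Bernoulli initial law rather than $\eta_0\equiv 1$ (so that almost-sure survival is immediate from infinitely many independent well-started blocks), and it does not prove exponential decay of correlations in detail but cites \cite{DepperschmidtPhD} for the standard deduction from the supercritical oriented-percolation picture.
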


The driving result behind Theorem~\ref{thm:complete_convergence} is the
following strong coupling property of the system $\eta$, which is of
independent interest.

\begin{theorem}
  \label{thm:coupling}
  Assume that $\mu\in(1,e^2) $ and $R \ge R'_\mu $ satisfy the assumptions of
  Theorem~\ref{thm:complete_convergence}. Then there exists a speed
  $a>0$ (depending on $R, \mu,d$) such that for every pair of (possibly random) initial
  conditions $\eta_0^{(1)}, \eta_0^{(2)} \in \{0,1\}^{\Z^d}$ there exists
  a coupling of the processes $(\eta^{(i)}_n)_{n \in \N_0}$, $i=1,2$,
  with the following property: for each $x \in \Z^d$ there is an
  $\N_0 \cup \{\infty\}$-valued random variable $T_x^{\mathrm{coupl}}$
  (whose exact law will in general depend on the initial conditions and
    on $x$) such that
  $\{ \eta^{(i)}_n \not\equiv 0 \text{ for all } n \in \N, \, i=1,2\}
  \subseteq\{ T_x^{\mathrm{coupl}}<\infty \} $
  a.s.\ and
  \begin{align*}
    \eta^{(1)}_n(y) = \eta^{(2)}_n(y)
    \quad \text{for all } n > T_x^{\mathrm{coupl}} \text{ and }
    \norm{y -x} \le a(n-T_x^{\mathrm{coupl}}).
  \end{align*}
\end{theorem}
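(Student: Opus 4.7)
The plan is to establish Theorem~\ref{thm:coupling} by combining a graphical (common-randomness) coupling with a block construction of Bezuidenhout--Grimmett type, exploiting that on $\mu \in (1,e^2)$ the nontrivial fixpoint $\theta_\mu=\mu^{-1}\ln\mu $ of $\varphi_\mu $ is attractive with
\[
  \lvert \varphi_\mu'(\theta_\mu)\rvert \;=\; \lvert 1-\ln\mu\rvert \;<\;1.
\]
First I would attach to each space--time site $(x,n+1)$ an independent uniform $U_{n+1}(x)\in[0,1]$ and set $\eta^{(i)}_{n+1}(x)=\ind\{U_{n+1}(x)\le \varphi_\mu(\delta_R(x;\eta^{(i)}_n))\}$ for $i=1,2$. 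The conditional probability of disagreement at $(x,n+1)$ is then at most $\lvert \varphi_\mu(\delta_R(x;\eta^{(1)}_n))-\varphi_\mu(\delta_R(x;\eta^{(2)}_n))\rvert $, and in a regime where both densities sit near $\theta_\mu $ this is bounded by a Lipschitz constant strictly less than $1$ times the local density discrepancy, which is itself a $V_R^{-d}$-weighted count of disagreements over $B_R(x)$.

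Second, I would reuse the block construction underlying Theorem~\ref{thm:survival_eta} and Theorem~\ref{thm:complete_convergence} to force both coupled processes into a joint good phase. Fix a spatial scale $L$ and a time scale $\tau $, and call a configuration \emph{$\theta_\mu $-good} on a box $B$ if its $\delta_R$-density is within a small $\varepsilon >0$ of $\theta_\mu $ throughout $B$. For $R$ sufficiently large, the event that a $\theta_\mu $-good block reproduces $\theta_\mu $-good blocks at each spatial neighbour in the next time slab has conditional probability $\ge 1-\delta $. A restart argument of Bezuidenhout--Grimmett type, applied independently to $\eta^{(1)}$ and $\eta^{(2)}$, then shows that on the event of joint survival, for every $x\in\Z^d$ there is a.s.\ a finite time $T^\star_x$ beyond which both processes are $\theta_\mu $-good in an outward-spreading wedge of blocks around $x$. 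On the complement of joint survival I simply set $T^{\mathrm{coupl}}_x=\infty $.

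Third, inside this joint good wedge I would run the actual coupling argument. The Lipschitz bound from the graphical coupling, together with concentration of the random disagreement count around its conditional mean (a Chernoff or second-moment bound using the $V_R^{-d}$-averaging in $\delta_R $), implies that the fraction of disagreement sites inside a $\theta_\mu $-good block is contracted by a factor strictly less than $1$ per time step with conditional probability $\ge 1-\delta $. Iterating this on the block scale, and comparing the resulting coupled block process with a supercritical oriented percolation (now driven by the event \emph{``both copies are $\theta_\mu $-good and agree on the inner sub-block''}), produces an agreement region that expands outwards at a deterministic positive speed $a>0$. Defining $T^{\mathrm{coupl}}_x$ to be the first time $n\ge T^\star_x$ at which the expanding agreement region covers $\{x\}$, monotonicity of the cone structure yields $\eta^{(1)}_n(y)=\eta^{(2)}_n(y)$ throughout $\norm{y-x}\le a(n-T^{\mathrm{coupl}}_x)$ for all subsequent $n$.

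The main obstacle is the non-monotonicity of the BARW, which rules out the particle-matching couplings used for the contact process and similar attractive systems: a single isolated discrepancy can, through the annihilation mechanism, switch particles on or off at many neighbouring sites one step later. The argument must therefore work at the level of block-averaged densities rather than individual particles, and lean on the contraction $\lvert \varphi_\mu'(\theta_\mu)\rvert <1$ combined with the strong concentration of $\delta_R $ available for large $R$. A related subtlety is that the joint entry into the good phase must be produced by the same block restart scheme used in Theorem~\ref{thm:complete_convergence}; one has to check that this scheme is robust enough to be run for the pair $(\eta^{(1)},\eta^{(2)})$ under the coupled randomness, so that the block oriented percolation controlling the coupling can be superimposed on top of the one controlling survival.
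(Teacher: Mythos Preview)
Your proposal follows essentially the same route as the paper: the common-uniform graphical coupling, the contraction $|\varphi_\mu'(\theta_\mu)|<1$, a block event combining two-sided density control near $\theta_\mu$ for both copies with agreement on a central sub-box, and comparison with supercritical oriented percolation on the block lattice. The paper merges your two stages into one: the \emph{well-started} condition (Definition~\ref{def:well_started}) already requires both density control and central agreement for the pair, and Lemma~\ref{lem:welltogood} shows directly---by iterating the Lipschitz bound $T_\block\sim\log R$ times inside a ``density pyramid'' and taking a union bound---that well-started implies good with probability $1-q(R,\mu)\to1$. The one step you leave implicit is the passage from ``good blocks have high density'' to ``agreement fills a full cone'': dry blocks still occur inside the supercritical cluster, and a discrepancy could a priori survive by threading through them. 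The paper closes this by tracing any disagreement at $(z,s)$ backwards along a path of disagreement sites to time $0$, observing that on the block scale such a path avoids every good block and is therefore $C_0$-\emph{exposed}; the oriented-percolation input (Lemma~\ref{lem:birkner_survival}, from Durrett's multicolor argument) then rules out exposed sites inside a space--time cone above a finite random time $\tau$, and one takes $T_x^{\mathrm{coupl}}=\tau$. Your ``monotonicity of the cone structure'' is presumably shorthand for exactly this backward-tracing step, but it is the place where non-monotonicity bites and should be made explicit.
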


\begin{remark}
  From the proof of Theorem~\ref{thm:coupling} it follows that,
  when starting from a finite (or a half-space) initial condition, the
  system $\eta$, given that it survives, will expand into the ``empty
  territory'' at least at some (small) linear speed.  Furthermore,
  simple comparison arguments with supercritical branching random walks
  show that this expansion cannot occur faster than linearly.
  However, identifying an actual linear speed or even an asymptotic
  profile of the expanding population near its tip remains
  a topic for future research.
\end{remark}

\begin{remark}
  Denote by
  $\overline{\delta}_{\mu,R} = \E_{\nu_{\mu,R}}[\eta_0(0)] \in (0,1)$ the
  particle density of the non-trivial invariant measure $\nu_{\mu,R}$
  from Theorem~\ref{thm:complete_convergence}, where $\mu \in (1,e^2)$
  and $R \ge R'_\mu$. By ergodicity, we have almost surely when
  $\eta_0 \sim \nu_{\mu,R}$
  \begin{equation}
    \label{eq:nu.dens}
    \lim_{N \rightarrow \infty} \frac{1}{N} \sum_{n=1}^N \eta_n(x)
    = \lim_{N \rightarrow \infty} \frac{1}{N} \sum_{n=1}^N \delta_R(x; \eta_n)
    = \overline{\delta}_{\mu,R} \quad \text{for every } x \in \Z^d.
  \end{equation}
  By the coupling property from Theorem~\ref{thm:coupling}, Equation~\eqref{eq:nu.dens} holds in fact a.s.\ for any initial condition given
  that the system survives.

  Furthermore, for $1 < \mu < e^2$, inspection of the proof of
  Theorem~\ref{thm:coupling} shows that for every $\varepsilon \in (0,1)$
  there exists $R'_{\mu ,\varepsilon } < \infty$ such that if
  $R \ge R'_{\mu ,\varepsilon }$ then, conditionally on non-extinction,
  \begin{equation}
    \label{eq:nu.dens.blocks}
    \liminf_{N \rightarrow \infty} \frac{1}{N} \sum_{n=1}^N
    \ind_{ \{ \abs{ \delta_R(x; \eta_n)- \theta_{\mu}}  < \varepsilon \} }
    \ge 1- \varepsilon \quad \text{almost surely for every } x \in \Z^d,
  \end{equation}
  where we recall that $\theta_{\mu}$ is the fixpoint of $\varphi_{\mu}$
  (on ``good'' blocks, the particle density is close to $\theta_\mu$,
    see Definition~\ref{def:good_block} below, and good blocks are shown
    to occur with high space-time density).

  Note that \eqref{eq:nu.dens} and \eqref{eq:nu.dens.blocks} together
  imply that $|\overline{\delta}_{\mu,R}-\theta_\mu| \le 2 \varepsilon$
  for $R \ge R'_{\mu ,\varepsilon }$. This corroborates the idea that for
  large $R$, the particle system's behaviour is close to that of the
  corresponding deterministic coupled map lattice, which we discuss in
  Section~\ref{sect:CML} below, and which, as shown in
  Proposition~\ref{prop:cml}, converges to the configuration which is
  constant and equal to~$\theta_\mu$.
\end{remark}
\begin{remark}\label{rmk:generalpca}
If we take
\eqref{eqn:next_gen_barw} as the definition of the model, the
particular form of the function $\varphi_\mu $ used there does not
play a strong role. Our techniques will continue to work, if we
replace $\varphi_\mu $ by another function of a ``similar shape''. In
fact, for survival we only need that $\varphi_\mu : [0,1] \to [0,1]$
is continuously differentiable and strictly positive on $(0,1]$ with
$0$ as an unstable fixpoint. For the proof of the convergence
result, we also need that there is a  unique attracting fixpoint
$\theta_\mu \neq 0$.
\end{remark}

\subsection{Motivations and possible generalisations}
\label{ssec:generalisations}

The construction of our model might seem very rigid. Therefore, we discuss
here the role of the different assumptions and their possible
generalisations.

In the biological literature a wide range of mechanisms of competition has been considered to match experiments on real populations. The terms ``scramble'' and ``contest'', first introduced in \cite{Nicholson1954AnOO}, have become traditional in the literature to distinguish between two different kinds on intraspecific competition. In \emph{contest} competition, an individual is successful if she gets the amount of resources required for survival or reproduction. The logistic branching random walk studied for example in~\cite{birkner2007survival} is a possible mathematical model for a population regulated by contest competition. \emph{Scramble} competition instead involves the exactly equal partitioning of the resources among individuals on each site, such that there is an abrupt change from survival to complete extinction when there is insufficient resource to maintain any individual. Scramble and contest competition are considered two extreme cases and normally some element of both is likely to be observed. However, populations regulated only by scramble competition are still of interest in the biological community, for example in~\cite{brannstrom2005coupled} a version of our model with deterministic branching is studied through a stochastic approximation to a coupled map lattice, in a slightly different setting than ours, but showing similar bifurcation diagrams. 

From a mathematical perspective, if we look at the branching annihilating random walk as a model for a population regulated by scramble competition, we could weaken our hypothesis in several directions. The assumption that the particles jump distribution is uniform over a box
of length $V_R$ is non-essential and is made only for convenience of the
notation. It can in principle be replaced by an arbitrary (centred)
\emph{finite range} transition kernel, and the proofs can be adapted by
suitably replacing the particle density \eqref{eqn:density} by the
convolution of this kernel with $\eta $. The assumption that the number of offspring of a single particle has a
Poisson distribution is more important, as it allows for the essential
representation~\eqref{eqn:next_gen_barw}, and also yields the
conditional independence of $(\eta_{n+1}(x):x\in \mathbb Z^d)$ given
$\eta_n$. Replacing the Poisson distribution would thus require
non-trivial modifications to our proofs.

On the other hand, if we take~\eqref{eqn:next_gen_barw} as a definition of the branching annihilating random walk we obtain an equivalent description of the model as a probabilistic cellular automaton. Cellular automata are extensively considered as models to understand biological self-organisation; for an excellent overview of their wide applicability see for example~\cite{ermentrout1993cellular}. This survey shows how deterministic automata (and probabilistic versions of them), where each state at the next time step is determined solely from earlier states of the cell and its neighbours, have been employed as models for waves in excitable and oscillatory media, predator--prey models and spatial pattern formation. 
In particular, our model is strongly connected with probabilistic cellular automata modelling pattern formation, in which the state of each site is updated by spatially averaging over a neighbourhood and applying a Heaviside function.
Furthermore, a generalisation of this mechanism to weighted averaging has been studied in the context of neural networks as a modification of the Hopfield model for cortical electrical activity, describing the dynamics of parallel (rather than asyncronous) switching from excitatory and inhibitory cells. Going back to our definition~\eqref{eqn:next_gen_barw}, as we pointed out in Remark~\ref{rmk:generalpca} our results still hold if we replace $\varphi_{\mu}$ with a wider class of functions. For further inspiration to future work in connection with probabilistic cellular automata we refer to the rich variety of models surveyed in~\cite{ermentrout1993cellular}.

Finally, both in the interpretation of the branching annihilating random walk as either a regulated population or a probabilistic cellular automaton, the ``hard'' annihilation constraint of at most one particle per site could be relaxed by replacing the definition \eqref{eqn:next_gen_barw_0} of $\eta$ by
$\eta_{n+1}(x) = N_{n+1}(x) \ind_{\{ N_{n+1}(x) \le k \}}$ for some $k \in \N$. Since this modification retains the conditional Poisson and independence properties of the $N_{n}(x)$'s and the sums of truncated Poisson random variables have good concentration properties, we are optimistic that our proofs could be adapted to this scenario with some additional work.

\subsection{Discussion of related results}
\label{sect:litdiscussion}

One of the first models of branching annihilating random walks was
introduced and studied in \cite{bramson1985survival}.
The authors considered a particle system on $\Z$, in which sites can be occupied
as the result of the following mechanisms: particles can either
\textit{jump} to one of the two neighbouring sites at a certain rate or
\textit{branch} into two by giving birth to a new particle on one of the
neighbouring sites. On top of this, particles behave independently except
when they land on a site which is already occupied, in which case both
particles disappear, \textit{annihilate}. The authors show that, starting
from any finite number of particles, the system survives with positive
probability if the jumping rate is small compared to the branching rate
and that the population dies out almost surely if the jumping rate is
sufficiently high. This process is an interacting particle system in the
sense of \cite{liggett1985interacting, liggett1999stochastic}  but it is
not attractive. The authors use contour arguments which rely on the
one-dimensional model they chose.

Very general interacting particle systems on $\Z$ are considered in~\cite{Sudbury2000survival}, where pairwise interactions among neighbours can produce annihilation, birth, coalescence, and exclusion and single individuals can die. Conditions on the rates which ensure positive probability of survival are given by making use of self-duality (which has been proved by the same author in~\cite{Sudbury2000duality}) and supermartingale arguments. 
In~\cite{Bramson1991Annihilating} instead, processes on $\Z^d$ with nearest-neighbour birth at rate 1, annihilation and spontaneous death at rate $\delta$ have been considered. An extinction result for the branching annihilating process started from one particle at the origin is obtained by comparison with the contact process. On the other hand, survival when $\delta$ is small is proved through comparison with oriented percolation. 

In cases where survival can be established, natural questions concern the existence of stationary distributions and weak convergence. 
In~\cite{Sudbury1990branching} a version of the model introduced in~\cite{bramson1985survival} is considered in $\Z^d$ in the case of no random walk, that is, when particles can only move as a consequence of the branching and there is no underlying motion. It is shown that the product measure with rate $1/2$ is the only non-empty limiting distribution. In the case of a double branching and annihilating process on $\Z$ (where each particle can place offspring on both of its neighbouring sites), a richer variety of limiting measures is exhibited. 
In~\cite{Bramson1991Annihilating} the authors prove that when $\delta=0$ the product measure with density $1/2$ is stationary and is the limiting measure, thus obtaining independently the same result proved in~\cite{Sudbury1990branching}. Furthermore it is shown that for any $\delta$ there are at most two extremal translation invariant stationary distributions, and if $\delta$ is small there exists a non-trivial stationary distribution. 

Another question of interest is whether branching processes with annihilation satisfy duality relations. In~\cite{Athreya2012systems} processes in continuous time are considered, in which particles can annihilate, branch, coalesce or die. The authors show that annihilation does not play a key role in a duality relation: the process with annihilation is dual to a system of interacting Wright-Fisher diffusions, and this result holds also if annihilation is suppressed (but in the case of annihilation the duality function is different and more complicated). It would be highly interesting to find a useful duality relation for our model as well.

Versions of branching annihilating processes in discrete time are generally more difficult to deal with, since continuous time implies that changes in the configuration can only occur one site at a time, sequentially as opposed to in parallel. A discrete-time analogous of~\cite{bramson1985survival} has been considered in~\cite{Alili2001surviving} for a model on $\Z$, where particles at each time move with probability $1-\varepsilon$ or branch with probability $\varepsilon$, with the rule that two particles occupying the same site will annihilate. The authors show that, if the branching probability is small enough, for any finite initial configuration of particles the probability $p(t)$ that at least one site is occupied at time $t$ decays exponentially fast in $t$. 

A branching
annihilating random walk on the complete graph is studied in~\cite{perl2015extinction}. This process evolves in discrete
time, the number of offspring is Poisson distributed with mean $\mu$
and each one of them independently moves to one of the neighbouring sites
of their parent. This corresponds to our model on the complete graph. Since on a finite graph there is always a positive
probability of total annihilation in one step, the system eventually dies
out at some finite time. The authors prove that if $\mu >1$ the process on the
complete graph with $N$ vertices has an exponentially long lifetime in $N$
and that its last excursion from the ``equilibrium value'' $\theta_\mu N$ before it reaches
the zero state is logarithmic in $N$.

Besides systems where particles can annihilate, recent research
directions have also been focusing on spatial branching systems in which
the interaction among particles is regulated by a competition kernel
which can reduce the average reproductive success of an individual at a
given site. In this case, rather than annihilating particles in areas
with high particle density, the existing particles will produce fewer
offspring. Spatial models with local competition are for example
investigated in \cite{
Etheridge2004, 
birkner2007survival, 
blath2007coexistence,
Kondratievuniverse, 
Mueller2015,
maillard2021branching}. 
The two papers most related to our present work
are \cite{birkner2007survival, maillard2021branching}.

In~\cite{birkner2007survival} a discrete
time branching system with a finite range (and thus \textit{local})
competition kernel is considered. The authors show that the system survives with positive
probability if the competition term is small enough and obtain complete
convergence of the system to a non-trivial equilibrium for some choices
of the model parameters. The strategy used in \cite{birkner2007survival}
to prove survival is building a comparison with an oriented percolation
model. We will use similar ideas to show survival for our branching
annihilating random walk, as well as complete convergence.

A more recent work~\cite{maillard2021branching} considers a process in continuous time and
non-local competition kernels, where the range of interaction can be
arbitrary, even infinite. Using a contour argument, the authors prove that in
the low competition regime the system survives globally. In the same
regime, they also provide a shape theorem, showing that the asymptotic
spreading speed of the population is the same as in the branching random
walk without competition.

Since we work in discrete time, our model is not an interacting particle
system in the sense of \cite{liggett1985interacting,
  liggett1999stochastic} but rather a probabilistic cellular automaton, as discussed above.
We refer to \cite{mairesse2014around} for a survey on probabilistic
cellular automata. Ergodicity and complete convergence for probabilistic
cellular automata is a notoriously difficult topic where many
proof techniques are model-dependent. For attractive systems there are
still some general tools as monotonicity and subadditivity, see
\cite{hammersley1974postulates}. We refer to \cite{FernandezLouisNardi}
for a collection of recent results.

\subsection{Auxiliary coupled map lattice}
\label{sect:CML}

Our work also raises questions about coupled map lattices which are
deterministic versions of probabilistic cellular automata, see
\eqref{eqn:next_gen_barw}, and which, in our examinations of the BARW,
serve as an intuitional guide for the proofs of the survival and the
complete convergence. This coupled map lattice is a deterministic
$[0,e^{-1}]^{\mathbb Z^d}$-valued process  $\Xi_n$ (note that
  $\max_{w\ge 0} \varphi_\mu (w) = e^{-1}$) defined, given any initial
condition $\Xi_0$, by the iteration of
\begin{equation}
  \label{eqn:cmlattice}
  \Xi_{n+1}(x) := \varphi_\mu (\delta_R (x; \Xi_n)).
\end{equation}
At least for $R$ large, locally, the dynamics of this process is a good
approximation for the dynamics of the ``density profile''
$\delta_R (\cdot; \eta_n)$ of $\eta $, as can be heuristically seen from
\eqref{eqn:next_gen_barw} and the law of large numbers.

We will prove and exploit the fact that in the regime when $\varphi_\mu $
has the unique attractive fixpoint $\theta_\mu $, that is for
$\mu \in (1,e^2)$, when starting from a non-zero initial condition, the
coupled map lattice converges locally to $\theta_\mu $, and the region where it is
close to this value expands.

\begin{proposition}
  \label{prop:cml}
  Let $\mu \in (1, e^2)$ and assume that $\Xi_0(0)>0$. Then
  \begin{equation*}
    \lim_{n\to\infty} \Xi_n(z) = \theta_{\mu}\qquad \text{for all }
    z\in \mathbb Z^d,
  \end{equation*}
  and for every $\varepsilon >0$ there is a speed
  $a = a(\mu,\varepsilon,\Xi_0(0)  )>0$ such that
  $\Xi_n(x)\in (\theta_\mu-\varepsilon ,\theta_\mu+\varepsilon )$ for all
  $\abs x \le a n$.
\end{proposition}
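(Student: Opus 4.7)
The plan is to split the argument into a \emph{seeding} phase, which brings a bounded central region into a forward-invariant neighbourhood of $\theta_\mu$, and an \emph{expansion} phase, which grows this good region at linear speed.

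First, I would note that positivity propagates at speed~$R$: by induction on $n$, $\Xi_n(y)>0$ for all $\|y\|\le nR$, since whenever some point of $B_R(x)$ has positive value then $\delta_R(x;\Xi_n)>0$ and hence $\Xi_{n+1}(x)=\varphi_\mu(\delta_R(x;\Xi_n))>0$. Next, since $\mu\in(1,e^2)$ gives $\varphi'_\mu(\theta_\mu)=1-\ln\mu\in(-1,1)$, I can fix $\varepsilon_0\in(0,\varepsilon)$ and a Lipschitz constant $\lambda<1$ such that $\varphi_\mu$ is $\lambda$-Lipschitz on $I:=[\theta_\mu-\varepsilon_0,\theta_\mu+\varepsilon_0]$, and in particular $\varphi_\mu(I)\subseteq I$.

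For the seeding phase, I would invoke the fact that for every $v\in(0,e^{-1}]$ the one-point iteration $\varphi_\mu^n(v)\to\theta_\mu$ as $n\to\infty$. This can be verified for $\mu\in(1,e^2)$ by using monotonicity of $\varphi_\mu$ on $[0,1/\mu]$ for $\mu\le e$ and by a direct analysis of the second iterate $\varphi_\mu\circ\varphi_\mu$ for $\mu\in(e,e^2)$; the key points are that $0$ is repulsive, $\theta_\mu$ is the unique other fixpoint, and $|\varphi'_\mu(\theta_\mu)|<1$. Combining this with the propagation of positivity, I would show that after some finite time $T_0$, $\Xi_{T_0}(y)\in I$ for all $y$ in a central ball $B_{r_0}(0)$, with $r_0\ge R$. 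The intuition is that once positivity has filled a sufficiently large ball, the average $\delta_R$ acts essentially as a slowly varying constant, so the local dynamics mimics the one-point map $\varphi_\mu$ and drives the values into $I$.

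For the expansion phase, the core observation is an $I$-invariance: since $I$ is a convex interval and $\varphi_\mu(I)\subseteq I$, whenever $\Xi_n(y)\in I$ for all $y\in B_s(0)$, one has $\delta_R(x;\Xi_n)\in I$ for all $x\in B_{s-R}(0)$, and therefore $\Xi_{n+1}(x)\in I$ on $B_{s-R}(0)$. Alone, this gives only shrinkage by $R$ per step; however, positivity has already spread far beyond $B_s(0)$, and a refined seeding argument in the annulus adjacent to $B_s(0)$ shows that these merely-positive values enter $I$ within $O(1)$ additional steps. A block argument at a scale much larger than $R$ then turns this balance into a net outward growth of the good region at linear speed $a>0$, giving the second assertion. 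The first assertion $\Xi_n(z)\to\theta_\mu$ for each fixed $z$ follows by taking $n$ large enough that $\|z\|\le an$.

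The main obstacle is precisely this expansion step: the $I$-invariance loses $R$ per iteration, so obtaining a strictly positive linear speed $a$ requires quantitatively showing that positive values in the annular frontier are absorbed into $I$ faster than $I$-values leak outward. This is handled by choosing a block scale $L\gg R$ so that many contractive iterations of $\varphi_\mu$ occur between each outward step, and by combining the Lipschitz contraction of $\varphi_\mu$ near $\theta_\mu$ with estimates on $\delta_R$-averages of non-uniform positive profiles. Since the proposition asks only for \emph{some} $a>0$ and not its optimal value, rough quantitative estimates suffice to close the argument.
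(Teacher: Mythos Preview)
Your two-phase plan (seeding, then linear expansion) is the right shape, but the seeding step has a genuine gap. Positivity propagation at speed $R$ is correct but far too weak: at a freshly reached site the value is $\varphi_\mu$ of a density of order $V_R^{-d}$ times a single neighbouring value, so values near the frontier of the positive region can be arbitrarily small, and the one-point iteration $\varphi_\mu^n(v)\to\theta_\mu$ from tiny $v$ needs time of order $\log(1/v)/\log\mu$, not $O(1)$. Consequently the assertion that ``the average $\delta_R$ acts essentially as a slowly varying constant, so the local dynamics mimics the one-point map'' fails precisely where you need it, and the same objection undermines the expansion-phase claim that merely-positive annular values ``enter $I$ within $O(1)$ additional steps''. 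Without a uniform quantitative lower bound on $\Xi_n$ over a growing region, your block argument has no input to work from.

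The paper supplies the missing ingredient by minorising $\varphi_\mu$ with the concave function $\psi(w)=(\widetilde a\,w)\wedge b$ for some $\widetilde a>1$, $b>0$. Concavity plus Jensen's inequality converts the averaging step into a random-walk convolution, giving the explicit lower bound
\[
\Xi_n(z)\ \ge\ \sum_{y\in\Z^d} p^{(n)}(z,y)\,\big[(\widetilde a^{\,n}\Xi_0(y))\wedge b\big],
\]
where $p^{(n)}$ is the $n$-step kernel of the uniform walk on $B_R(0)$. A local central limit theorem then yields $\Xi_{n_1}\ge c\,n_1^{-d/2}b$ uniformly on a ball of radius about $\sqrt{n_1}$; a second application of the same bound, now started from this uniformly positive configuration, lifts the values above $b$ on a large ball after only $O(\log n_1)$ further steps. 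From there the paper uses the nested intervals $[\alpha_m,\beta_m]$ of Lemma~\ref{lem:alpha_sequences} (with $\alpha_1<b/2$ and $\beta_1>1/e$) rather than a single invariant $I$: since $\varphi_\mu([\alpha_m,\beta_m])\subseteq(\alpha_{m+1},\beta_{m+1})$, in $m^*$ iterations the values land in $(\theta_\mu-\varepsilon,\theta_\mu+\varepsilon)$ while the spatial radius shrinks by only $m^* R$.
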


We believe that for localised or half-space initial conditions, the
process $\Xi $ will approach a ``travelling wave''. While there is a rich
literature addressing travelling waves, we were not able to find results
which literally apply in our context, in particular since our model has
discrete time and space. We thus prove the above (weaker and non-optimal)
proposition by rather bare hand arguments, which involve a construction
of a ``travelling wave sub-solution'', see Section~\ref{ss:profiles}
below. Travelling waves in the context of PDEs have been widely studied,
also with a view of  biological applications. The existence of travelling waves
has also been considered quite extensively in the context of discrete
time, \emph{continuous space} models, see
e.g.~\cite{weinberger1978asymptotic, LiLewisWeimberger2009, Kot92,
  KotSchaffer1986}. In particular, existence of such travelling waves has been shown in situations where $\varphi_\mu$ in
\eqref{eqn:cmlattice} is replaced by an increasing (and hence monotone)
function \cite{hammersley1974postulates,weinberger1978asymptotic}.

The regime $\mu > e^2$ is also very interesting. In this case the
iteration of $\varphi_\mu $ does not converge to a single point but to a
stable orbit, which as $\mu$ increases beyond $e^2$  will have an increasing number of elements. We refer to~\cite[Chapter 9]{thompson2002nonlinear} for a description of bifurcation diagrams and chaos for one-dimensional maps.
In this regime, we are not aware of results in the
literature covering the coupled map lattice model. But even given such
results, the behaviour of the stochastic system might be different and
more difficult to control than in the stable-fixpoint case treated here.
We leave these questions for future work.

\section{Preliminary results and tools}
\label{sec:preliminary}

In this section we collect some preliminary results that will be used
throughout the paper.

\subsection{A general coupling construction}
\label{ss:coupling}

We will frequently make use of the following construction allowing to define the
process $\eta $ for all initial conditions simultaneously and also
allowing to compare $\eta $ with other particle systems, in particular with
monotone ones.

Let $U(x,n)$, $x\in \mathbb Z^d$, $n\in  \mathbb N_0$, be a
collection of i.i.d.~uniform random variables on $[0,1]$. Recall the
definition of the function $\varphi_\mu $ from \eqref{eqn:varphi}, and
let $\psi : [0,1] \to \R_+$ be any non-decreasing function satisfying
\begin{equation}
  \label{eqn:psiphirelation}
  \psi(w) \le \varphi_\mu(w) \qquad
  \text{for all }w \in [0, 1] \cap V_R^{-d} \Z,
\end{equation}
that is, for all possible values of the density $\delta_R(\cdot;\eta_n )$.
Then, for any initial conditions $\eta_0$, $\widetilde \eta_0$,
define, recursively for $n\ge 0$,
\begin{align}
  \label{eqn:etacouplconstr}
  \eta_{n+1}(x) &= \ind_{\{ U(x,n+1)
      \le \varphi_{\mu}(\delta_R(x;\eta_n))\}},
  \\
  \label{eqn:etatilde}
  \widetilde{\eta}_{n+1}(x) &= \ind_{\{ U(x,n+1)
      \le \psi(\delta_R(x;\widetilde{\eta}_n))\}}.
\end{align}
The construction \eqref{eqn:etacouplconstr} of $\eta $ is morally the
analogue of the common graphical construction of an interacting particle
system in our context, and can be viewed as a stochastic flow on the
configuration space $\{0,1\}^{\Z^d}$. The next lemma gives its main
properties.

\begin{lemma}[General coupling construction]
  \label{lem:coupling_lemma}
  \begin{enumerate}[(a)]
    \item The process $\eta$ defined by \eqref{eqn:etacouplconstr} has
    the law of the branching-annihilating random walk with parameters
    $\mu $ and $R$ and initial condition $\eta_0$.

    \item If $\widetilde{\eta}_0(x) \le \eta_0(x)$ for all
    $x\in \mathbb Z^d$, then
    $\widetilde{\eta}_n(x) \le \eta_n(x)$ for all $n \in \N$ and
    $x\in \mathbb Z^d$.
  \end{enumerate}
\end{lemma}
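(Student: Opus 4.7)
\textbf{Plan for the proof of Lemma~\ref{lem:coupling_lemma}.}

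For part (a), the strategy is simply to verify that the recursion \eqref{eqn:etacouplconstr} reproduces the conditional distribution prescribed in \eqref{eqn:next_gen_barw}. Conditionally on $\eta_n$, the quantity $\delta_R(x; \eta_n)$ is measurable with respect to $\eta_n$, the uniform variables $\{U(x, n+1) : x \in \Z^d\}$ are independent of $\eta_n$ and mutually independent, so the indicator $\ind_{\{U(x,n+1)\le \varphi_\mu(\delta_R(x;\eta_n))\}}$ is a Bernoulli random variable with parameter $\varphi_\mu(\delta_R(x;\eta_n))$, and these Bernoullis are conditionally independent across $x$. This matches \eqref{eqn:next_gen_barw} exactly, and an easy induction on $n$ gives that the joint law of $(\eta_n)_{n \ge 0}$ agrees with the BARW.

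For part (b), I would argue by induction on $n$, with the base case $\widetilde\eta_0 \le \eta_0$ being assumed. Suppose $\widetilde\eta_n(y) \le \eta_n(y)$ for all $y \in \Z^d$. Summing over $y \in B_R(x)$ and dividing by $V_R^d$ yields
\begin{equation*}
  \delta_R(x; \widetilde\eta_n) \le \delta_R(x; \eta_n),
\end{equation*}
and both sides lie in $[0,1] \cap V_R^{-d}\Z$. Using that $\psi$ is non-decreasing and then applying the standing hypothesis \eqref{eqn:psiphirelation} at the point $\delta_R(x;\eta_n)$, I obtain the chain of inequalities
\begin{equation*}
  \psi\bigl(\delta_R(x; \widetilde\eta_n)\bigr) \le \psi\bigl(\delta_R(x; \eta_n)\bigr) \le \varphi_\mu\bigl(\delta_R(x; \eta_n)\bigr).
\end{equation*}
Since the \emph{same} uniform variable $U(x,n+1)$ is used in both recursions \eqref{eqn:etacouplconstr} and \eqref{eqn:etatilde}, the event $\{\widetilde\eta_{n+1}(x) = 1\} = \{U(x,n+1) \le \psi(\delta_R(x;\widetilde\eta_n))\}$ is contained in $\{U(x,n+1) \le \varphi_\mu(\delta_R(x;\eta_n))\} = \{\eta_{n+1}(x) = 1\}$, giving $\widetilde\eta_{n+1}(x) \le \eta_{n+1}(x)$. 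This closes the induction.

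Neither step poses a serious obstacle; the content of the lemma is really the correct identification of what assumption on $\psi$ makes the common-uniform coupling preserve a coordinatewise ordering. The subtle point worth emphasising in writing up is precisely the role of monotonicity of $\psi$: the function $\varphi_\mu$ itself is \emph{not} monotone on $[0,1]$, so one cannot expect a naive attractiveness statement for $\eta$ against itself, and the inequality \eqref{eqn:psiphirelation} only needs to hold on the discrete set of admissible density values, which is exactly what the induction step uses.
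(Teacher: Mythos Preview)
Your proof is correct and follows essentially the same approach as the paper: part (a) is the direct verification that \eqref{eqn:etacouplconstr} reproduces \eqref{eqn:next_gen_barw}, and part (b) is the same induction using the chain $\psi(\delta_R(x;\widetilde\eta_n)) \le \psi(\delta_R(x;\eta_n)) \le \varphi_\mu(\delta_R(x;\eta_n))$ together with the shared uniform variables. Your write-up is in fact slightly more explicit than the paper's about where \eqref{eqn:psiphirelation} is applied and why the densities lie in the admissible discrete set, which is a welcome clarification.
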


\begin{proof}
  Part (a) follows immediately from \eqref{eqn:next_gen_barw}. To see
  part (b) assume $\widetilde\eta_{1}(x)=1$ for some $x \in \Z^d$. Then,
  by construction  $U(x,1) \le \psi( \delta_R(x; \widetilde\eta_0))$.
  Since $\widetilde\eta_0\le\eta_0$ and $\psi $ is non-decreasing, and
  $\varphi_{\mu}$ dominates $\psi$, this yields
  $U(x,1) \le \varphi_{\mu}(\delta_R(x;\eta_0))$, and so $\eta_{1}(x)=1$.
  It follows that $\widetilde\eta_1 \le \eta_1$, and by iteration,
  $\widetilde\eta_n \le \eta_n$.
\end{proof}

In what follows we always assume that $\eta $ is constructed as in
\eqref{eqn:etacouplconstr} and define the filtration
\begin{equation}
  \label{eqn:filtration_Fn}
  \mathcal F_n := \sigma\big( \eta_0(x) : x\in \mathbb Z^d \big)
  \vee \sigma \big(U(x,i): x\in \mathbb Z^d, i\le n\big)
  \supseteq \sigma
  \big(\eta_i(x): x\in \mathbb Z^d, i\le n\big).
\end{equation}

\subsection{Concentration and comparison with deterministic profiles}
\label{ss:conc+comp}

As remarked under \eqref{eqn:varphi}, the random variables
$(\eta_{n+1}(x): x \in \mathbb Z^d)$ are conditionally independent given
$\eta_n$. Therefore, the density $\delta_R(x;\eta_{n+1})$ should
concentrate, at least for $R$ large. We need estimates providing
quantitative control of this concentration. These estimates involve
certain sequences of functions $\xi_k^{\pm}$ on $\mathbb Z^d$, which we
call \emph{comparison density profiles}, that have the property that if
at some time $t$ the local density of $\eta_t$ is controlled by
$\xi_k^{\pm}$, then, at least locally, the density of $\eta_{t+1}$ is
controlled by $\xi^{\pm}_{k+1}$. In fact, the sequences  $\xi_k^-$ and
$\xi_k^+$ that we use later can be regarded as a travelling wave sub- and
super-solution, respectively, of the coupled map lattice
iteration~\eqref{eqn:cmlattice}.

\begin{definition}
  \label{def:density_profiles}
  For a given $\varepsilon, \delta  >0$, \emph{comparison density
    profiles} are deterministic functions
  $\xi_k^{-}, \xi_k^{+} : \Z^d \to [0,\infty)$, $k=0,1,\dots,k_0$,
  satisfying:
  \begin{enumerate}[(i)]
    \item \label{profile:i}
    For every $k=0,\dots,k_0$,
    $\xi_k^{-}(\cdot) \le \xi_k^{+}(\cdot)$.

    \item \label{profile:ii}
    For every $k=0,\dots,k_0$,
    $\Supp(\xi_k^-):=\{x\in \mathbb Z^d: \xi_k^-(x) > 0\}$ is finite, and
    $\xi_k^-(x)\ge \varepsilon $ for
    every $x\in \Supp(\xi_k^-)$.

    \item \label{profile:iii}
    For every $k = 0,\dots,k_0-1$, and $x\in \Supp(\xi_k^-)$ it holds
    that if $\zeta : B_R(x) \to \mathbb R$ satisfies $\zeta (y) \in
    [\xi_k^{-}(y), \xi_k^{+}(y)]$ for all $y\in B_R(x)$, then
    \begin{equation}
      \label{eqn:xi.comp.bd1}
      (1+\delta) \xi_{k+1}^{-}(x)
      \le V_R^{-d} \sum_{y \in B_R(x)} \varphi_\mu ( \zeta(y))
      \le (1-\delta) \xi_{k+1}^{+}(x).
    \end{equation}
  \end{enumerate}
\end{definition}

Note that $\xi_k^{-}, \xi_k^{+}$ will in general depend on $R$, $\mu$,
$\varepsilon $ and $\delta $, but we do not make this explicit in the
notation (in fact, $\delta, \varepsilon$ could also depend on $R$ and
  $\mu$).

\begin{lemma}
  \phantomsection
  \label{lem:dens.dev}
  \begin{enumerate}[(a)]
    \item
    For comparison density profiles $\xi^{\pm}_{k}$, if for some $x\in \mathbb Z^d$ and
    $k \in \{ 0,\dots,k_0-1 \}$
    \begin{equation}
      \label{eqn:etak.dens.ass}
      \delta_R(y; \eta_k) \in \big[\xi_k^{-}(y), \xi_k^{+}(y)\big]
      \quad \text{for all } y \in B_R(x),
    \end{equation}
    then
    \begin{equation}
      \label{eqn:profile_propagation}
      \P\Big( \xi_{k+1}^{-}(x) \le \delta_R(x; \eta_{k+1}) \le \xi_{k+1}^{+}(x)
        \,\Big|\, \mathcal F_k \Big)
      \ge 1-2 \exp(- c V_R^d),
    \end{equation}
    where
    $c = ({\delta \varepsilon})/\big({1/(2\delta \varepsilon) + 2/3}\big)$.

    \item If, in \eqref{eqn:xi.comp.bd1}, $\varphi_\mu $ is replaced by
    any $\psi$ satisfying \eqref{eqn:psiphirelation}, then statement (a)
    holds for the monotone dynamics
    $\widetilde\eta$ defined in \eqref{eqn:etatilde} in place of $\eta $.
  \end{enumerate}
\end{lemma}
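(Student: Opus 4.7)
The plan is to exploit the conditional independence built into the construction~\eqref{eqn:etacouplconstr} and apply a Bernstein-type concentration bound. Given $\mathcal{F}_k$, the indicators in the family $\{\eta_{k+1}(y): y\in B_R(x)\}$ are independent Bernoullis with success probabilities $p_y = \varphi_\mu(\delta_R(y;\eta_k))$, since they are measurable functions of the independent uniforms $U(y,k+1)$. Consequently $S := V_R^d\,\delta_R(x;\eta_{k+1}) = \sum_{y \in B_R(x)} \eta_{k+1}(y)$ is a sum of $V_R^d$ conditionally independent $\{0,1\}$-valued variables with $\mathcal F_k$-conditional mean $V_R^d \bar\mu$, where
$$\bar\mu := V_R^{-d}\sum_{y\in B_R(x)} \varphi_\mu\bigl(\delta_R(y;\eta_k)\bigr).$$

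Next I would insert $\zeta(y) = \delta_R(y;\eta_k)$ into property~(iii) of Definition~\ref{def:density_profiles}, which is applicable by the hypothesis~\eqref{eqn:etak.dens.ass}. This sandwiches the conditional mean as $(1+\delta)\,\xi_{k+1}^-(x) \le \bar\mu \le (1-\delta)\,\xi_{k+1}^+(x)$. Hence the failure of the lower conclusion forces $S - V_R^d\bar\mu \le -V_R^d \delta\,\xi_{k+1}^-(x)$, and the failure of the upper forces $S - V_R^d\bar\mu \ge V_R^d \delta\,\xi_{k+1}^+(x)$. In the non-trivial case where the corresponding $\xi_{k+1}^\pm(x)$ is positive, property~(ii) guarantees that both deviations have magnitude at least $V_R^d \delta\varepsilon$; in the degenerate case $\xi_{k+1}^+(x)=0$ one has $\bar\mu=0$, so $S \equiv 0$ and the bound holds trivially.

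The final step is the one-sided Bernstein inequality applied conditionally on $\mathcal{F}_k$ to the centred Bernoullis $\eta_{k+1}(y) - p_y$: each summand is bounded by $M=1$ in absolute value and $\sum_y p_y(1-p_y) \le V_R^d/4$. For a deviation of size $V_R^d a$ this gives
$$\exp\!\Bigl(-\frac{(V_R^d a)^2}{2 \cdot V_R^d/4 + 2V_R^d a/3}\Bigr)
  = \exp\!\Bigl(-\frac{V_R^d \, a^2}{1/2 + 2a/3}\Bigr).$$
The function $a \mapsto a^2/(1/2 + 2a/3)$ is increasing on $[0,\infty)$, so inserting the worst case $a = \delta\varepsilon$ and rearranging produces the constant $c = (\delta\varepsilon)^2/(1/2 + 2\delta\varepsilon/3) = \delta\varepsilon/\bigl(1/(2\delta\varepsilon) + 2/3\bigr)$ stated in the lemma, and a union bound over the two failure modes supplies the prefactor $2$.

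Part~(b) needs no new input: the inequality~\eqref{eqn:psiphirelation} is not invoked here. Only the $\psi$-version of property~(iii) enters to sandwich the mean, and the construction~\eqref{eqn:etatilde} retains the same conditional independence of $\widetilde\eta_{k+1}(\cdot)$ given $\mathcal{F}_k$, so the Bernstein estimate is literally the same with $\psi$ in place of $\varphi_\mu$. There is no serious conceptual obstacle; the only care needed is in tracking the Bernstein constants and verifying the monotonicity used to substitute $a = \delta\varepsilon$.
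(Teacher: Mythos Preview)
Your proposal is correct and follows essentially the same approach as the paper: both use the conditional independence of $\{\eta_{k+1}(y): y\in B_R(x)\}$ given $\mathcal F_k$, the sandwich on the conditional mean from property~(iii), the trivial variance bound $\sum_y p_y(1-p_y)\le V_R^d/4$, and a one-sided Bernstein inequality to arrive at exactly the constant $c=\delta\varepsilon/\bigl(1/(2\delta\varepsilon)+2/3\bigr)$. The only cosmetic difference is that the paper rewrites the Bernstein exponent as $w/(2\sigma_n^2/w + 2/3)$ before bounding, whereas you invoke monotonicity of $a\mapsto a^2/(1/2+2a/3)$ directly; these are equivalent manipulations.
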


\begin{remark}
  \label{rem:onlyLB}
  If only a lower bound is required, as e.g.~in the proof of survival,
  one can use the ``trivial'' upper bound for $\xi_n^{+}$, namely
  $\xi_n^{+}(\cdot) \equiv \max(\varphi_\mu)/(1-\delta) = e^{-1}/(1-\delta)$,
  and then apply \eqref{eqn:profile_propagation} only for the lower bound.
\end{remark}

\begin{proof}[Proof of Lemma~\ref{lem:dens.dev}]
  We only show (a), the proof of (b) is completely analogous.
  We consider first the lower bound, that is we want to show that the conditional
  probability of the event $\{{\delta_R(x;\eta_{k+1}) < \xi_{k+1}^-(x)}\}$ is small, given $\mathcal{F}_k$.
  Note that, by \eqref{eqn:etak.dens.ass} and \eqref{eqn:xi.comp.bd1},
  \begin{equation*}
    \sum_{y \in B_R(x)} \E[ \eta_{k+1}(y) \mid \mathcal F_k ]
    = \sum_{y \in B_R(x)} \varphi_\mu\big(\delta_R(y; \eta_k)\big) \ge (1+\delta) V_R^d
    \xi_{k+1}^{-}(x).
  \end{equation*}
  Therefore,
  \begin{equation}
    \label{eqn:dens.dev.1pt.lower.1}
    \begin{split}
      & \P\Big( \delta_R(x; \eta_{k+1}) <  \xi_{k+1}^{-}(x) \,\Big|\,
        \mathcal F_k \Big)
      \\ & \quad \le
      \P\Big( {\textstyle \sum_{y \in B_R(x)}} \big( \eta_{k+1}(y)
          - \E[ \eta_{k+1}(y) \,|\, \mathcal F_k ] \big)
        < - \delta V_R^d \xi_{k+1}^{-}(x) \,\Big|\, \mathcal F_k \Big)
    \end{split}
  \end{equation}
  and
  \begin{align*}
    & \Var\big( \delta_R(x;\eta_{k+1}) \,\big|\, \mathcal F_k \big)
    = V_R^{-2d}\sum_{y \in B_R(x)} \varphi_\mu\big( \delta_R(y; \eta_k) \big)
    \big( 1- \varphi_\mu\big( \delta_R(y; \eta_k) \big) \big)
    \le \frac{1}{4} V_R^{-d}.
  \end{align*}

  We now apply the Bernstein inequality (which we recall in
    Lemma~\ref{lem:Bernstein} in the \hyperref[appendix]{Appendix}) to the right-hand side of
  \eqref{eqn:dens.dev.1pt.lower.1} with $n=V_R^d$,
  $\sigma_n \le V_R^{-d/2}/2$, $m_n \le 1$ and
  $w = \delta V_R^d \xi_{k+1}^{-}(x) \ge \delta \varepsilon V_R^d$
  (since, by assumption \eqref{profile:ii} $\xi_{k+1}^-(x)\ge \varepsilon$ if
    $\xi_{k+1}^{-}(x) > 0$, and there is nothing to prove if
    $\xi_{k+1}^{-}(x) = 0$). The expression in the exponent of the
  right-hand side of \eqref{eqn:Bernsteinineq} then satisfies
  \begin{equation*}
    \frac{w^2}{2\sigma_n^2 + (2/3) m_n w} = \frac{w}{2\sigma_n^2/w + (2/3) m_n}
     \ge \frac{w}{V_R^d/(2 w) + 2/3}
    \ge \frac{\delta \varepsilon}{1/(2\delta \varepsilon) + 2/3} V_R^d,
  \end{equation*}
  which completes the proof of the lower bound in
  \eqref{eqn:profile_propagation}.

  The proof of the upper bound, that is showing that
  the probability (conditional on $\eta_k$) of the event
  $\{ \delta_R(x; \eta_{k+1}) > \xi_{k+1}^{+}(x)\}$
  is small, is completely analogous.
\end{proof}

\subsection{Lower bounds on travelling waves}
\label{ss:profiles}

The goal of this section is to construct explicit comparison density profiles
$\xi_k^-$ which can later be used as the lower bounds on
$\delta_R(\cdot; \eta)$ in the proofs of survival and complete
convergence. As pointed out before, these can be viewed as travelling
wave sub-solutions to the iteration~\eqref{eqn:cmlattice}.

We start by providing the basic building block for this construction. To
this end we concentrate first on the one-dimensional setting. For
parameters $a>1$, $\varepsilon_0 \in (0,1)$, $w >0, s> 0$ and
$R\in \mathbb N$ we say that a non-decreasing function
$f : \Z \to [0, \infty)$ is a linear travelling wave shape with width
$\lceil w R \rceil$, shift $\lceil s R \rceil$, growth factor $a$ and
minimal step size $\varepsilon_0$ if it fulfils
\begin{equation}
  \label{eqn:f_properties}
   f(x) = 0 \, \text{ for } \, x < 0, \qquad f(0)=\varepsilon_0,
  \qquad f(x) = 1 \, \text{ for } \, x \ge \lceil w R \rceil
\end{equation}
and
\begin{equation}
  \label{eqn:density_domination}
   a\delta_R(x; f) \ge f\big(x+\lceil s R \rceil\big) \quad
  \text{ for all } x \in \Z .
\end{equation}
In this parametrisation, we think of a ``wave profile'' which, when
subjected to one iteration of the operation
$f(\cdot) \mapsto a \delta_R(\cdot;f)$,  moves to the left by at least
$\lceil s R \rceil$ in each time step. Note that by construction, one
necessarily has that $s \le 1$.

We now show that such a function $f$ exists for any $a>1$ and $R$ large.

\begin{lemma}
  \label{lem:f1dprofilelbd}
  For every $a>1$, there is $w\ge 2$, $\varepsilon_0\in (0,1)$,
  $s\in(0,1)$, and $R_0\in\mathbb N$ such that the function
  \begin{equation*}
    f(x) = \min\big\{ (\varepsilon_0 +  x/\lceil wR \rceil)
      \ind_{x \ge 0}, 1 \big\}
  \end{equation*}
  satisfies \eqref{eqn:f_properties}, \eqref{eqn:density_domination} for
  all $R \ge R_0$.
\end{lemma}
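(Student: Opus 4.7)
The plan is to verify the three defining conditions of $f$ in \eqref{eqn:f_properties} by direct inspection and then establish the density domination \eqref{eqn:density_domination} by a case analysis on the position of $x$, exploiting that $f$ is piecewise linear with at most three pieces. Concretely, with $L = \lceil wR\rceil$, the function $f$ vanishes for $x < 0$, equals $\varepsilon_0 + x/L$ for $0 \le x \le \lfloor L(1-\varepsilon_0) \rfloor$, and equals $1$ for $x \ge \lceil L(1-\varepsilon_0)\rceil$. The parameters will be chosen in the order $a \to s \to w \to \varepsilon_0 \to R_0$, each constraint being recorded at the step where it appears.

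The verification of \eqref{eqn:f_properties} is immediate: the indicator gives $f(x) = 0$ for $x<0$; at $x=0$ one reads off $f(0)=\min(\varepsilon_0,1)=\varepsilon_0$; and for $x \ge \lceil wR\rceil = L$ one has $\varepsilon_0 + x/L \ge \varepsilon_0 + 1 > 1$, so $f(x)=1$. For \eqref{eqn:density_domination}, three main regions are treated separately. If $x + \lceil sR\rceil < 0$, then $f(x+\lceil sR\rceil)=0$ and there is nothing to show. If $x-R \ge L$, then $f\equiv 1$ on $B_R(x)$, so $a\,\delta_R(x;f)=a\ge 1\ge f(x+\lceil sR\rceil)$. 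In the transition region $-\lceil sR\rceil \le x \le L+R$, the density $\delta_R(x;f) = (2R+1)^{-1}\sum_{y=x-R}^{x+R} f(y)$ is computed exactly as an arithmetic sum over the linear piece plus the contributions from the zero and plateau pieces; by taking $w$ large we may assume that $B_R(x)$ straddles at most one of the two boundaries of the linear piece.

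For the bulk subcase $B_R(x) \subset [0, L(1-\varepsilon_0)]$ a direct arithmetic-progression computation yields $\delta_R(x;f) = f(x)$ exactly, and \eqref{eqn:density_domination} reduces to $(a-1)f(x) \ge \lceil sR\rceil/L$; since $f(x) \ge \varepsilon_0$ on the support this holds as soon as $s$ is taken small enough compared with $(a-1)\varepsilon_0 w$. The subcase in which the window straddles the upper boundary $L(1-\varepsilon_0)$ is only easier, as the plateau contribution pushes $\delta_R(x;f)$ upward while $f(x+\lceil sR\rceil) \le 1$.

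The main obstacle will be the \emph{leading-edge} subcase $x \approx -\lceil sR\rceil$: there $f(x+\lceil sR\rceil)=\varepsilon_0$ (the minimum positive value of $f$), while only about a fraction $(1-s)/2$ of the window $B_R(x)$ contributes and those contributions sit near the jump from $0$ to $\varepsilon_0$. An explicit computation together with leading-order asymptotics as $R\to\infty$ give
\begin{equation*}
  a\,\delta_R(-\lceil sR\rceil;\, f)
  \ = \ a\left(\frac{1-s}{2}\,\varepsilon_0 + \frac{(1-s)^2}{4w}\right)+O(1/R),
\end{equation*}
and this must exceed $\varepsilon_0$. For $a\ge 2$ and $s$ small this is automatic; for $a\in(1,2)$, after fixing $s$ small enough that $a(1-s)/2 < 1$, one is forced to require
\begin{equation*}
  \varepsilon_0 \ \le \ \frac{a\,(1-s)^2}{2w\,[\,2-a(1-s)\,]},
\end{equation*}
which explains why $\varepsilon_0$ has to be chosen after $s$ and $w$. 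Finally $R_0$ is taken large enough that all the discretisation errors $O(1/R)$ from replacing ceilings and sums by their continuous counterparts are absorbed into the strict inequalities just established.
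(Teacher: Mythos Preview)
Your case analysis is the right strategy, and your endpoint computations are correct as far as they go, but there is a genuine gap in the leading-edge region. For $x$ in the range $[-\lceil sR\rceil, R)$, where $B_R(x)$ straddles the origin, the quantity
\[
g(x) := a\,\delta_R(x;f) - f\big(x+\lceil sR\rceil\big)
\]
is (up to $O(1/R)$) a \emph{convex} quadratic in $x$: the density $\delta_R(x;f)=V_R^{-1}\sum_{y=0}^{x+R}(\varepsilon_0+y/L)$ has positive leading coefficient $a/(2LV_R)$, while the right-hand side is linear. Checking $g\ge 0$ only at the two endpoints $x=-\lceil sR\rceil$ and $x\approx R$ therefore does \emph{not} ensure $g\ge 0$ throughout the interval. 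Indeed, with your parameter scheme one can easily satisfy both endpoint checks yet have $g$ dip below zero in the interior: for instance take $a=1.5$, $s=0.4$ (so $s<a-1$), $\varepsilon_0 w=0.2$ (well inside your upper bound on $\varepsilon_0$); then the vertex of the quadratic sits near $x\approx 0.13R$ and a direct computation gives $g(0.13R)\approx -0.08/w<0$. The paper handles this region by writing out the quadratic $\alpha x^2+\beta x+\gamma$ explicitly and showing its discriminant is negative, which forces a specific coupling between $\varepsilon_0$, $w$ and $s$ (in fact $w=1/\sqrt{\varepsilon_0}$ and $s=\sqrt{\varepsilon_0}/(1+\sqrt{\varepsilon_0})-\varepsilon_0$) rather than merely an upper bound on $\varepsilon_0$.

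A smaller issue: your assertion that the upper-boundary subcase is ``only easier'' is not correct as stated. Near the upper kink the plateau lies \emph{below} the linear extrapolation, so $\delta_R(x;f)\le f(x)$ there, not $\ge$; one still needs a separate check that $a\,\delta_R(x;f)\ge 1$ at the smallest $x$ where $B_R(x)$ meets the plateau, which amounts to an additional constraint of the form $a(1-1/w+\varepsilon_0)\ge 1$, i.e.\ essentially $w\gtrsim a/(a-1)$. Finally, your parameter order $a\to s\to w\to\varepsilon_0$ combined with the bulk condition ``$s$ small compared with $(a-1)\varepsilon_0 w$'' is circular as written, since $\varepsilon_0 w$ is fixed only after $s$.
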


The proof of Lemma~\ref{lem:f1dprofilelbd} is a straightforward, albeit
somewhat lengthy computation, and is given in
Section~\ref{sec:pf.lem:f1dprofilelbd}.  In
fact, with even lengthier computations it could be shown that the lemma
holds for any $R \ge 1$.

Using this travelling wave shape we can now define the desired comparison
density profile $\xi^-_n$. For this, fix $R_{\mathrm{init}} \in \N$ with
$R_{\mathrm{init}} > 2R $ and set, for $x\in \mathbb Z$,
\begin{equation}
  \label{eqn:xitilde}
  \widetilde\xi_n(x) := f\big(R_{\mathrm{init}}
    + n \lceil s R \rceil+ \lceil w R \rceil -|x|\big)
\end{equation}
with $f$ from Lemma~\ref{lem:f1dprofilelbd}, see
Figure~\ref{fig:xi_profiles} for an illustration. Note that by
construction, ${\widetilde\xi_n(\cdot) \equiv 1}$ on
$B_{R_{\mathrm{init}} + n \lceil s R \rceil}(0)$ and
$\Supp(\widetilde\xi_n) = B_{R_{\mathrm{init}}
  + n \lceil s R \rceil + \lceil w R \rceil}(0)$.
Furthermore, using \eqref{eqn:density_domination},
$a \delta_R(x; \widetilde\xi_n) \ge \widetilde\xi_{n+1}(x)$ for all
$x \in \Z$, and $\widetilde\xi_n(x)>0$ implies
$\widetilde\xi_n(x) \ge \varepsilon_0$.

\begin{figure}[t]
  \centering
  \begin{tikzpicture}[scale = 0.75]
    % x-axis
    \node[right] at (8.31,0){$\mathbb{Z}$};

    \foreach \j in {-83,...,83}
    \draw (\j/10,0)--+(-90:0.05);

    \draw (0,0.05)--+(-90:0.15);
    \node[below] at (0,-0.1){$0$};

    \draw (-4,0.05)--+(-90:0.1);
    \draw (-5,0.05)--+(-90:0.1);
    \draw (-8,0.05)--+(-90:0.1);

    \draw (4,0.05)--+(-90:0.1);
    \draw (5,0.05)--+(-90:0.1);
    \draw (8,0.05)--+(-90:0.1);

    % constant part
    \foreach \j in {-20,...,20}
    \filldraw[blue]  (\j/10,2) circle (0.5pt);

    \foreach \j in {-30,...,-20}
    \filldraw[red] (\j/10,2) circle (0.5pt);

    \foreach \j in {20,...,30}
    \filldraw[red] (\j/10,2) circle (0.5pt);

    \foreach \j in {50,...,60}
    \filldraw[black] (\j/10,2) circle (0.5pt);
    \foreach \j in {-50,...,-60}
    \filldraw[black] (\j/10,2) circle (0.5pt);
    \foreach \j in {-30,...,-40}
    \filldraw[black] (\j/10,2) circle (0.5pt);
    \foreach \j in {30,...,40}
    \filldraw[black] (\j/10,2) circle (0.5pt);

    % dots
    \foreach \j in {-1,0,1}
    {
      \filldraw[black] (4.5+\j*0.1,2) circle (0.1pt);
      \filldraw[black] (-4.5+\j*0.1,2) circle (0.1pt);
    }

    % fronts left
    \foreach \j in {-39,...,-20}
    {
      \filldraw[blue] (\j/10, \j/10+4) circle (0.5pt);
      \filldraw[red] (-1+\j/10, \j/10+4) circle (0.5pt);
      \filldraw[black] (-4+\j/10, \j/10+4) circle (0.5pt);
    }

    % fronts right
    \foreach \j in {20,...,39}
    {
      \filldraw[blue] (\j/10, 4-\j/10) circle (0.5pt);
      \filldraw[red] (1+\j/10, 4-\j/10) circle (0.5pt);
      \filldraw[black] (4+\j/10, 4-\j/10) circle (0.5pt);
    }

    \draw[stealth-stealth] (2,-0.7)--+(0:1.99);
    \draw[stealth-stealth]  (4.01,-0.7)--+(0:0.98);
    \draw[stealth-stealth]  (0,-0.7)--+(0:1.98);

    \node[below] at (3,-0.65){$\lceil wR \rceil$};
    \node[below] at (4.5,-0.65){$\lceil sR \rceil$};
    \node[below] at (1,-0.65){$R_{\mathrm{init}}$};

    \node at (3,1.5){{\color{blue}{ $\xi_0^-$}}};
    \node at (4.2,1.5){{\color{red}{ $\xi_{1}^-$}}};
    \node at (7.2,1.5){{\color{black}{ $\xi_{n}^-$}}};
  \end{tikzpicture}
  \caption{The one-dimensional deterministic comparison density profile
    $\xi_n^-$ built from the linear travelling wave shape $f$, with fronts
    of width $\lceil wR \rceil$ that get shifted outward by
    $\lceil sR \rceil$ in every time step. }
  \label{fig:xi_profiles}
\end{figure}
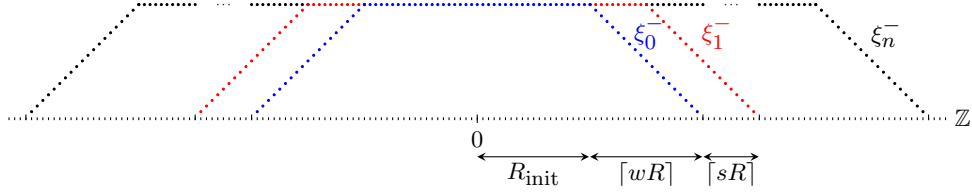

Finally, for any $d \ge 1$, write $x=(x_1,\dots,x_d)$ and set
\begin{equation}
  \label{eqn:xi_n}
  \xi^-_n(x) := b \prod_{i=1}^d \widetilde{\xi}_n(x_i), \quad
  x \in \Z^d, n \in \N_0
\end{equation}
with some $0 < b \le 1$ that will be suitably tuned later. Note that
$\xi^-_n$ implicitly depends on $d$, $R$, $R_{\mathrm{init}}$, $a$ and $b$
but our notation does not make this explicit. We summarise the relevant
properties of $\xi^-_n$ in the following lemma.

\begin{lemma}
  \label{lem:xi_n}
  The functions $\xi_n^-$ have the following properties:
  \begin{enumerate}[(i)]
    \item $0 \le \xi^-_n(x) \le b$ for every $n\in \mathbb N_0$ and
    $x\in \mathbb Z^d$,

    \item for every $n\in \mathbb N_0$, $\xi^-_n(\cdot) \equiv b$ on
    $B_{R_{\mathrm{init}} + n \lceil s R \rceil}(0)$ and
    $\Supp(\xi^-_n) = B_{R_{\mathrm{init}} + n \lceil s R \rceil + \lceil w R \rceil}(0)$,

    \item $a^d \delta_R(x; \xi^-_n) \ge \xi^-_{n+1}(x)$ for all $n \in \N_0$,
    $x \in \Z^d$,
    \item $\xi^-_n(x)>0$ implies $\xi^-_n(x) \ge b \varepsilon_0^d$.
  \end{enumerate}
\end{lemma}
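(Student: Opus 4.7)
The plan is to exploit the product structure in \eqref{eqn:xi_n} to reduce all four properties to facts about the one-dimensional profile $\widetilde{\xi}_n$, and then to leverage the travelling-wave inequality \eqref{eqn:density_domination} to obtain the propagation claim (iii). The key observation that drives everything is that $B_R(x)$ is a Cartesian product in the sup-norm, so Fubini yields
\[
\delta_R(x; \xi_n^-) \;=\; b \prod_{i=1}^d \delta_R\bigl(x_i; \widetilde{\xi}_n\bigr),
\]
where on the right $\delta_R(z;g) := V_R^{-1} \sum_{y=z-R}^{z+R} g(y)$ is the one-dimensional analogue of \eqref{eqn:density}.

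First I would clear the three bookkeeping items (i), (ii), (iv), each of which reduces to a straightforward consequence of \eqref{eqn:f_properties}. Since $f$ is $[0,1]$-valued, so is each $\widetilde{\xi}_n$, giving (i). For (ii), setting $m_n := R_{\mathrm{init}} + n \lceil sR \rceil + \lceil wR \rceil$, the identity $\widetilde{\xi}_n(x_i) = f(m_n - |x_i|)$ combined with $f=1$ on $[\lceil wR \rceil, \infty)$ and $f=0$ on $(-\infty, 0)$ translates the conditions ``$\widetilde{\xi}_n(x_i) = 1$ for every $i$'' and ``$\widetilde{\xi}_n(x_i) > 0$ for every $i$'' into sup-norm balls of the claimed radii. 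For (iv), positivity of $\xi_n^-(x)$ forces each $\widetilde{\xi}_n(x_i) = f(m_n - |x_i|) > 0$, hence $m_n - |x_i| \ge 0$, hence $\widetilde{\xi}_n(x_i) \ge f(0) = \varepsilon_0$ by monotonicity, and multiplying $d$ such factors with the prefactor $b$ yields $\xi_n^-(x) \ge b \varepsilon_0^d$.

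The substantive step is (iii). By the product identity it suffices to prove the one-dimensional inequality $a\,\delta_R(z; \widetilde{\xi}_n) \ge \widetilde{\xi}_{n+1}(z)$ for every $z \in \mathbb Z$ and multiply over the $d$ coordinates. Using symmetry $\widetilde{\xi}_n(y) = \widetilde{\xi}_n(-y)$, I would assume $z \ge 0$ and split into two cases. In the \emph{interior case} $z + R \le R_{\mathrm{init}} + n \lceil sR \rceil$, every $y \in [z-R, z+R]$ satisfies $|y| \le z + R \le R_{\mathrm{init}} + n \lceil sR \rceil$, so $\widetilde{\xi}_n \equiv 1$ on $B_R(z)$, and $a\,\delta_R(z; \widetilde{\xi}_n) = a > 1 \ge \widetilde{\xi}_{n+1}(z)$. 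In the \emph{edge case} $z + R > R_{\mathrm{init}} + n \lceil sR \rceil$, the assumption $R_{\mathrm{init}} > 2R$ (together with $n \ge 0$) forces $z > R$, so the summation range $[z-R, z+R]$ lies strictly in $\mathbb N$ and $|y| = y$ throughout. A change of variables $u = m_n - y$ then produces the identity
\[
\delta_R(z; \widetilde{\xi}_n) \;=\; \delta_R(m_n - z; f),
\]
and applying \eqref{eqn:density_domination} at $m_n - z$ gives $a\,\delta_R(z; \widetilde{\xi}_n) \ge f(m_n - z + \lceil sR \rceil) = f(m_{n+1} - z) = \widetilde{\xi}_{n+1}(z)$, completing the one-dimensional bound.

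The hard part will be precisely this case split. A naive attempt to pull back \eqref{eqn:density_domination} directly to the tent-shaped $\widetilde{\xi}_n$ fails for $z$ near the origin: once $B_R(z)$ straddles zero, the ``reflected'' side of the tent contributes strictly less mass to $\delta_R(z; \widetilde{\xi}_n)$ than the genuinely one-sided $\delta_R(m_n - z; f)$, and the clean identity above breaks. The hypothesis $R_{\mathrm{init}} > 2R$ is imposed exactly so that whenever $\widetilde{\xi}_n$ is not already saturated at $1$ on $B_R(z)$, the ball lies entirely on one side of $0$, sidestepping the reflection issue.
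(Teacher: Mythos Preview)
Your proposal is correct and follows essentially the same approach as the paper: factor $\delta_R(x;\xi_n^-)$ via the product structure of $B_R(x)$ in the sup-norm, reduce (iii) to the one-dimensional inequality $a\,\delta_R(z;\widetilde\xi_n)\ge\widetilde\xi_{n+1}(z)$, and read (i), (ii), (iv) off from \eqref{eqn:f_properties}. The paper simply asserts the one-dimensional inequality ``using \eqref{eqn:density_domination}'' without further argument; you go further and supply the case split (interior vs.\ edge) and correctly pinpoint that the hypothesis $R_{\mathrm{init}}>2R$ is exactly what keeps the ball $B_R(z)$ from straddling the tent's apex when $\widetilde\xi_n$ is not yet saturated, so that the change of variables $u=m_n-y$ is clean.
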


\begin{proof}
  The properties (i), (ii) and (iv) follow directly from
  \eqref{eqn:f_properties}, \eqref{eqn:xitilde} and \eqref{eqn:xi_n}.
  Using \eqref{eqn:density_domination}, \eqref{eqn:xitilde},
  \eqref{eqn:xi_n}, it follows moreover that
  \begin{align*}
    a^d \delta_R(x; \xi^-_n)
    & = a^d V_R^{-d} \sum_{y \in B_R(0)} \xi^-_n(y+x)
    \\& =  b a^d V_R^{-d}  \sum_{y_1=-R}^R \dots \sum_{y_d=-R}^R
    \prod_{i=1}^d \widetilde{\xi}_n(x_i + y_i)
    \\& = b \prod_{i=1}^d \bigg( a V_R^{-1}\sum_{y= -R}^R
      \widetilde{\xi}_n(x_i + y)  \bigg)
    = b \prod_{i=1}^d \big(a \delta_R(x_i; \widetilde{\xi}_n )\big)
    \\& \ge b \prod_{i=1}^d \widetilde{\xi}_{n+1}(x_i)
    = \xi^-_{n+1}(x),
  \end{align*}
  which shows (iii) and completes the proof.
\end{proof}

\section{Survival for large \texorpdfstring{$R$}{R}:
  Proof of Theorem~\ref{thm:survival_eta} and Corollary~\ref{cor:survival_eta_cor}}
\label{sec:survival}

In this section we prove Theorem~\ref{thm:survival_eta}, stating that the
system survives for any $\mu >1$, given that $R$ is chosen sufficiently
large; we also prove Corollary~\ref{cor:survival_eta_cor}. The proof is based on the comparison with a monotone system
$\widetilde \eta$, which in turn is shown to survive using a comparison
with finite range oriented percolation. The latter is a by now classical
technique for interacting particle systems, we refer to
\cite{Czuppon2016thesis}, \cite{lanchier2017stochastic} or
\cite{swart2017course} for recent and reader-friendly introductions.

The monotone system $\widetilde \eta $ is constructed as in
Section~\ref{ss:coupling}: we first fix parameters
$\widetilde a \in (1, \mu)$ and $b\in (0,1)$, so that the function $\psi$
defined by
\begin{equation}
  \label{eqn:psi}
  \psi(w) := \widetilde a (w \wedge b)
\end{equation}
satisfies \eqref{eqn:psiphirelation}. This is possible since $\mu >1$.
With this $\psi$, we define $\widetilde \eta $ as in \eqref{eqn:etatilde}
and simultaneously $\eta$ as in \eqref{eqn:etacouplconstr} on the
probability space supporting the i.i.d.~uniform random variables
$(U(x,n))_{x\in \mathbb Z^d, n\in \mathbb N_0}$.

We then fix  $a>1$ such that $a^d < \widetilde a$, and for this choice of
$a$, we fix $R_0$, $w$, $s$ and $\varepsilon_0$ according to
Lemma~\ref{lem:f1dprofilelbd}. For $R \ge R_0$, we set
$R_{\mathrm{init}} := \lceil w R /2\rceil$ and define $\xi^-_n$ as in
\eqref{eqn:xi_n}. We claim that $\xi^{-}_n(x)$ (and the trivial $\xi^+_n$,
  as explained in Remark~\ref{rem:onlyLB}) is a comparison density profile in the
sense of Definition~\ref{def:density_profiles} with
$\delta = (\widetilde{a}/a^d) - 1$ and $\varepsilon = b \varepsilon_0^d$.
Moreover the lower bound of \eqref{eqn:xi.comp.bd1} even holds with $\psi$
in place of $\varphi_\mu $. Indeed, \eqref{profile:i} is trivially true,
\eqref{profile:ii} follows from Lemma~\ref{lem:xi_n}(iv). To show
\eqref{profile:iii}, that is \eqref{eqn:xi.comp.bd1} (with $\psi$ in
  place of $\varphi_\mu $), let $\zeta=(\zeta(y)) \in [0,1]^{\Z^d}$ be
such that $\zeta(\cdot) \ge \xi_n^-(\cdot)$ for some $n \in \N_0$. Then,
using Lemma~\ref{lem:xi_n}(iii) for the last inequality,
\begin{align*}
  V_R^{-d} \sum_{y \in B_R(x)} \psi \big( \zeta(y) \big)
  & = V_R^{-d} \sum_{y \in B_R(x)}
  \widetilde{a}\big(\zeta (y)  \wedge b\big)  \\
  & \ge \frac{\widetilde{a}}{a^d}\cdot a^d V_R^{-d} \sum_{y \in B_R(x)}
  \xi^-_n(y)
  \\&= \frac{\widetilde{a}}{a^d}\cdot a^d \delta_R(x; \xi_n^-)
   \ge \frac{\widetilde{a}}{a^d}\cdot \xi_{n+1}^-(x) ,
\end{align*}
as required. As a consequence, we will later be able to apply the
concentration result of Lemma~\ref{lem:dens.dev}(b) to the process
$\tilde \eta$.

Define $R'_{\block} = \lceil wR /2\rceil$. To set up the comparison with
oriented percolation, we coarse-grain the system by using blocks spaced
by $L'_{\block} := 2R'_\block$, of side length
$L_{\block} := 5 L'_{\block}$ and temporal size
$T_\block := \big\lceil \lceil w R \rceil/\lceil s R \rceil \big\rceil$.
Since we often refer to radii rather than block lengths, it is convenient
to define $R_{\block} = L_\block/2$.

For $(z,t)$ in the sub-lattice
$\mathbb L:= L'_\block \mathbb Z^d \times T_\block \mathbb N_0$, we define
\begin{equation*}
  \Block(z,t) := \big\{ (x,n) \in \Z^d \times \N_0
    :\norm{x - z} \le R_\block, \, t \le n \le t+ T_\block \big\}.
\end{equation*}
Note that blocks in the same time-layer have non-trivial overlap with
their neighbours but the number of overlapping neighbours in $\mathbb{L}$
per block does not grow with $R$. In the time direction, only the top
time slice of a given block coincides with the bottom layer of the next
block(s).

\begin{definition}
  \label{def:well'started'survival}
  We call $\Block(z,t)$ \emph{well-started} if the density of the
  monotone system $\widetilde \eta $ dominates the (suitably shifted)
  density profile $\xi_0^-$ at the bottom of the block, that is
  \begin{align}
    \label{eqn:wellstarted}
    \delta_R\big(x; \widetilde\eta_{t} \big)
    \ge \xi_0^{-}(x-z)
    \quad \text{for } \norm{x-z}\le R_\block.
  \end{align}
\end{definition}

Note that for any $(z, t) \in \mathbb L$ the event
$\{ \Block(z,t) \text{ is well-started} \} $ is measurable with respect
to the filtration $\mathcal{F}_t$, which was defined in
\eqref{eqn:filtration_Fn}.

\begin{definition}
  \label{def:good_block_survival}
  $\Block(z,t)$ is called \emph{good} if it is well-started and
  the random variables $U(x,n)$
  are such that the domination property of \eqref{eqn:wellstarted} propagates over the block. That is,
  $\Block(z,t)$ is \emph{good} if
  it holds that
  \begin{align*}
    \delta_R\big(x; \widetilde\eta_{t+n} \big)
    \ge \xi_n^{-}(x-z)
    \quad \text{for } \norm{x-z}\le R_\block, \, n=0,\dots, T_\block.
  \end{align*}
\end{definition}

The properties of the comparison density profiles $\xi_n^{-}$, see
Lemma~\ref{lem:xi_n}, enforce
\begin{align}
  \label{goodmakeswell}
  \{ \Block(z,t) \text{ is good} \} \; \subseteq
  \bigcap_{\substack{z' \in L'_\block \Z^d\, : \\ \norm{z-z'} \le L'_\block}}
  \{ \Block(z',t+T_\block) \text{ is well-started} \}.
\end{align}
In particular, the process $\widetilde\eta$ survives up to time
$t+T_\block$ in a good $\Block(z,t)$ and the region of the desired
density control by the profiles $\xi_n^-$ expands, see
Figure~\ref{fig:block}.

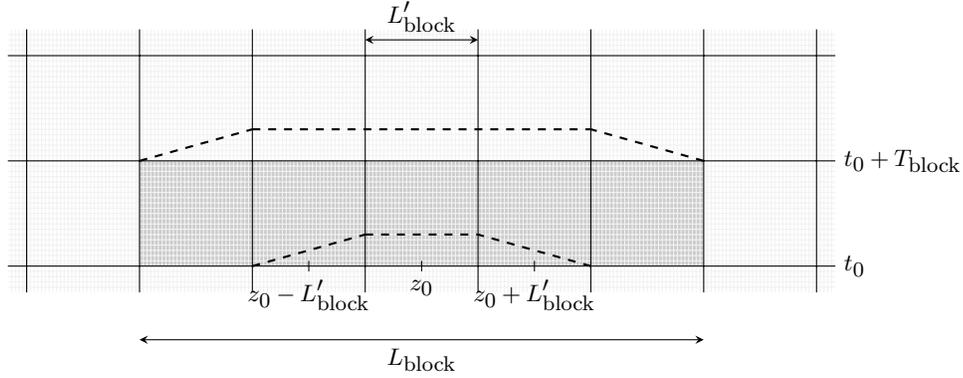
\begin{figure}
  \centering\qquad
  \begin{tikzpicture}[xscale=0.50,yscale=0.7]
    %good blocks
    \fill [solid,gray!40] (-9,0) rectangle (6,2);

    % grid and labels
    \draw[step=0.1cm,black!5](-12.5,-.3) grid (9.5,4.4);
    \draw[xstep=3cm,ystep=2] (-12.5,-.3) grid (9.5,4.4);

    \node[below] at (-1.5,-0.2){$z_0$};
    \draw[] (-1.5,0.1)--(-1.5,-0.1);

    \node[below] at (1.5,-0.15){$z_0+ L'_{\mathrm{block}}$};
    \draw[] (1.5,0.1)--(1.5,-0.1);

    \node[below] at (-4.5,-0.15){$z_0- L'_{\mathrm{block}}$};
    \draw[] (-4.5,0.1)--(-4.5,-0.1);

    \node[right] at (9.5,0){$t_0$};
    \node[right] at (9.5,2){$t_0+T_{\mathrm{block}}$};

    \draw[stealth-stealth] (-3,4.25)--(0,4.2);
    \node[above] at (-1.5,4.35){$L'_{\mathrm{block}}$};

    \draw[stealth-stealth] (-9,-1.4)--(6,-1.4);
    \node[below] at (-1.5,-1.4){$L_{\mathrm{block}}$};

    % well started

    % coupled section right
    \draw[thick,dashed]       (-3,0.6)--+(0:3)
    (-6,2.6)--+(0:9);

    \draw[thick,dashed]   (-6,0)--(-3,0.6)
    (3,0)--(0,0.6)
    (-9,2)--(-6,2.6)
    (6,2)--(3,2.6);
  \end{tikzpicture}
  \caption{Sketch of a $\Block(z_0,t_0)$ (dark grey), centred at the
    coarse-grained space-time lattice point $(z_0,t_0)$. The thick dashed
    lines depict the deterministic comparison density profiles $\xi_{t_0}^-(\cdot)$
    and $\xi_{t_0+T_{\mathrm{block}}}^-(\cdot)$ which have to be
    dominated by the density $\delta_R(\cdot;\widetilde\eta_n)$,
    $t_0\le n \le t_0+T_{\mathrm{block}}$ in order for the block to be
    \emph{good}. Note the picture is not drawn to scale: $L_{\block}$ and
    $L'_{\block}$ are both growing linearly in $R$ while $T_{\block}$
    does not grow with $R$.}
  \label{fig:block}
\end{figure}

By the construction
\eqref{eqn:etatilde} of $\widetilde \eta $, given $\mathcal{F}_t$,
if $\Block(z,t)$ is well-started, it can be decided
whether or not the event $\{ \Block(z,t) \text{ is good} \}$ occurs
for $(z, t) \in \mathbb{L}$ by inspecting (only) the values of
\begin{equation}
  \label{eqn:goodmeas}
  \big( U(x,n) \in \Z^d \times \N_0 :
  \norm{x - z} \le R_\block+ T_\block R, \, t < n \le t+ T_\block
  \big)
\end{equation}
(in fact, strictly speaking it suffices to observe the values of $U$'s at the
  space-time points
  $\{ (x,n) : \norm{x - z} \le R_\block+(t+T_\block-n)R, \, t < n \le t+ T_\block \}$).
Note that for $(z, t) \in \mathbb{L}$ and $(z', t) \in \mathbb{L}$ with
\begin{equation}
  \label{eqn:radius_z}
  \norm{z'-z} > L_\block + 2 T_\block R
  \approx (5+2/s) L'_\block \quad (\text{when } R \text{ is large})
\end{equation}
the space-time regions corresponding to \eqref{eqn:goodmeas} will be
disjoint.

Furthermore, by invoking Lemma~\ref{lem:dens.dev}(b)  we can uniformly
bound the probability of the density of $\widetilde \eta$ dominating the
comparison density profile $\xi^-$ for all space-time sites in
$\Block(z,t)$, which in turn yields
\begin{equation}
  \label{eqn:wellgood}
  \P\big(\Block(z,t) \text{ is good} \,\big|\, \mathcal{F}_t \big)
  \ge \ind_{\{\Block(z,t) \text{ is well-started}\}}\big(1- q(T_\block,R)\big)
\end{equation}
with
\begin{equation}
  \label{eqn:qn}
  q(T_\block,R) = 2 \abs[\big]{ \Block(z,t) }
  e^{- c V_R^d},
\end{equation}
which tends to $0$ as $R \to \infty$, since $\abs{\Block(z,t)}$ grows
only polynomially in $R$.

In order to make the comparison with oriented percolation, we define
random variables
\begin{equation}
  \label{eq:Yzt}
  Y(z,t) := \ind_{\{\Block(z,t) \text{ is good}\}}, \quad (z,t)\in \mathbb L,
\end{equation}
and say that $(z,t)\in \mathbb L$ is \emph{connected to infinity} in $Y$ if
there is a path $((z_i,t+i T_\block ):i\in \mathbb N_0)$ in $\mathbb{L}$
with $z_0=z$ and $\norm{z_i-z_{i-1}}\le L'_\block$ for all
$i\in \mathbb N$, such that $Y(z_i,t+i T_\block)=1$ for all
$i\in \mathbb N_0$ (such a path is called \emph{open} in $Y$). By the
argument above, it follows that if $(z,t)$ is well-started and connected
to infinity in $Y$, then the process $\widetilde \eta $ survives.

In order to show that the latter event occurs with positive probability,
we iteratively construct a coupling between the $Y(z,t)$'s from
\eqref{eq:Yzt} and a family $(\widetilde Y(z,t))_{(z,t)\in \mathbb L}$ of
i.i.d.~Bernoulli random variables with parameter $p(R)$ which satisfies
$p(R)\to 1$ as $R\to\infty$ such that we have
\begin{equation}
  \label{eq:Ydominates}
  Y(z,t) \ge \ind_{\{\Block(z,t) \text{ is well-started}\}} \widetilde Y(z,t)
  \quad \text{for all } (z,t) \in \mathbb{L} .
\end{equation}

We construct $\widetilde Y(\cdot,t)$ inductively over $t$ and begin with
a slightly informal description of this construction:
Assume that for some $t' \in T_\block \N$, a coupling satisfying
\eqref{eq:Ydominates} has been achieved for all
$(z,t) \in \mathbb{L}$ with $ T_\block \N \ni t < t'$. We then work
conditionally on $\mathcal{F}_{t'}$. The (random) set of nodes
\begin{equation*}
  W(t') := \{ z' \in L'_\block \Z^d : \Block(z',t') \text{ is well-started}\},
\end{equation*}
viewed as a graph where $z'$ and $z''$ are connected by an edge if the
space-time regions from \eqref{eqn:goodmeas} centred at $(z',t')$ and at
$(z'',t')$, respectively, overlap, is a locally finite graph with
uniformly bounded degrees. In fact, we see from \eqref{eqn:goodmeas} that
we have irrespective of the realisation of $\widetilde{\eta}_{t'}$ the
deterministic bound $(11+4/s)^d$ on the degree of any node (up to
  rounding, see~\eqref{eqn:radius_z}). Thus, by
\eqref{eqn:goodmeas}--\eqref{eqn:qn}, using well known stochastic
domination arguments for percolation models with finite-range
dependencies \cite{liggett1997domination}, it follows that the family
$(Y(z,t'))_{z \in W(t')}$ stochastically dominates a family
$(\widetilde Y(z,t'))_{z \in W(t')}$ of i.i.d.\ Bernoulli random
variables with parameter $p(R)$, where $p(R)\to 1$ as $R\to\infty$ and
the $(\widetilde Y(z,t'))_{z \in W(t')}$ are independent of
$\mathcal{F}_{t'}$ given $W(t')$, i.e.\ \eqref{eq:Ydominates} holds for
all $z \in W(t')$. In fact, $p(R)$ is a function of the maximal degree
$(11+4/s)^d$ of the dependence graph and the minimal guaranteed density
$1-q(T_\block, R)$ of good blocks, see Theorem 1.3 in
\cite{liggett1997domination}. For $z \not\in W(t')$,
\eqref{eq:Ydominates} imposes no condition at all on $\widetilde Y(z,t')$.
Thus, we can simply define $\widetilde Y(z,t') = \widehat Y(z,t')$ for
$z \not\in W(t')$ where $(\widehat Y(z,t))_{(z,t) \in \mathbb{L}}$ is an
independent family of i.i.d.\ Bernoulli($p(R)$) random variables.

In order to formalise this construction and, in particular, to show that
the random variables $\widetilde Y(z,t)$ are independent over different
time layers, note that by the construction \eqref{eqn:etacouplconstr}
from Lemma~\ref{lem:coupling_lemma}, we can write
\begin{equation*}
  Y(\cdot,t') = g\big(\eta_{t'}, (U(\cdot, n) : t' < n \le t'+T_\block)\big)
\end{equation*}
for some deterministic function
$g : \{0,1\}^{\Z^d} \times [0,1]^{\Z^d \times \{1,\dots,T_\block\}} \to \{0,1\}^{L'_\block\Z^d}$,
furthermore
$W(t') = W(\eta_{t'}) = \{ z \in L'_\block\Z^d : \Block(z,t') \text{ is well started}\}$.
For every $\zeta = (\zeta(z))_{z \in \Z^d} \in \{0,1\}^{\Z ^d}$,
\cite[Thm.~1.3]{liggett1997domination} and the discussion above provides
a coupling $\nu_\zeta$ of $\mathcal{L}(Y(\cdot,t') \,|\, \eta_{t'}=\zeta)$
and $\mathrm{Ber}(p(R))^{\otimes \Z^d}$ with the desired properties. We
can then disintegrate this joint law with respect to its first marginal
and describe the joint law $\nu_\zeta$ in a two-step procedure. It is
convenient to describe this via an auxiliary function
$h(\zeta;\cdot,\cdot)$ using additional independent randomness and obtain
that given $\eta_{t'} = \zeta$,
\begin{align*}
  Y(\cdot,t') = g\big(\zeta, (U(\cdot, n) : t' < n \le t'+T_\block)\big), \quad
  \widetilde Y(\cdot,t') = h\big(\zeta; Y(\cdot,t'), \widetilde{U}_{t'}\big)
\end{align*}
where $\widetilde{U}_{t'}$ is independent of everything else and
uniformly distributed on $[0,1]$ (see, for example, Theorem~5.10 in
  \cite{Kallenberg1997foundations}).
By construction, since $U(\cdot,n), n > t'$ and
$\widetilde{U}_{t'}$ are independent of $\mathcal{F}_{t'}$, we have for
$A \in \mathcal{F}_{t'}$ and measurable $B \subseteq\{0,1\}^{L'_\block\Z^d}$
\begin{align*}
  \P\big(A \cap \{ \widetilde{Y}(\cdot,t') \in B\} \big)
  &= \E\Big[ \ind_A \P\big( h(\eta_{t'}; Y(\cdot,t'), \widetilde{U}_{t'}) \in B
      \,|\, \mathcal{F}_{t'}\big) \Big]\\ &= \P(A) \mathrm{Ber}(p(R))^{\otimes L'_\block\Z^ d}(B) .
\end{align*}
This shows the required independence of $\widetilde Y$ and completes the
induction step.

We see from \eqref{eq:Yzt}, \eqref{eq:Ydominates} and
\eqref{goodmakeswell} that every open path in $\widetilde Y(\cdot,\cdot)$
is automatically also an open path in $Y(\cdot,\cdot)$. Furthermore, by
well known properties of oriented site percolation, we have
\begin{equation*}
  \P\big((z,t) \text{ is connected to infinity in } \widetilde Y\big)
  = \P\big((0,0) \text{ is connected to infinity in } \widetilde Y\big)
  > 0
\end{equation*}
if $p(R)$ is sufficiently close to $1$, i.e.~for all $R$ large enough.

To conclude, let $\eta_0$ be any initial configuration containing at
least one particle, and let $\widetilde\eta_0 = \eta_0$. It is then easy
to see (as this involves requiring only finitely many random variables
  $U(x,n)$ to be sufficiently small), that one can find
$(z,t) \in \mathbb{L}$, so that the probability that $\Block(z,t)$
is well-started is positive.

Therefore, due to the above properties,
\begin{align*}
  \mathbb P(\eta \text{ survives})
  &\ge \mathbb P( \widetilde\eta  \text{ survives})
  \\&\ge \E\big[ \ind_{\{\Block(z,t) \text{ is well-started}\}}
    \ind_{\{(z,t)\text{ is connected to infinity in $Y$}\}} \big]
  \\&\ge \mathbb P\big(\Block(z,t) \text{ is well-started}\big)
  \mathbb P\big((z,t)\text{ is connected to infinity in $\widetilde Y$}\big)
  \\&>0 \qquad \text{for all $R$ large enough},
\end{align*}
which completes the proof of Theorem~\ref{thm:survival_eta}.
\smallskip

The proof of Corollary~\ref{cor:survival_eta_cor} follows directly from the properties of the percolation cluster and our definition of good blocks: using the coupling with supercritical oriented percolation (on the level of space-time blocks) constructed above, \eqref{eq:positivedensity} follows from the fact that given that it survives, the cluster of the origin in supercritical oriented percolation has positive density. This can be seen for example via a coupling with a supercritical discrete time contact process started from its upper invariant measure (see e.g.\ \cite[Prop.~6]{DurrettGriffeath1982} and the discussion around Lemma~2.9 in \cite{BirknerCernyDepperschmidt2016}).

\section{Complete convergence}
\label{sec:complete_convergence}

In this section we show our main results in the regime where the particle
system survives with a positive probability and is well approximated by
the deterministic coupled map lattice introduced in
Section~\ref{sect:CML}. In particular, we assume that $\mu  \in (1,e^2)$
and $R$ is large enough. In Section~\ref{sect:proof:thm:coupling}, we
start with Theorem~\ref{thm:coupling} providing the coupling of processes
started with different initial conditions.
Theorem~\ref{thm:complete_convergence} is then shown in
Section~\ref{sec:prfcompconv}.

\subsection{Coupling construction: Proof of Theorem~\ref{thm:coupling}}
\label{sect:proof:thm:coupling}

As in
Section~\ref{sec:survival}, the central ingredient
will be a
block construction and then a suitable comparison with oriented
percolation. The definition of ``good blocks'' will be  more involved than in
Section~\ref{sec:survival} and is inspired by the construction in
\cite[Section~5]{birkner2007survival}.

In brief, the construction of a good block around $z$ is as follows. We
consider a (large) ball $B$ around $z$ and assume that $\eta^{(1)}$ and
$\eta^{(2)}$ agree on $B$ and the respective $R$-densities of the two
processes are close to $\theta_{\mu}$. On an even larger ball $B'$ we add
milder and milder requirements (as the distance from the centre
  increases) on the densities of the processes. The contraction property
of $\varphi_\mu$, see Lemma~\ref{lem:alpha_sequences} below, together
with the concentration property of the densities of $\eta^{(i)}$
guaranteed by Lemma~\ref{lem:dens.dev} then ensure that the area in which
the $\eta^{(1)}$ and $\eta^{(2)}$ are coupled expands in time with high probability. In
order to guarantee survival of the processes we also require that the
respective densities of $\eta^{(1)},\eta^{(2)}$ dominate the
deterministic comparison density profile as defined in \eqref{eqn:xi_n}
(the latter was also used in Section~\ref{sec:survival}).

We now proceed with the formal definitions. Throughout this section, we
again use the coupling construction from Section~\ref{ss:coupling}:
Given two initial conditions $\eta^{(1)}_0$ and $\eta^{(2)}_0$, we
construct both $(\eta^{(1)}_n)_n$ and $(\eta^{(2)}_n)_n$ using
\eqref{eqn:etacouplconstr} with the same $U(x,n)$'s, that is, we set
\begin{equation}
  \label{eqn:coupling}
  \eta_{n+1}^{(i)}(x) = \ind_{\{ U(x,n+1)
      \le \varphi_{\mu}(\delta_R(x;\eta_n^{(i)}))\}},
  \quad i \in \{ 1,2 \}, \quad (x,n)\in \Z^d\times \N_0.
\end{equation}
Since we are from now on interested in two copies of the branching annihilating process, we redefine the filtration $(\mathcal{F}_n)$ from \eqref{eqn:filtration_Fn} by including both initial conditions, i.e.\
\begin{equation*}
    \mathcal F_n := \sigma\big( \eta^{(i)}_0(x) : x\in \mathbb Z^d, i=1,2 \big) \vee \sigma \big(U(x,j): x\in \mathbb Z^d, j\le n\big).
\end{equation*}
It is clear that this updated filtration is finer than the natural filtration of the two processes, in the sense that for all $n\ge0$, it holds that $ \mathcal{F}_n \supseteq\sigma \big(\eta_j^{(i)}(x): x\in \mathbb Z^d, j\le n, i=1,2\big).$

In order to define the comparison density profiles that are used to determine
whether a block is good, we need a simple lemma which gives some useful
properties of the function $\varphi_\mu $ in the vicinity of its
non-trivial fixpoint $\theta_\mu$. The result is fairly standard, we
provide a proof for completeness' sake in Section~\ref{sec:construction}
(cf.~also \cite[Proof of~Lemma~12]{birkner2007survival}).

\begin{lemma}
  \label{lem:alpha_sequences}
  For every $\mu\in (1,e^2)$ there is $\varepsilon>0$ and
  $\kappa(\mu,\varepsilon)<1$ such that $\varphi_\mu $ is a contraction on
  $[\theta_{\mu}-\varepsilon, \theta_{\mu}+\varepsilon]$, that is,
  \begin{equation*}
    |\varphi_{\mu}(w_1)-\varphi_{\mu}(w_2)|
    \le \kappa(\mu,\varepsilon)|w_1-w_2|
    \qquad
    \text{for }w_1, w_2 \in
    [\theta_{\mu}-\varepsilon, \theta_{\mu}+\varepsilon].
  \end{equation*}
  Moreover, there exist a strictly increasing sequence
  $\alpha_m \uparrow \theta_\mu$ and a strictly decreasing sequence
  $\beta_m \downarrow \theta_\mu$ such that
  $\varphi_{\mu} ([\alpha_m, \beta_m]) \subseteq(\alpha_{m+1}, \beta_{m+1})$
  for all $m \in \N$. Furthermore, it is possible to choose $\alpha_1 >0$
  arbitrarily small and $\beta_1 > 1/e$.
\end{lemma}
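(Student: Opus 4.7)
The plan is to prove the contraction first by a direct derivative calculation at $\theta_\mu$, and then to construct the sequences $(\alpha_m,\beta_m)$ in two concatenated stages: a geometric stage inside the contraction neighborhood, preceded by a finite transient stage bringing the initial interval into it.

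First, I would differentiate $\varphi_\mu(w)=\mu w e^{-\mu w}$ to get $\varphi_\mu'(w) = \mu(1-\mu w)e^{-\mu w}$. Using $\mu\theta_\mu=\ln\mu$ and $e^{-\mu\theta_\mu}=1/\mu$ at the fixpoint, this gives $\varphi_\mu'(\theta_\mu) = 1-\ln\mu$, which lies in $(-1,1)$ since $\mu\in(1,e^2)$ forces $\ln\mu\in(0,2)$. By continuity of $\varphi_\mu'$, I pick $\varepsilon>0$ and $\kappa = \kappa(\mu,\varepsilon) \in(|1-\ln\mu|,1)$ with $|\varphi_\mu'(w)|\le\kappa$ on $J_\varepsilon := [\theta_\mu-\varepsilon,\theta_\mu+\varepsilon]$, and the mean-value theorem then delivers the contraction on $J_\varepsilon$.

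Next, for the inner stage I fix $\kappa' \in (\kappa,1)$ and, for $m \ge M$ (with $M$ chosen below by the transient stage), set $\alpha_m := \theta_\mu - \varepsilon(\kappa')^{m-M}$ and $\beta_m := \theta_\mu + \varepsilon(\kappa')^{m-M}$. The contraction bound then yields $\varphi_\mu([\alpha_m,\beta_m]) \subseteq [\theta_\mu-\kappa\varepsilon(\kappa')^{m-M},\theta_\mu+\kappa\varepsilon(\kappa')^{m-M}]$, strictly contained in $(\alpha_{m+1},\beta_{m+1})$ because $\kappa<\kappa'$, and the geometric definition immediately gives $\alpha_m\uparrow\theta_\mu$, $\beta_m\downarrow\theta_\mu$. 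For the transient stage, I start with $\alpha_1\in(0,\theta_\mu)$ arbitrarily small and $\beta_1>1/e$; since $\max\varphi_\mu = 1/e$ is attained at $1/\mu$ and $\varphi_\mu>0$ on $(0,\infty)$, the image $\varphi_\mu([\alpha_1,\beta_1])$ is a compact subinterval of $(0,1/e]\subset(\alpha_1,\beta_1)$. I then inductively define $\alpha_{m+1}$ slightly below $\min_{[\alpha_m,\beta_m]}\varphi_\mu$ and $\beta_{m+1}$ slightly above $\max_{[\alpha_m,\beta_m]}\varphi_\mu$, tuning the slacks so that $\alpha_m<\alpha_{m+1}<\theta_\mu<\beta_{m+1}<\beta_m$ and the strict inclusion $\varphi_\mu([\alpha_m,\beta_m])\subseteq(\alpha_{m+1},\beta_{m+1})$ holds. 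After finitely many steps the bracket enters $J_\varepsilon$, at which point I switch to the inner stage; the concatenation preserves monotonicity and the strict inclusions throughout.

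The hard part will be to justify the transient stage, namely to show that the brackets around $\theta_\mu$ actually tighten at each step and enter $J_\varepsilon$ after finitely many iterations. This is the statement that $\theta_\mu$ is a global attractor for the one-dimensional map $\varphi_\mu$ on $(0,\infty)$ in the pre-period-doubling regime $\mu<e^2$. A classical route uses that the Ricker-type map $\varphi_\mu$ has negative Schwarzian derivative, which rules out attracting periodic orbits of period $\ge 2$ for such $\mu$, see e.g.~\cite[Chapter~9]{thompson2002nonlinear}; a more elementary alternative tracks the image intervals directly, exploiting the unimodality of $\varphi_\mu$ together with the explicit bound $\max\varphi_\mu = 1/e$ to show that $[\min_{[\alpha_m,\beta_m]}\varphi_\mu,\max_{[\alpha_m,\beta_m]}\varphi_\mu]$ strictly improves the previous bracket around $\theta_\mu$. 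Compare \cite[Proof of Lemma~12]{birkner2007survival} for a similar lemma in a closely related model.
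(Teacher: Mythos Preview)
Your contraction argument is identical to the paper's. For the sequences, however, your two-stage construction (a geometric tail inside the contraction neighborhood, preceded by a finite transient stage) differs from the paper's approach, which instead gives explicit iteration formulas in three cases according to whether $\mu<e$, $\mu=e$, or $\mu\in(e,e^2)$ (i.e., whether $\theta_\mu$ lies on the increasing branch, at the peak, or on the decreasing branch of $\varphi_\mu$). In each case the paper defines $\alpha_{m+1},\beta_{m+1}$ as concrete convex combinations of the current endpoints and their images and checks monotonicity and nesting by hand; the most delicate case is $\mu\in(e,e^2)$, where the decreasing branch forces the upper and lower images to swap roles, and the paper's iteration $\alpha_{m+1}=(\varphi_\mu(\beta_m)+\alpha_m)/2$, $\beta_{m+1}=(\varphi_\mu(\alpha_m)+\beta_m)/2$ handles this directly.

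Your scheme is cleaner in structure but pushes the entire content into the global-attractor claim for the transient stage, which you correctly flag as the hard part and defer to Schwarzian-derivative machinery or to \cite[Lemma~12]{birkner2007survival}. Note that your transient step ``set $\alpha_{m+1}$ slightly below $\min_{[\alpha_m,\beta_m]}\varphi_\mu$ with $\alpha_{m+1}>\alpha_m$'' already presupposes $\min_{[\alpha_m,\beta_m]}\varphi_\mu>\alpha_m$; since this minimum equals $\min(\varphi_\mu(\alpha_m),\varphi_\mu(\beta_m))$ by unimodality, you need $\varphi_\mu(\beta_m)>\alpha_m$ at every step, which is exactly where a case analysis (or a compatibility constraint between $\alpha_1$ and $\beta_1$, as the paper imposes via $\varphi_\mu(\beta_1)\ge\varphi_\mu(\alpha_1)$) becomes unavoidable. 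So your strategy is sound, but making the transient stage rigorous ultimately requires either an argument equivalent to the paper's elementary case split or an appeal to a black-box dynamical-systems result; the paper opts for the former and is thereby self-contained.
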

We now take $b$ as in~\eqref{eqn:psi} and fix $\varepsilon $, $\kappa (\mu ,\varepsilon )<1$, as well as sequences $\alpha_m \uparrow \theta_\mu$,
$\beta_m \downarrow \theta_\mu$ as in Lemma~\ref{lem:alpha_sequences}, with $\alpha_1=b $ and $\beta_1>1/e$.
Then we choose $m_0$ such that $\beta_m - \alpha_m < \varepsilon$ for every
$m \ge m_0$. These choices will remain fixed
throughout the remainder of this section.

Next define the size of the blocks
\begin{equation}
  \label{def:Lblock}
  L'_\block := 2 \lceil R \log R \rceil, \qquad
  L_\block := c_{\sspace} L'_\block \quad
  \text{and} \quad T_\block := c_{\ttime} \lceil \log R \rceil,
\end{equation}
where $c_\ttime > -(d+1)/\log \kappa(\mu, \varepsilon)$ and $c_\sspace = 4(1 + c_\ttime)$ are integer constants. Remark~\ref{rem:constants} below explains these choices.
As in Section~\ref{sec:survival}, we
introduce $R'_\block = L'_\block/2$ and $R_\block = L_\block/2$ for the
radii of the blocks, and,  for $(z,t)$ in the sub-lattice
$\mathbb L:= L'_\block \mathbb Z^d \times T_\block \mathbb N_0$, we define
\begin{equation*}
  \Block(z,t) := \big\{ (x,n) \in \Z^d \times \N_0 :\norm{x - z} \le R_\block,
    \, t \le n \le t+ T_\block \big\}.
\end{equation*}
Further, let us specify the radius for which the strongest form of density control, alluded to in the above informal description, holds. More precisely set $c_\dens = 1+2 c_\ttime$ and $R_{\mathrm{dens}} := 2 c_{\mathrm{dens}} R'_\block$. Again, the discussion on the choice of $c_\dens$ is postponed to Remark~\ref{rem:constants}.

Recall the functions $\xi^-_n(x)$ defined in \eqref{eqn:xi_n}. We use
them here with 
$R_{\mathrm{init}} = R_{\mathrm{dens}} + m_0 R$ in \eqref{eqn:xitilde}.
For $k \in \{ 0, \dots, T_\block \}$ set
$R_\dens(k)= R_\dens+ k \lceil s R \rceil $, then let
\begin{align*}
  \zeta^-_k(x)
  := \begin{cases}
    \alpha_{m_0}  & \text{if } \norm{ x } \le R_{\mathrm{dens}}(k) \\
    \alpha_{m_0 -j+1}
    & \text{if } R_{\mathrm{dens}}(k) + (j-1) R < \norm{ x }
    \le R_{\mathrm{dens}}(k) + j R, \,1 \le j \le m_0 \\
    \xi^-_k(x) & \text{if } \norm{ x } > R_{\mathrm{dens}}(k) + m_0 R,
  \end{cases}
\end{align*}
and
\begin{align*}
  \zeta^+_k(x)
  := \begin{cases}
    \beta_{m_0}  & \text{if } \norm{ x } \le R_{\mathrm{dens}}(k) \\
    \beta_{m_0 -j+1} & \text{if } R_{\mathrm{dens}}(k) + (j-1) R
    < \norm{ x } \le R_{\mathrm{dens}}(k) + j R,
    \,1 \le j \le m_0 \\
    1 \vee \beta_1 & \text{if } \norm{ x } > R_{\mathrm{dens}}(k) + m_0 R.
  \end{cases}
\end{align*}
See also Figure~\ref{fig:blocks_complete_conv}.

\begin{figure}[b!]
  \centering
  \begin{tikzpicture}[scale = 0.83]

    % auxiliary lines and labels
    \draw[-stealth] (-0.3,0)--+(0:14.2);
    \draw[-stealth] (-1.5,0)--+(90:7);
    \draw (-1.5,0) --+(0:0.3);
    \foreach \j in {1,...,9}
    % label lines on x-axis
    \draw 	(\j*3/2,0)--+(-90:.1);
    \draw[stealth-stealth]	(9,-.2)--+(-180:3/2);
    \draw[stealth-stealth]	(9+1/100,-.9)--+(0:3*3/2-1/100);
    \draw[stealth-stealth]	(0,-.9) --+ (0:9-1/100);
    \draw (-1.5,3.5)--+(180:.1) node[left]{$\theta_{\mu}$};
    \draw[dashed, gray!40]
    (-1.5,6)--+ (0:.3)		    % beta_1
    (-.3,6)--+ (0:3+5*3/2-1.2)		    % beta_1
    (9,0)--+ (90:6);
    \draw[dashed,thick, black!20!red]	(-1.5,3.5) --+ (0:.3)
    (-.3,3.5) --+ (0:13.8);		% theta_mu
    \draw
    (-1.5,1)--+(180:.1) node[left]{$\alpha_1$}
    (-1.5,2) + (180:.1) node[left]{$\vdots$}
    (-1.5,3)--+(180:.1) node[left]{$\alpha_{m_0}$}
    (-1.5,6)--+(180:.1) node[left]{$\beta_{1}$}
    (-1.5,5) +(180:.1) node[left]{$\vdots$}
    (-1.5,4)--+(180:.1) node[left]{$\beta_{m_0}$}
    (0,0)--+(-90:.1) node[below]{$R_{\mathrm{dens}}(k)$}
    (-1.5,0)--+(-90:.1) node[below]{$0$};

    \node[below] at (4.5,-.9){$m_0 R$};
    \node[below] at (9-3/4,-.2){$R$};
    \node[below] at (9-3/4+3,-.9){$\lceil wR \rceil$};

    % \xi profile
    \draw[gray!40, dashed] (0,1 )--+ (0:9);
    \draw[gray!40, dashed] (-1.5,1 )--+ (0:0.3);
    \draw[gray!40, dashed] (-.3,1 )--+ (0:0.3);

    \foreach \j in {3,4}
    {
      \draw[orange,thick] (-1.5,\j )--+ (0:0.3);
      \draw[orange,thick] (-.3,\j )--+ (0:0.3);
    }
    %  dots
    \foreach \j in {-1,0,1}
    {
      \filldraw[gray!40] (-0.8+\j*0.2,1) circle (0.4pt);
      \filldraw[orange] (-0.8+\j*0.2,4) circle (0.4pt);
      \filldraw[orange] (-0.8+\j*0.2,3) circle (0.4pt);
      \filldraw[black] (-0.8+\j*0.2,0) circle (0.4pt);
      \filldraw[gray!40] (-0.8+\j*0.2,6) circle (0.4pt);
      \filldraw[black!20!red] (-0.8+\j*0.2,3.5) circle (0.4pt);
    }

    \draw[black!30!green, thick]	(9,1)--(13.5,0.1);
    \draw[black!30!green, thick]	(13.5,0.1)--+(-90:0.1);
    %\zeta profile
    \draw[orange, thick] 	(0,3) --+ (0:1)
    (0,4)--+ (0:1)
    (9,6)--+(0:4.5);

    % \alpha staircase
    \foreach \j in {0,...,4}
    \draw[orange, thick] 	(\j*3/2,3-\j/10-\j*\j/20-\j/20)--+(0:3/2)
    (3/2+\j*3/2,3-\j/10-\j*\j/20-\j/20)--+(-90:\j/10+2/10);
    \draw[orange, thick, ] (9-3/2,1)--+(0:3/2);
    % \delta staircase dashed orange

    %\beta staircase
    \foreach \j in {0,...,4}
    \draw[orange, thick] 	(\j*3/2,4+\j/10+\j*\j/20+\j/20)--+(0:3/2)
    (3/2+\j*3/2,4+\j/10+\j*\j/20+\j/20)--+(90:\j/10+2/10);
    \draw[orange, thick] (6+3/2,6)--+(0:5/2);
  \end{tikzpicture}
  \caption{The part of the deterministic comparison density
    profiles $\zeta_{k}^+$ and $\zeta_{k}^-$ (in orange and green) left from
    $R_{\mathrm{dens}}(k)$.
    In a \emph{good} block the densities of both $\eta^{(1)}$ and
    $\eta^{(2)}$ stay between the union of the orange lines and the green
    profile, which is glued to the bottom orange profile. The green line
    is the (suitably recentred and shifted) profile of $\xi^-$, which we
    introduced to prove survival.}
  \label{fig:blocks_complete_conv}
\end{figure}
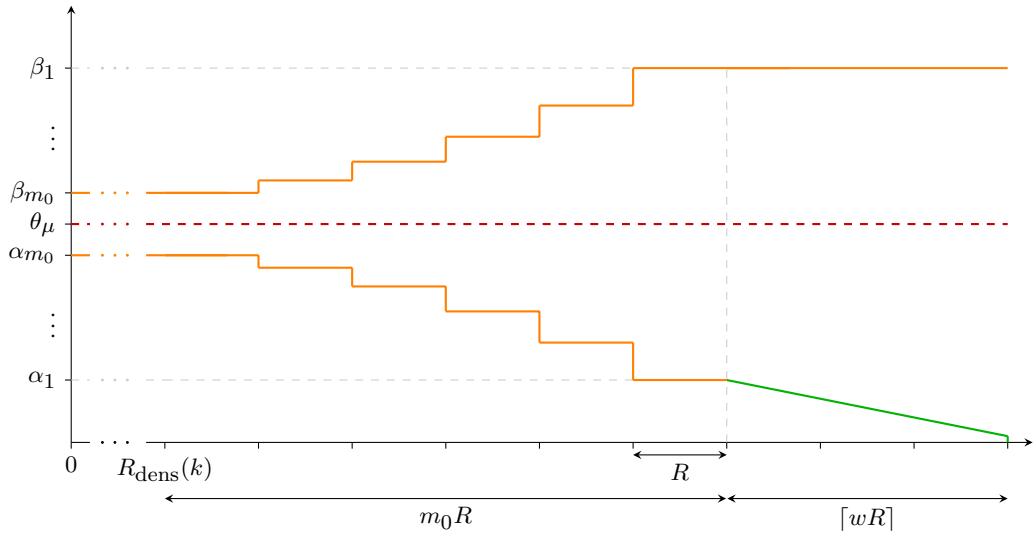

The functions $\zeta^-_k(\cdot) < \zeta^+_k(\cdot)$ are comparison density
profiles in the sense of Definition~\ref{def:density_profiles}, in particular, they
satisfy the following analogue of \eqref{eqn:xi.comp.bd1}.

\begin{lemma}
  \label{lem:dens.dev2}
  There exists $\delta > 0$ with the following property:
  For $k \in \N_0$ and any $(\zeta(x))_{x \in \Z^d} \in [0,1]^{\Z^d}$
  satisfying $\zeta^-_k \le \zeta \le \zeta^+_k$ on  $\Supp(\zeta^-_k)$,
  it follows that
  \begin{equation*}
    (1+\delta) \zeta^-_{k+1}(x) \le V_R^{-d}
    \sum_{y \in B_R(x)} \varphi_\mu\big( \zeta(y) \big)
    \le (1-\delta) \zeta^+_{k+1}(x) \quad
    \text{for all } x \in \Supp(\zeta^-_{k+1}).
  \end{equation*}
\end{lemma}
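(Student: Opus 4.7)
The plan is to decompose $\Supp(\zeta^-_{k+1})$ into three regions matching the piecewise definition of $\zeta^\pm_{k+1}$ and to treat each separately: (a) the inner plateau $\norm{x} \le R_{\mathrm{dens}}(k+1)$ where $\zeta^-_{k+1}(x) = \alpha_{m_0}$ and $\zeta^+_{k+1}(x) = \beta_{m_0}$; (b) the onion shells $R_{\mathrm{dens}}(k+1) + (j-1)R < \norm{x} \le R_{\mathrm{dens}}(k+1) + jR$ for some $j \in \{1,\dots,m_0\}$; and (c) the outer wave region $\norm{x} > R_{\mathrm{dens}}(k+1) + m_0 R$ where $\zeta^-_{k+1}(x) = \xi^-_{k+1}(x)$. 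In each case the strategy is to identify which time-$k$ shells are reached by $B_R(x)$, bound $\varphi_\mu(\zeta(y))$ on each using Lemma~\ref{lem:alpha_sequences} (or the survival construction from Section~\ref{sec:survival} in region (c)), and finally take $\delta$ to be the minimum of the constants produced. Throughout I use that $\lceil sR\rceil \le R$ (since $s \le 1$), so the ring structure shifts by less than one ring width per time step.

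For case (a), every $y \in B_R(x)$ satisfies $\norm{y} \le R_{\mathrm{dens}}(k) + \lceil sR\rceil + R \le R_{\mathrm{dens}}(k) + 2R$, so $y$ lies either in the inner plateau of $\zeta^\pm_k$ or in one of the first two onion shells; hence $\zeta(y) \in [\alpha_{m_0-1}, \beta_{m_0-1}]$. By Lemma~\ref{lem:alpha_sequences} and continuity of $\varphi_\mu$ on this compact set, $\varphi_\mu([\alpha_{m_0-1}, \beta_{m_0-1}])$ is a compact subset of the open interval $(\alpha_{m_0}, \beta_{m_0})$, giving a uniform strict gap on both sides that survives averaging over $y$. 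For case (c) the lower bound is already implicit in the survival argument: since $\varphi_\mu(w) \ge \widetilde{a}(w \wedge b) = \psi(w)$ and $\zeta(y) \ge \xi^-_k(y)$ on $\Supp(\xi^-_k)$, Lemma~\ref{lem:xi_n}(iii) yields $V_R^{-d}\sum_{y \in B_R(x)} \varphi_\mu(\zeta(y)) \ge \widetilde{a}\,\delta_R(x;\xi^-_k) \ge (\widetilde{a}/a^d)\,\xi^-_{k+1}(x)$, providing the factor $1+\delta_0$ with $\delta_0 = \widetilde{a}/a^d - 1 > 0$; the matching upper bound is trivial here because $\zeta^+_{k+1}(x) = 1 \vee \beta_1 \ge 1$ whereas $\varphi_\mu \le 1/e$.

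The substantive work is in case (b). For $x$ in shell $j^*$ of time $k+1$, the ball $B_R(x)$ meets at most four consecutive time-$k$ shells, namely $j^*-1, j^*, j^*+1$, and $j^*+2$; the worst configuration is when $x$ sits at the outer edge of its shell, in which case a union bound over coordinate directions of the sup-norm shows that the fraction of $y$'s falling into the outermost shell $j^*+2$ is at most $d\lceil sR\rceil/V_R = O(s)$, while the remaining $1-O(s)$ fraction lies in shells of index $\le j^*+1$. For $y$'s in shells $\le j^*$, Lemma~\ref{lem:alpha_sequences} gives $\varphi_\mu(\zeta(y)) \in (\alpha_{m_0-j^*+2}, \beta_{m_0-j^*+2})$, strictly inside $[\alpha_{m_0-j^*+1}, \beta_{m_0-j^*+1}]$; for $y$'s in shell $j^*+1$ we still get $\varphi_\mu(\zeta(y)) \in (\alpha_{m_0-j^*+1}, \beta_{m_0-j^*+1})$; only the $O(s)$ fraction in shell $j^*+2$ delivers the strictly weaker bound in terms of $\alpha_{m_0-j^*}$ and $\beta_{m_0-j^*}$. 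A weighted averaging argument, relying on the fact that the contraction gaps $\alpha_{m+1}/\alpha_m - 1$ and $1 - \beta_{m+1}/\beta_m$ are bounded away from $0$ uniformly in the finite range $m \in \{1,\dots,m_0\}$, shows that both the weighted mean lower and upper bounds give the required factors $1+\delta$ and $1-\delta$, provided $s$ is small; this is arranged by choosing the growth factor $a$ in Lemma~\ref{lem:f1dprofilelbd} close enough to $1$.

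The main obstacle is precisely this bookkeeping in case (b): verifying that the $O(s)$-fraction contribution from shell $j^*+2$ does not overwhelm the strict contraction gain that inner shells provide. Finiteness of $m_0$ makes the uniformity of the contraction gaps automatic, and the freedom to shrink $s$ via the choice of $a$ in Lemma~\ref{lem:f1dprofilelbd} absorbs the $O(s)$ perturbation, letting us fix a single $\delta > 0$ that works across all three cases simultaneously.
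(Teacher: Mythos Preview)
Your approach is essentially the paper's: handle the outer wave region via the survival machinery (Lemma~\ref{lem:xi_n} and the function $\psi$ from Section~\ref{sec:survival}) and the onion shells via the strict contraction $\varphi_\mu([\alpha_m,\beta_m])\subseteq(\alpha_{m+1},\beta_{m+1})$ from Lemma~\ref{lem:alpha_sequences}, then take $\delta$ as the minimum over finitely many cases. The paper's proof is in fact considerably sketchier than yours: it asserts that for $x$ in the $\alpha_j$-shell at time $k+1$ one has $\zeta_k^-(y)\ge\alpha_{j-1}$ for \emph{all} $y\in B_R(x)$, which strictly speaking overlooks exactly the $O(s)$ leakage into the $\alpha_{j-2}$-shell that you correctly isolate. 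Your handling of this via a weighted average against the uniform contraction gaps over $m\in\{1,\dots,m_0\}$ is the right fix.

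The one point worth flagging is your phrase ``this is arranged by choosing the growth factor $a$ in Lemma~\ref{lem:f1dprofilelbd} close enough to $1$''. In the paper's logical order, $a$ (and hence $s$, $w$, $\varepsilon_0$) is fixed already in Section~\ref{sec:survival}, and the profiles $\zeta_k^\pm$ in Section~\ref{sec:complete_convergence} are built on top of that choice. So your argument should really be read as an additional constraint imposed at the moment $a$ is selected: besides $a^d<\widetilde a$, one also needs $a$ close enough to $1$ that the resulting $s$ satisfies, say, $(ds/2)\max_{2\le j\le m_0}(\alpha_j-\alpha_{j-1}) < \tfrac12((1-s)/2)^d \min_{1\le j\le m_0-1}(\alpha_{j+1}-\alpha_j)$. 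This is entirely legitimate --- all the relevant objects $(\alpha_m)$, $(\beta_m)$, $m_0$, $\widetilde a$, $b$ depend only on $\mu$, and $a$ can be chosen afterwards --- but it is a retroactive tightening of an upstream parameter rather than something the lemma delivers for the profiles as literally handed to you. Making this dependency explicit would sharpen the write-up.
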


\begin{proof}
  For $x$ such that $\zeta_k^{-}(x)$ agrees with the previously defined
  profile $\xi_k^-(x)$ the lower bound in the statement follows easily from
  Lemma~\ref{lem:xi_n}.

  Let $x \in \Supp(\zeta^-_{k+1})$ with $\zeta_{k+1}^-(x) = \alpha_{j}$,
  for some $2\le j \le m_0$. Then
  \begin{align*}
    \zeta_k^-(y) =  \alpha_{j} \text{ if } y \in B_R(x) \cap \Upsilon_{R}
    \qquad \text{and} \qquad
    \zeta_k^-(y) \ge \alpha_{j-1}
    \text{ if } y \in B_R(x) \cap \Upsilon_{R}^c,
  \end{align*}
  where
  $\Upsilon_{R} := \{ z:R_\dens(k  )+(j-1)R \le \norm{ z } \le R_\dens(k)+jR \}$. Note that $\abs{\Upsilon_R\cap B_R(x)}\ge c V_R^d$ for some $c>0$, uniformly in the $x$ we consider here.
  The properties of sequences $\alpha_m, \beta_m$ from
  Lemma~\ref{lem:alpha_sequences} then imply that there exists $\delta>0$
  (depending on  $(\alpha_m)_{m \le m_0}$,
    $(\beta_m)_{m \le m_0}$ and $d$) such that
  \begin{align*}
    V_R^{-d} \sum_{y \in B_R(x)} \varphi_{\mu}(\zeta(y))
    &\ge \alpha_{j+1} \abs{B_R(x) \cap \Upsilon_{R}}  V_R^{-d}
    + \alpha_{j} \abs{ B_R(x) \cap \Upsilon_{R}^c}
    V_R^{-d} \ge \alpha_{j}(1+\delta)
  \end{align*}
  and similarly for the upper bound.  This completes the claim for the
  remaining parts of the profile (those in orange in
    Figure~\ref{fig:blocks_complete_conv}).
\end{proof}
We proceed in a similar fashion as in
Section~\ref{sec:survival} and introduce a new notion of well-started and of good blocks. 
These updated definitions involve two
copies $\eta^{(1)}$, $\eta^{(2)}$ of the system. A well-started block is now determined by the local density of
the true system being controlled by the $\zeta^+_k, \zeta^-_k$ profiles, in addition to which we require agreement of the true processes in the central part of the block. 
\begin{definition}
  \label{def:well_started}
  A $\Block(z,t)$ based at $(z,t) \in \mathbb{L}$ is \emph{well-started}
  if
  \begin{align}
    \label{eq:well-started2}
    \delta_R(x; \eta_t^{(i)}) \in \big[ \zeta^-_0(x-z), \zeta^+_0(x-z) \big]
    \quad \text{for all } x \in  z+\Supp(\zeta^-_0), \; i=1,2
  \end{align}
  and
  \begin{align}
    \label{eqn:well-started2a}
    \eta_t^{(1)}(x) = \eta_t^{(2)}(x) \quad \text{for all } x \in B_{R'_\block}(z).
  \end{align}
\end{definition}
Again as in Section~\ref{sec:survival} we use this as the starting point off of which we base our notion of goodness as the spreading of the control given by well-startedness to
neighbouring regions.
\begin{definition}
  \label{def:good_block}
  We call a $\Block(z,t)$ based at $(z,t) \in \mathbb{L}$ \emph{good} if
  \begin{enumerate}[(i)]
    \item $\Block(z,t)$ is \emph{well-started},

    \item $\eta_{t+T_\block}^{(1)}(x) = \eta_{t+T_\block}^{(2)}(x)$  for
    $\norm{x-z} \le 3 R'_\block$,

    \item $\big(\eta^{(1)}_{t+T_\block}, \eta^{(2)}_{t+T_\block}\big)$
    satisfy \eqref{eq:well-started2} centred at $z + L'_\block e$ for all $e \in B_1(0)$, that is
    \begin{equation*}
    \delta_R(x; \eta_{t+t_\block}^{(i)}) \in \big[ \zeta^-_0(x-z-L'_\block e), \zeta^+_0(x-z-L'_\block e) \big]
    \end{equation*}
    for all $x \in  z+L'_\block e+\Supp(\zeta^-_0)$, for all $e \in B_1(0)$, for $i=1,2$.
  \end{enumerate}
\end{definition}
Property (iii) implies that if $\Block(z,t)$ is good, then
$\Block(z+ L'_\block e,t+T_\block)$ will be well-started for all
$\norm{e} \le 1$.

\begin{remark}
\label{rem:constants}
Let us now comment on our choice of the constants $c_\sspace$, $c_\dens$, $c_\ttime$. It is instructive to first give $c_\sspace$ as a function of $c_\dens$, then $c_\dens$ as a function of $c_\ttime$, and ultimately fixing $c_\ttime$ large enough.

\begin{enumerate}[(i)]
    \item Note first that $\zeta_0^\pm$ are constant on a box of size $R_\dens$ (which is of order $R \log R$) and then increase (resp. decrease) on boxes with length of order $R$. It follows readily that $\Supp(\zeta_0^-)\subseteq B_{2R_{\mathrm{dens}}}(0)$ for large enough $R$. Therefore $2c_\dens$ blocks of size $L'_\block$ fully cover the spatial region determining whether a block is well-started. Furthermore, we need to to provide additional space for the well-started configurations to spread to in time $T_\block$. This warrants the choice $c_{\sspace} = 2 c_{\dens} +2$. Note in this context that a
much smaller $L_\block$ would suffice, but defining it to be a multiple
of $L'_\block$ gives a more convenient notation.

    \item In order to have a \emph{well-started} block at $(z,t)$ for which property~\eqref{eqn:well-started2a} spreads to a region of
radius $3R'_\block$ around $z$ in time $T_\block$, the region of space
around $z$ for which the densities of $\eta^{(1)}, \eta^{(2)}$ are near
$\theta_\mu$ must be large enough.
As will be seen later on (see Section~\ref{sec:proofdetails.coupl}) this is due to the crucial role that the contraction property of Lemma~\ref{lem:alpha_sequences} plays in the expansion of the coupling and translates loosely to 
$R_{\mathrm{dens}}$ being large enough, namely
\begin{equation*}
  R_{\mathrm{dens}} > R'_\block + T_{\block} \lceil s R \rceil + T_{\block}R.
\end{equation*}
This can also be seen as an incentive for taking $T_\block$ to be of order $\log R$ and $R_\dens$ to be of order $R \log R$. Further it shows that  $c_{\mathrm{dens}}$ needs to be chosen suitably large; it suffices to take $c_{\mathrm{dens}} = 1+2 c_{\mathrm{time}}$.

    \item 
Assume that on the event that a block at $(z,t)$ is well started, property (ii) of Definition~\ref{def:good_block} does not hold, i.e.\ there is a site at the top of the block at which $\eta^{(1)}$ and $\eta^{(2)}$ disagree. As will be seen in Section~\ref{sec:proofdetails.coupl}, the probability of the two processes disagreeing at a site (in a well-started block) decays by a factor of $\kappa(\mu,\varepsilon)$ at each time step, when tracing the unsuccessful coupling backwards in time though the block. By a union bound, it follows that the probability that a well-started block at $(z,t)$ does not satisfy (ii), is bounded by $\kappa(\mu,\varepsilon)^{T_\block} $
multiplied by the number of sites that are within distance $R'_\block + T_\block \lceil sR \rceil$ of $z$. 
For this probability to decay in $R$, the constant $c_\ttime$ must satisfy $ c_{\mathrm{time}} > -(d+1)/\log \kappa(\mu, \varepsilon)$.
\end{enumerate}
\end{remark}
In order to set up comparison with oriented percolation, in the same
fashion as in Section~\ref{sec:survival}, we need to show that the \emph{good} blocks have high density and that the block
dependencies have finite range that does not depend on $R$. To this end,
note first that the event $\{ \Block(z,t) \text{ is good} \}$ depends (only) on
$\{ \eta^{(i)}_t(x), x \in B_{R_\block}(z), i=1,2 \}$ and
$\{ U(y,t+k) : y \in B_{3 R_\block}(z), k=1,2,\dots,T_\block\}$.

\begin{lemma}
  \label{lem:welltogood}
  For $(z,t) \in \mathbb{L}$,
  \begin{align*}
    \P\big( \text{\rm $\Block(z,t)$ is good} \,\big|\, \mathcal{F}_t \big)
    \ge
    \ind_{\{ \text{\rm $\Block(z,t)$ is well-started}\}}
    \big(1-q(R,\mu)\big)
  \end{align*}
  with $q(R,\mu)\to 0$ as $R \to \infty$.
\end{lemma}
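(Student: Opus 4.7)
The plan is to combine the density concentration from Lemma~\ref{lem:dens.dev} with the contraction of $\varphi_\mu$ near $\theta_\mu$ from Lemma~\ref{lem:alpha_sequences}, in the spirit of the block construction in \cite[Sec.~5]{birkner2007survival}. Two things have to happen: (a) the densities of both $\eta^{(1)}$ and $\eta^{(2)}$ must stay trapped between the profiles $\zeta_k^\pm$ throughout the block (this will give property~(iii) of Definition~\ref{def:good_block} automatically), and (b) on that event the two processes must successfully couple on the central region $B_{3R'_\block}(z)$ (this gives property~(ii)).

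For (a), I would introduce the density control event
\begin{equation*}
  E_\dens := \bigcap_{i=1,2}\bigcap_{k=0}^{T_\block}\bigcap_{x \in z + \Supp(\zeta^-_k)}
  \bigl\{\zeta^-_k(x - z) \le \delta_R(x; \eta^{(i)}_{t+k}) \le \zeta^+_k(x - z)\bigr\}.
\end{equation*}
Conditional on well-startedness, Lemma~\ref{lem:dens.dev}(a) together with the propagation estimate of Lemma~\ref{lem:dens.dev2} and a union bound over the at most polynomially-in-$R$ many space-time sites yield $\P(E_\dens^c \mid \mathcal{F}_t, \text{well-started}) \le C R^d(\log R)^{d+1}\, e^{-c V_R^d}$, which decays super-exponentially in $R$. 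Moreover, on $E_\dens$ condition~(iii) holds by a purely geometric verification: since $R_\dens(T_\block) \ge R_\dens + L'_\block$ (because $T_\block \lceil sR\rceil \ge L'_\block$ for $R$ large) and the analogous inclusions hold for the intermediate level sets $\alpha_{m_0-j+1},\beta_{m_0-j+1}$, one checks that for $\|e\|\le 1$ the profile $\zeta^-_{T_\block}(\cdot - z)$ dominates the shifted profile $\zeta^-_0(\cdot - z - L'_\block e)$ on its support, with the mirror bound for $\zeta^+$.

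For (b), let $\Delta_n(x) := \mathbbm{1}\{\eta^{(1)}_n(x) \ne \eta^{(2)}_n(x)\}$. Since both copies use the same driving $U(x,n)$'s, \eqref{eqn:coupling} gives
\begin{equation*}
  \E[\Delta_{n+1}(x) \mid \mathcal{F}_n]
  = \bigl|\varphi_\mu(\delta_R(x; \eta^{(1)}_n)) - \varphi_\mu(\delta_R(x; \eta^{(2)}_n))\bigr|.
\end{equation*}
On $E_\dens$, for every $x \in B_{R_\dens(k)}(z)$ both densities lie in $[\alpha_{m_0},\beta_{m_0}] \subset (\theta_\mu - \varepsilon, \theta_\mu + \varepsilon)$, so Lemma~\ref{lem:alpha_sequences} bounds the above by $\kappa(\mu,\varepsilon)\, V_R^{-d}\sum_{y \in B_R(x)} \Delta_n(y)$. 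Setting $g_k(x) := \E[\Delta_{t+k}(x)\,\mathbbm{1}_{E_\dens} \mid \mathcal{F}_t]$ and iterating this recursion --- the backward trace never leaves the contracting regime because $3R'_\block + T_\block R \le R_\dens$ by the choice $c_\dens = 1 + 2 c_\ttime$ --- together with $g_0 \equiv 0$ on $B_{R'_\block}(z)$ by well-startedness and the trivial bound $g_0 \le 1$ elsewhere, I obtain $g_{T_\block}(x) \le \kappa^{T_\block}$ for every $x \in B_{3R'_\block}(z)$.

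A final union bound and Markov's inequality then yield
\begin{equation*}
  \P\bigl(\text{(ii) fails},\, E_\dens \,\big|\, \mathcal{F}_t, \text{well-started}\bigr)
  \le |B_{3R'_\block}(z)|\, \kappa^{T_\block}
  \le C\,(R\log R)^d\, R^{c_\ttime \log \kappa(\mu,\varepsilon)},
\end{equation*}
which is $O(R^{-1}(\log R)^d) \to 0$ because $c_\ttime > -(d+1)/\log \kappa(\mu,\varepsilon)$. Combined with $\P(E_\dens^c) \to 0$, this gives the claimed $q(R,\mu) \to 0$. The main obstacle is the simultaneous tuning of the three block scales $R'_\block$, $R_\dens$, $T_\block$ to satisfy three competing constraints at once: $R_\dens \gg R'_\block + T_\block R$ (so the backward coupling recursion stays in the contracting region), $T_\block(-\log \kappa) > d\log R$ (so the contraction beats the spatial volume), and $T_\block \lceil sR\rceil \ge L'_\block$ (so property~(iii) propagates to adjacent blocks) --- exactly the arithmetic recorded in Remark~\ref{rem:constants}.
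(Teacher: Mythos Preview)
Your approach is essentially the paper's: density control via Lemmas~\ref{lem:dens.dev} and~\ref{lem:dens.dev2}, then iterated contraction of $\varphi_\mu$ near $\theta_\mu$ to shrink the discrepancy by a factor $\kappa$ per step, finished by a union bound. The one technical slip is that your recursion for $g_k$ uses $\ind_{E_\dens}$, which is not $\mathcal{F}_{t+k-1}$-measurable and hence cannot be pulled through the inner conditional expectation; the paper handles this by replacing your single event with a nested family of ``pyramid'' events $\Psi_k \in \mathcal{F}_{t+k}$ (with $\Psi_k \subset \Psi_{k-1}$), so that at each step one may drop to the coarser indicator before conditioning --- a routine fix that leaves your argument intact.
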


\noindent
See Section~\ref{sec:proofdetails.coupl} for the proof.

Equipped with Lemma~\ref{lem:welltogood} we can repeat the comparison
construction from Section~\ref{sec:survival} and obtain the analogues of
\eqref{eq:Yzt} and \eqref{eq:Ydominates} in our context. That is we define $Y(z,t) = \ind_{\{\Block(z,t) \text{ is good}\}}$, and then 
couple $(\eta^{(1)}, \eta^{(2)})$
with a (high density) i.i.d.\
Bernoulli field $(\widetilde Y(z,t))_{(z,t) \in \mathbb{L}}$ such that
\begin{align*}
  Y(z,t)
  \ge \ind_{\{\Block(z,t) \text{ is well-started}\}} \widetilde Y(z,t)
  \quad \text{for all } (z,t) \in \mathbb{L}
\end{align*}
and $p(R) = \P(\widetilde Y(z,t)=1) \to 1$ as $R\to\infty$.

This shows that the density of good blocks (and thus also the density of
  space-time sites where $\eta^{(1)}$ and $\eta^{(2)}$ agree) will be
high. In order to conclude that in fact $\eta^{(1)}$ and $\eta^{(2)}$
will agree a.s.\ from some time on in a growing space-time region, we
invoke the fact that ``dry'' ($\widehat{=}$ ``uncoupled'') clusters of blocks
do not percolate when $p(R)$ is close to $1$. More precisely we set
\begin{equation*}
   C_0 := \left\{ (z,t)\in \mathbb{L}  : \;
      \parbox{0.6\linewidth}{There exists a path $(z_0,0),(z_1,T_\block),\dots, (z_t,t)$ in $\mathbb{L}$ with $z_0 = 0, z_t = z$ such that $\norm{z_i-z_{i-1}} \le L'_\block$  and $\widetilde Y(z_i,i T_\block) = 1$ for $i \in \{1,\dots,t/T_\block\}$} \: 
      \right\}
\end{equation*}
to be the cluster of sites which are connected to the origin by an open path in the Bernoulli field $(\widetilde Y(z,t))_{(z,t) \in \mathbb{L}}$.  Further we say that a space-time point $(z,t) \in \mathbb{L}$ is $C_0$-\emph{exposed} if there is an arbitrary path from it to the zero-time slice, which entirely avoids $C_0$, i.e.\ if there is a path 
$(z_0,0),\dots,(z_t,t)$ in $\mathbb{L}$ with $z_t = z$ such that $\norm{z_k-z_{k-1}}\le L'_\block$ and $(z_k,k T_\block) \notin C_0, k =1,\dots, t/T_\block$.

It follows from \cite[Section~3]{durrett1992multicolor} that there is a
truncated cone originating from the origin in which there exist no $C_0$-exposed
sites. The exact statement we are interested in is a direct reformulation
of \cite[Lemma~14]{birkner2007survival}.

\begin{lemma}[{\cite[Lemma~14]{birkner2007survival}}]
  \label{lem:birkner_survival}
  If $p(R)$ is sufficiently close to $1$, then there is a positive
  constant $c>0$ and an almost surely finite random time $\tau$, such
  that conditioned on $\{{|C_0| = \infty}\}$ there are no $C_0$-exposed
  sites in $\{(z,t) \in \mathbb{L} : \norm{z} \le ct, t\ge \tau \}$.
\end{lemma}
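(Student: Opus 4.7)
The plan is to reduce the statement to a standard result about supercritical oriented site percolation on $\mathbb L$ with parameter $p(R)$, close to $1$ for large $R$. Once this reduction is set up, the argument essentially follows \cite[Section~3]{durrett1992multicolor}, and we only need to adapt its conclusions to the language of $C_0$-exposed sites.

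First I would record the basic ingredients. Since $(\widetilde Y(z,t))_{(z,t)\in\mathbb L}$ is i.i.d.\ Bernoulli$(p(R))$, for $R$ large it defines supercritical oriented site percolation on the coarse-grained lattice, with $C_0$ playing the role of the cluster of the origin. Standard facts then give: (i) $\P(\abs{C_0}=\infty)>0$, (ii) a shape theorem, namely that on $\{\abs{C_0}=\infty\}$ there exists $c_1>0$ and an a.s.\ finite random time $\tau_1$ such that every coarse-grained space-time site $(z,t)\in\mathbb L$ with $\norm z\le c_1 t$ and $t\ge\tau_1$ satisfies $(z,t)\in C_0$ up to a sparse set, and (iii) exponential decay of dual/avoiding clusters: when $p(R)$ is close enough to $1$, the probability that any fixed coarse-grained site lies in a connected component of $\mathbb L\setminus C_0$ of diameter $\ge n$ decays exponentially in $n$.

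Next I would translate the $C_0$-exposed notion into a contour condition. A site $(z,t)$ is $C_0$-exposed precisely when there is a coarse-grained down-path $(z_0,0),\dots,(z_t,t)=(z,t)$ with consecutive increments of norm at most $L'_\block$ that avoids $C_0$ entirely. Such a path is a connected ``dual'' object of length $t$. By (iii), when $p(R)$ is sufficiently close to $1$ a Peierls-type bound shows that, for any fixed $(z,t)$, the probability of the existence of such an avoiding path of length at least $t$ is bounded by $C e^{-\gamma t}$ for some $\gamma =\gamma(p(R))>0$, uniformly in $z$.

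Finally, I would combine this with the shape theorem in (ii) to produce the cone. Choose $c\in(0,c_1)$. By a Borel--Cantelli argument summing the exponential bound over all sites in the cone $\{(z,t)\in\mathbb L:\norm z\le c t\}$ (whose cardinality in each time slice grows only polynomially), only finitely many of these sites can be $C_0$-exposed almost surely on $\{\abs{C_0}=\infty\}$. Setting $\tau$ to be the maximum of $\tau_1$ and one more than the last time at which such an exposed site appears gives the claimed cone, with $\tau$ a.s.\ finite.

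The main obstacle is controlling $C_0$-exposed sites rather than merely the shape of $C_0$: one must rule out thin ``fingers'' of the complement that could reach deep into the cone. This is precisely the content of step (iii) above, and it is here that taking $p(R)$ close to $1$ (equivalently, $R$ large) is essential; it is the same input that drives the proof in \cite{durrett1992multicolor} and its reformulation in \cite[Lemma~14]{birkner2007survival}, from which our statement may alternatively be quoted verbatim.
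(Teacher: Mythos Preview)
The paper does not give its own proof of this lemma; it simply quotes \cite[Lemma~14]{birkner2007survival}, which in turn relies on \cite[Section~3]{durrett1992multicolor}. Your sketch has the right overall architecture (shape theorem plus contour bound plus Borel--Cantelli), and you correctly note at the end that the result can be quoted verbatim. However, step~(iii) as written is not correct and conceals the actual work.

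The claim that a fixed site lies in a connected component of $\mathbb L\setminus C_0$ of diameter $\ge n$ with probability $\le Ce^{-\gamma n}$ is false: $C_0$ is at best a cone, so $\mathbb L\setminus C_0$ has an infinite connected component containing most of $\mathbb L$. Similarly, the Peierls bound ``uniformly in $z$'' cannot hold, since for $\norm z\gg t$ the site $(z,t)$ is $C_0$-exposed with probability near $1$. The underlying issue is that a Peierls count on paths avoiding $C_0$ is not directly available, because $C_0$ and the avoiding path live on the same randomness. The way \cite{durrett1992multicolor} and \cite{birkner2007survival} actually handle this is to first run the Peierls argument for the process $W$ started from the \emph{fully occupied} configuration at time~$0$: clusters of sites not in $W$ have exponentially decaying diameter when $p(R)$ is close to $1$, and this is a statement about the i.i.d.\ field alone. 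One then uses the Durrett--Griffeath coupling to show that on $\{\abs{C_0}=\infty\}$, after an a.s.\ finite time, $C_0$ and $W$ coincide inside a cone $\{\norm z\le c_1 t\}$. Any $C_0$-avoiding path that stays in this cone is therefore a $W$-avoiding path, to which the exponential bound applies; and a path starting in the smaller cone $\{\norm z\le ct\}$, $c<c_1$, needs order $t$ steps to exit the larger one. With these corrections your Borel--Cantelli step goes through.
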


For large enough $R$ the Bernoulli field
$(\widetilde Y(z,t))_{(z,t) \in \mathbb{L}}$ contains an infinite
cluster of open sites with probability one. Similarly to Section~\ref{sec:survival}, because a \emph{good} block will be created with positive probability from any non-trivial initial condition, we can assume without loss of
generality that this cluster contains the origin and that the block at the origin is good.

Lemma~\ref{lem:birkner_survival} together with Lemma~\ref{lem:welltogood} imply that for sufficiently large $R$,  on
$\{ \abs{ C_0}  = \infty \}$ there is a (random) time $\tau>0$ and a constant $c>0$ such that no sites in $ \{ (z, t) \in \mathbb{L} : \norm{z}\le ct,t\ge \tau\}$ are  $C_0$-exposed.
We show that this implies that
$\eta^{(1)}$ agrees with $\eta^{(2)}$ on the space-time cone $A := \{ (z,t) \in \Z^d\times \N : \norm{z}\le c(t-\tau), \, t\ge \tau \}$ centered at $(0,\tau)$. 
Indeed, assume to the contrary that there exists  $(z,s) \in A$ such that
$\eta_s^{(1)}(z) \neq \eta_s^{(2)}(z)$. Then we can find a path
$(z,s), (x_{s-1},{s-1}),\dots,(x_0,0)$ in $\Z^d\times \N_0$ such
that $x_u \in B_{R}(x_{u+1})$ and 
$\eta_u^{(1)}(x_u) \neq \eta_u^{(2)}(x_u)$ for all $0\le u \le s-1$. By
disregarding all $u$'s which are not a multiple of $T = T_\block$, there
exists some integer $k$ and a sub-path
$(z,s),(x_{kT},kT),\dots, (x_0,0)$ in $\Z^d \times \N_0$ ``backwards
in time''. Assume without loss of generality that $s$ is a multiple of $T$
and associate to the sub-path the nearest neighbour path
$\big((Z, k+1),(X_k,k),\dots, (X_0,0)\big) \subseteq \mathbb{L}$
where $Z,X_k \in L'_{\mathrm{Block}}\Z^d$ are the respective closest grid-points to
$z$ and $x_{kT}$ in the coarse-grained lattice. In particular
$\norm{X_k-x_{kT}}\le R'_{\mathrm{Block}}$ for $k = 0,\dots, s$. By
definition $\widetilde Y(X_k,kT) =0$ for $k = 0,\dots, s$, whence
$(Z,s)$ is a $C_0$-exposed site, contradicting
Lemma~\ref{lem:birkner_survival} and yielding that in fact $\eta_s^{(1)}(z) = \eta_s^{(2)}(z)$. As $(z,s) \in A$ was chosen arbitrarily, the claim of Theorem~\ref{thm:coupling} follows with $T^{\mathrm{coupl}} = \tau$ and $a = a(R,\mu,d) = c$.

\subsection{Proof of Lemma \ref{lem:welltogood}}
\label{sec:proofdetails.coupl}

We now show the key step in proving Theorem~\ref{thm:coupling}, which is showing that
the coupled region in the \emph{well-started} configuration of a \emph{good}
block expands to the neighbouring sites with high probability. 

\begin{proof}[Proof of Lemma \ref{lem:welltogood}] In order to keep the notation lighter we only prove the lemma for a block
centred at the origin at time $0$. That is, we show that for some $q=q(R,\mu)\to 0$ as $R \to \infty$,
\begin{equation}
\label{eq:welltogood_zero}
\P\big( \text{\rm $\Block(0,0)$ is good} \,\big|\, \mathcal{F}_0 \big)
    \ge
    \ind_{\{ \text{\rm $\Block(0,0)$ is well-started}\}}
    \big(1-q(R,\mu)\big)
\end{equation}
Shifting the block yields the desired
property for blocks centred at arbitrary space-time sites. Note that we still condition on $\mathcal{F}_0$, as we allow for possibly random initial configurations $\eta_0^{(i)}, i = 1,2$. As was already 
anticipated in Remark~\ref{rem:constants}, in order to see the spreading of the coupling after $T_{\block}$ steps, we need a
large number of sites within distance $R_\dens$ of the origin for which the
densities of both $\eta^{(1)}, \eta^{(2)}$ are close to $\theta_{\mu}$. 
This is made precise by the following auxiliary events, where the
densities have the prescribed behaviour on balls whose radii decrease by
$R$ at each time step.

Recall that $T_\block = c_\ttime \lceil\log R\rceil$ and write
$R'(k) := R'_\block + k \lceil sR \rceil$. For $n \in \N$ let
\begin{align*}
  \Psi_n := \Big\{ |\delta_R(x;\eta_j^{(i)})-\theta_{\mu}|<\varepsilon,
    \, \forall x : \norm{ x } \le  R'(n) + (n-j)R, \,
    \forall j \in \{ 1, \dots, n \}, \, i \in \{ 1,2 \} \Big\}.
\end{align*}
(Recall also that $\varepsilon$ was chosen at the beginning of
  Section~\ref{sect:proof:thm:coupling}, above \eqref{def:Lblock}.)

\begin{figure}
  \centering
  \begin{tikzpicture}[scale = 0.6]
    % grid
    \draw[xstep=0.3cm,ystep=0.2,black!5] (-10.7,-.5) grid (10.7,4.5);

    % labels
    \draw[black] (0,-0.1)--+(-90:0.2);
    \node[below] at (0,-0.3){$0$};
    \draw[black] (0,4.1)--+(-90:0.2);

    \draw[stealth-stealth] (0,4.25)--(2.1,4.25);
    \node[above] at (1,4.35) {$R'(n)$};

    \draw[stealth-stealth] (0,-1.2)--(2.1+20*0.3,-1.2);
    \node[below] at (1+10*0.2,-1.2) {$R'(n)+(n-1)R$};

    % top/bottom
    \draw[black] (-2.1,4)--(2.1,4);
    \draw[black]   (-2.1-20*0.3,4-21*0.2)--(2.1+20*0.3,4-21*0.2);

    % pyramid
    \foreach \j in {0,...,19}
    {
      \draw[black] (2.1+\j*0.3,4-\j*0.2)--+(-90:0.2)
      (2.1+\j*0.3,4-\j*0.2-0.2)--+(0:0.3);
      \draw[black] (-2.1-\j*0.3,4-\j*0.2)--+(-90:0.2)
      (-2.1-\j*0.3,4-\j*0.2-0.2)--+(-180:0.3);
    }
    \draw[black] (2.1+20*0.3,4-20*0.2)--+(-90:0.2);
    \draw[black] (-2.1-20*0.3,4-20*0.2)--+(-90:0.2);
  \end{tikzpicture}

  \caption{The event $\Psi_n$ occurs if the local density of
    $\eta^{(i)},i = 1,2$ is within $\varepsilon$ distance of the fixed
    point $\theta_{\mu}$ for all space-time points in the above pyramid.
    For convenience of presentation the spatial axis in the sketch is
    scaled by $R$, while the temporal axis is not scaled.}
  \label{fig:pyramid}
\end{figure}
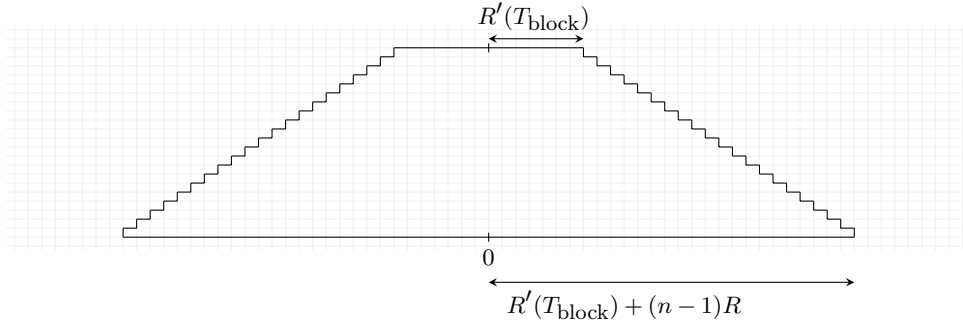

Note that in a \emph{well-started} configuration around the origin we
have $|\delta_R(x;\eta_0^{(i)})-\theta_{\mu}|<\varepsilon$  for every $x$
such that $\norm{ x } \le R'({T_\block}) + T_\block R$ (in fact, this holds for all $x$ within distance $R_\dens$ from the origin and $R_\dens\ge R'(T_\block)+T_\block R$). The sites, where the densities of $\eta^{(1)}, \eta^{(2)}$ are close to $\theta_{\mu}$ due to the well-startedness, encompass the entire $n=0$ (bottom) level of the space-time pyramid $\Psi_{T_\block}$, see
also Figure~\ref{fig:pyramid}. Due to this the event $\Psi_{T_\block}$ holds with high probability. Indeed, by defining the events $A_0 := \emptyset$ and
  \begin{equation*}
    A_j := \big\{ \exists z \in B_{R'(T_\block)+ (T_\block-j)R}(0) :
      |\delta_R(z;\eta_{j}^{(1)})-\theta_{ \mu} |>\varepsilon \big\},
  \end{equation*}
  we see that on the event $\{\Block(0,0)\text{ is well-started}\}$
  \begin{align*}
    \P \big( \Psi_{T_\block}^c \big | \mathcal{F}_0\big)
    &\le 2 \, \P \Bigg( \bigcup_{j=1}^{T_\block} A_j  \Bigg |  \mathcal{F}_0 \Bigg)
    \le 2 \sum_{j=1}^{T_\block} \P \big ( A_j \cap A_{j-1}^c \big | \mathcal{F}_0\big)
    \le 2 \sum_{j=1}^{T_\block} \P ( A_j \mid  A_{j-1}^c  , \mathcal{F}_0).
  \end{align*}
  Together with Lemma~\ref{lem:dens.dev} it follows with some constants $c_1,c_2>0$ that
  \begin{equation}
  \label{eqn:rhs_psi}
   \ind_{\{\Block(0,0)\text{ is well-started}\}} \P \big( \Psi_{T_\block}^c \big | \mathcal{F}_0\big) 
   \le c_1 T_\block (L_\block)^d \exp(-c_2 \, V_R^d).
  \end{equation}
  
 In order to utilise the control guaranteed by the pyramids $\Psi_n$ we introduce events that describe properties (ii) and (iii) in Definition~\ref{def:good_block}:
  \begin{align*}
    \mathcal{C}
    &:= \big\{ \eta_{T_\block}^{(1)}(x) = \eta_{T_\block}^{(2)}(x)
      \text{ for } \norm{x} \le 3 R'_\block \big\} \\
    \mathcal{D} &:= \big\{ \big(\eta^{(1)}_{T_\block},
        \eta^{(2)}_{T_\block}\big)
      \text{ satisfy \eqref{eq:well-started2} around }
      L'_\block e \text{ for all } \norm{e}\le 1 \big\}.
  \end{align*}
  We are interested in the conditional probability $\P(\mathcal{C} \cap \mathcal{D} | \mathcal{F}_0)$ on the event that $\Block(0,0)$ is well-started. Clearly it holds that
    \begin{align}
    \label{eqn:lb_coupling1}
      \P \big( \mathcal{C}^c \cup \mathcal{D}^c \big| \mathcal{F}_0\big)
      \le \P \big( \mathcal{C}^c \cap \Psi_{T_\block} \big| \mathcal{F}_0\big)
      + \P\big( \Psi_{T_\block}^c  \big | \mathcal{F}_0\big) + \P \big( \mathcal{D}^c \big | \mathcal{F}_0\big).
  \end{align}
  By \eqref{eqn:rhs_psi} the second term in \eqref{eqn:lb_coupling1} decays in $R$ for well-started configurations. To deal with the third term, note that it follows from Lemma~\ref{lem:dens.dev2} and Lemma~\ref{lem:dens.dev}
  that for some constants $c_3,c_4 >0$
  \begin{equation}\label{eqn:rhs_Y}
    \ind_{\{\Block(0,0)\text{ is well-started}\}} \P(\mathcal{D}^c | \mathcal{F}_0) \le c_3 T_\block (L_\block)^d \exp(-c_4 \, V_R^d).
  \end{equation}
  It remains to find a bound for
  $\P \big( \mathcal{C}^c \cap \Psi_{T_\block} \big | \mathcal{F}_0 \big)$. To this end fix $k \in \{ 1, \dots, T_\block \}$. By a union bound and Markov's inequality
  \begin{equation}
    \label{eqn:Xc}
    \begin{split}
      & \P\big(\{ \exists\, \abs{x} \le R'(k) \text{ such that }\eta^{(1)}_k(x) \neq \eta^{(2)}_k(x) \} \cap \Psi_k \big| \mathcal{F}_0\big) \\
      &\qquad \le \sum_{x \in B_{R'(k)}(0)}
      \E\big[ \ind_{\Psi_{k}} |\eta_{k}^{(1)}(x)-\eta_{k}^{(2)}(x)| \big| \mathcal{F}_0 \big]\\
      & \qquad = \E\Big[\sum_{x \in B_{R'(k)}(0)}\ind_{\Psi_{k-1}}
        \E\big[|\eta_{k}^{(1)}(x)-\eta_{k}^{(2)}(x)|\big | \mathcal{F}_{k-1}\big] \Big| \mathcal{F}_0\Big].
    \end{split}
  \end{equation}

  In light of the coupling \eqref{eqn:coupling}, we have
  \begin{align*}
    \E \big[ \abs{ \eta_{k}^{(1)}(x)- \eta_{k}^{(2)}(x) } \big|\,
      \mathcal{F}_{k-1} \big]
    & = \P \Big( U(x,k) \le
      \abs[\big]{\varphi_{\mu}(\delta_R(x; \eta_{k-1}^{(1)}))
        - \varphi_{\mu}(\delta_R(x; \eta_{k-1}^{(2)}))}
      \Big|\, \mathcal{F}_{k-1} \Big) \\
    & = \abs[\big]{\varphi_{\mu}(\delta_R(x; \eta_{k-1}^{(1)}))
      - \varphi_{\mu}(\delta_R(x; \eta_{k-1}^{(2)})) }.
  \end{align*}

  Now
  $\delta_R(x;\eta_{k-1}^{(i)}) \in [\theta_{\mu} -\varepsilon,\theta_{\mu}+ \varepsilon]$
  for $i = 1,2$ on the event $\Psi_{k-1}$ and by
  Lemma~\ref{lem:alpha_sequences}, $\varphi_{\mu}$ is a contraction with
  Lipschitz constant $\kappa(\mu, \varepsilon)<1$ on this interval.
  Therefore
  \begin{align*}
    \ind_{\Psi_{k-1}} \E \big[
      \abs{ \eta_{k}^{(1)}(x) - \eta_{k}^{(2)}(x) }
      \,\big|\, \mathcal{F}_{k-1} \big]
    \le & \ind_{\Psi_{k-1}} \kappa(\mu, \varepsilon) \abs[\big]{
      \delta_R(x; \eta_{k-1}^{(1)}) - \delta_R(x; \eta_{k-1}^{(2)}) } \\
    \le& \ind_{\Psi_{k-1}} \kappa(\mu,\varepsilon) V_R^{-d}
    \sum_{y \in B_R(x)}\abs[\big]{\eta_{k-1}^{(1)}(y)-\eta_{k-1}^{(2)}(y)}.
  \end{align*}
  Plugging this back into \eqref{eqn:Xc} yields
  \begin{align*}
    & \P\big(\{ \exists\, \abs{x} \le R'(k) \text{ such that }\eta^{(1)}_k(x) \neq \eta^{(2)}_k(x) \} \cap \Psi_k \big | \mathcal{F}_0\big) \\
    & \qquad \le \kappa(\mu,\varepsilon) V_R^{-d} \, \sum_{x \in B_{R'(k)}(0)}
    \sum_{y \in B_R(x)} \E
    \Big[ \ind_{\Psi_{k -1}}
      \abs[\big]{\eta_{k -1}^{(1)}(y)-\eta_{k -1}^{(2)}(y)} \Big | \mathcal{F}_0\Big].
  \end{align*}
  By inductively repeating this step another $k -1$ times, we can upper
  bound the right hand side of the last display by
  \begin{align*}
    & \kappa(\mu,\varepsilon)^{k} V_R^{-dk} \,
    \E \Big[ \sum_{x \in B_{R'(k)}(0)} \sum_{y_1 \in B_R(x)}
      \sum_{y_2 \ \in B_R(y_1)}\cdots
      \sum_{y_{k} \in B_R(y_{k-1})}
      \abs[\big]{\eta_{0}^{(1)}(y_{k})-\eta_{0}^{(2)}(y_{k})} \Big | \mathcal{F}_0\Big].
  \end{align*}
  Since $\abs[\big]{ \eta_0^{(1)}(y_k)-\eta_0^{(2)}(y_k) } \le 1$, with
  $k=T_\block$ we obtain
  \begin{align}
    \label{eqn:rhs_X}
     \ind_{\{\Block(0,0)\text{ is well-started}\}} \P\big( \mathcal{C}^c \cap \Psi_{T_\block}  \big| \mathcal{F}_0\big)
    &\le  \kappa(\mu,\varepsilon)^{T_\block} V_{R'(T_\block)}^d.
  \end{align}
  The choice $c_\ttime > -(d+1)/\log \kappa$
  guarantees that this probability tends to zero as $R$ goes to infinity.
  Combining \eqref{eqn:rhs_X} together with~\eqref{eqn:rhs_Y}
  and~\eqref{eqn:rhs_psi} gives that, on the event that $\Block(0,0)$ is well-started, all the terms on the right-hand side
  of \eqref{eqn:lb_coupling1} tend to zero as $R$ goes to infinity, thus proving \eqref{eq:welltogood_zero}.
\end{proof}

\subsection{Proof of Theorem~\ref{thm:complete_convergence}}
\label{sec:prfcompconv}

We now have all required tools to prove complete convergence of the BARW.
Given these tools, the proof is relatively standard and thus it is kept
brief.

\begin{proof}[Proof of Theorem~\ref{thm:complete_convergence}]
  As the Dirac measure concentrated around $\eta \equiv 0$ is an invariant
  distribution for $\eta$ we only need to show existence of a
  unique non-trivial limiting invariant measure which does not charge the
  empty configuration. To this end, let $\nu_0$ be the product measure on
  $\Z^d$ such that, for all $x \in \Z^d$, $\eta_0(x) = 1$ with probability
  $p >0$  and $\eta_0(x)=0$ otherwise. For any $n\ge 1$, denote by $\nu_n$ the
  distribution of $\eta_n$ given that $\eta_0$ is distributed as $\nu_0$.

  Since the set of all probability measures on $\{0,1\}^{\Z^d}$ is
  compact, there exists a subsequence along which
  $\tfrac{1}{N} \sum_{n=0}^N \nu_n$ converges to some probability measure
  $\nu$ on $\{0,1\}^{\Z^d}$. From a standard result for interacting
  particle systems, see e.g.~\cite[Proposition~1.8]{liggett1985interacting},
  any such subsequential limit $\nu$ must be invariant for the
  process $\eta$.

  To show that $\nu$ is non-trivial (and actually gives zero mass to the
    empty configuration $\eta \equiv 0$), it suffices to show that $\eta$
  survives almost surely. As we chose $\nu_0$ to be a product measure and
  since for any fixed $R$ the blocks defined in
  Section~\ref{sec:survival} depend only on finitely many sites, it
  follows that at time $0$ there are almost surely infinitely many
  \emph{well-started} blocks and hence by \eqref{eqn:wellgood} infinitely
  many \emph{good} blocks. By the correspondence of the blocks with
  supercritical oriented site percolation and the fact that supercritical
  oriented site percolation starting from infinitely many occupied sites
  does not die out (see e.g.~\cite[Theorem B24]{liggett1999stochastic}),
  we have $\P_{\nu_0}( \exists n \ge 1 : \eta_n \equiv 0 )=0$.

  Furthermore, the measure $\nu$ is extremal, because any limiting
  invariant distribution $\nu'$ which gives zero mass to $\eta \equiv 0$
  must be unique. Indeed, if two stationary distributions existed with
  this property, then by Theorem~\ref{thm:coupling} they would coincide
  on finite subsets of $\Z^d$, and would therefore be equal. Furthermore,
  under $\nu$, $\eta$ has exponentially decaying correlations in space
  and in time, see~\cite[Corollary~3.18]{DepperschmidtPhD}, which in particular implies ergodicity with respect to spatial
  shifts. Indeed, using the construction of good blocks from the proof of
  Theorem~\ref{thm:coupling} below, this can be deduced from the
  corresponding property of supercritical oriented percolation in a
  fairly straightforward way, see for example the analogous construction
  in \cite[Section~3.4]{DepperschmidtPhD} for the related model of a
  locally regulated population from \cite{birkner2007survival}.

  Finally, in order to verify the complete convergence, consider any
  (fixed) initial condition $\widetilde{\eta}_0\in \{0,1\}^{\Z^d}$, a
  finite box $B \subset \Z^d$ centred at the origin and a configuration
  $\zeta \in \{0,1\}^B$. With
  $\mathcal{S} := \{ \eta_m \not\equiv 0 \text{ for all } m \in \N\}$ we
  have to check that
  \begin{equation}
    \label{eq:compconv}
    \lim_{n\to\infty} \P_{\widetilde{\eta}_0}(\{ \eta_n|_B = \zeta\}
      \cap \mathcal{S})
    = \P_{\widetilde{\eta}_0}(\mathcal{S}) \nu(\{ \eta_0|_B = \zeta\}).
  \end{equation}
  Pick $\varepsilon>0$. The coupling construction from
  the proof of Theorem~\ref{thm:coupling} and standard properties of supercritical
  oriented percolation show that one can pick $L' \in \N$ and
  $T' \in \N$ large so that
  $\big| \P_{\eta_0'}(\{ \eta_m|_B = \zeta\}) - \nu(\{ \eta_0|_B =
      \zeta\}) \big| \le \varepsilon$ for all $m \ge T'$ and all starting
  configurations
  \begin{equation*}
    \eta_0' \in G' := \left\{ \widetilde{\eta} \in \{0,1\}^{\Z^d}  : \;
      \parbox{0.6\linewidth}{The density of well-started sub-boxes,
        where the local density of $\widetilde{\eta}$ satisfies \eqref{eq:well-started2}
        from Definition~\ref{def:well_started}, in a box of radius
        $L' R_\block$ is at least $1/2$.} \: \right\}.
  \end{equation*}
  Furthermore, since starting from any non-trivial initial condition
  there is a positive chance of producing a well-started box in a
  finite number of steps, a ``restart'' argument together with the
  construction from Theorem~\ref{thm:coupling} shows that
  $\P_{\widetilde{\eta}_0}(\mathcal{S} \Delta \{ \eta_n \in G'\}) \le
  \varepsilon$ for all large enough~$n$.
  Thus
  \begin{align*}
    & \left| \P_{\widetilde{\eta}_0}(\{ \eta_n|_B = \zeta\} \cap \mathcal{S})
    - \P_{\widetilde{\eta}_0}(\mathcal{S}) \nu(\{ \eta_0|_B = \zeta\})\right| \\
    & \quad \le
    \left| \P_{\widetilde{\eta}_0}(\{ \eta_n|_B = \zeta\} \cap \{ \eta_{n/2} \in G'\})
    - \P_{\widetilde{\eta}_0}(\{ \eta_{n/2} \in G'\}) \nu(\{ \eta_0|_B = \zeta\})\right|
    + 2 \varepsilon \\
    & \quad \le \E_{\widetilde{\eta}_0}\Big[ \ind_{\{ \eta_{n/2} \in G'\}}
      \big| \P_{\widetilde{\eta}_0}(\eta_n|_B = \zeta \,|\, \mathcal{F}_{n/2}) - \nu(\{ \eta_0|_B = \zeta\}) \big| \Big]
    + 2 \varepsilon \le 3 \varepsilon.
  \end{align*}
  Taking $n\to\infty$ and then $\varepsilon\downarrow0$ proves
  \eqref{eq:compconv}.
\end{proof}

\section{Extinction results}
\label{sec:extinction}

We provide here a simple proof of Theorem~\ref{thm:R_fixed_extinction}
describing the extinction regime.

\begin{proof}[Proof of Theorem~\ref{thm:R_fixed_extinction}]
  Let $R \in \N$ and $\mu>0$ be such that
  \begin{equation}
    \label{def:mutilde}
    \widetilde{\mu} := V_R^d \, \varphi_{\mu}\big(V_R^{-d}\big) = \mu e^{-\mu V_R^{-d}} < 1.
  \end{equation}
  Then $\psi(w) := \widetilde{\mu} w$ fulfils
  $\varphi_\mu(w) \le \psi(w)$ on $ [0, 1] \cap V_R^{-1} \Z$ (note
    that if $w \ge V_R^{-1}$, we have
    $\varphi_\mu(w) = \mu w \exp(-\mu w) \le \mu w \exp(-\mu
      V_R^{-d}) = \widetilde{\mu} w$ and $\varphi_\mu(0)=\psi(0)$).

  Thus, we can define a process $(\widetilde{\eta}_n)_{n \in \N_0}$ with
  $\widetilde{\eta}_0 = \eta_0$ using this $\psi$ as in
  \eqref{eqn:etatilde}. By the coupling construction from
  Section~\ref{ss:coupling} and specifically
  Lemma~\ref{lem:coupling_lemma}(b) we conclude that
  $\eta_n(x) \le \widetilde{\eta}_n(x)$ holds for all
  $n \in \N, x \in \Z^d$. Since $\psi$ is a linear function, we have
  \begin{align*}
    \E[\widetilde{\eta}_n(x)]
    & = \widetilde{\mu}V_R^{-d} \sum_{y \in B_R(x)}
    \E[\widetilde{\eta}_{n-1}(y)]
  \end{align*}
  Iterating this $n$ times shows
  \begin{align*}
    \E[\widetilde{\eta}_n(x)]
    & = \widetilde{\mu}^n \sum_{z \in \Z^d} p^{(n)}(x,z)
    \E[\eta_0(z)] \le \widetilde{\mu}^n
  \end{align*}
  where $p^{(n)}$ is the $n$-fold convolution of the uniform transition
  kernel on $B_R(0)$ with itself. Since $\widetilde{\mu} < 1$ this
  combined with the coupling shows that
  $\sum_{n=1}^\infty \P(\eta_n(x) > 0) < \infty$ so that indeed for every
  $x \in \Z^d$
  \begin{align*}
    \P(\eta_n(x) = 0 \text{ for all $n$ large enough}) = 1.
  \end{align*}

  Next note that the equation $\mu \exp(-\mu V_R^{-d})=1$ , i.e.\ the
  equality in \eqref{def:mutilde}, has two positive real solutions
  $\mu_1, \mu_2$ such that $1 < \mu_1 < \mu_2 < \infty$ when $R \ge 1$
  (when $R=0$ there is always extinction). The function
  $\mu \mapsto \mu \exp (-\mu V_R^{-d})$ is unimodal and vanishes at 0 as
  well as at $+\infty$, so if $\mu < \mu_1$ or $\mu > \mu_2$ there is
  extinction.

  We can rewrite $\mu \exp ( -\mu V_R^{-d} )=1$ as $y e^y =x$ where
  $y = -\mu V_R^{-d}$ and $x = -V_R^{-d}$. When $x \in [-1/e, 0)$, this
  equation has two real solutions $y_1 =W_0(x)$ and $y_2 =W_{-1}(x)$,
  where $W_0$ and $W_{-1}$ are two branches of the Lambert $W$ function.
  Since $\mu = -V_R^d y$, the two solutions of
  $\mu \exp ( -\mu V_R^{-d} )=1$ are
  \begin{equation*}
    \mu_1 = -V_R^d W_0 ( -V_R^{-d}), \qquad
    \mu_2 = -V_R^d W_{-1} ( -V_R^{-d} ).
  \end{equation*}

  Since $x \in [-1/e, 0)$, we can express $W_0(x)$ with its Taylor series
  centred at 0, which has radius of convergence $1/e$, that is
  \begin{equation*}
    W_0(x) = \sum_{n=1}^{\infty} \frac{(-n)^{n-1}}{n!} x^n
    = x-x^2 +\frac{3}{2} x^3 -\frac{8}{3} x^4 +
    \cdots
  \end{equation*}
  This gives
  \begin{equation}
    \label{eq:mu1asympt}
    \mu_1 = -V_R^d W_0 ( -V_R^{-d} )
    = 1 + V_R^{-d} + \frac{3}{2} V_R^{-2d} + \cdots
  \end{equation}
  For the second solution, we use that
  \begin{equation*}
    -1-\sqrt{2u} -u < W_{-1}(-e^{-u-1} ) < -1-\sqrt{2u} - \frac{2u}{3}
  \end{equation*}
  for every $u >0$. Take $u = d \log V_R-1$. Then the formula above gives
  \begin{align}
    \label{eq:mu2asympt}
    - \sqrt{2d \log V_R -2} - d \log V_R
    & < W_{-1}( -V_R^{-d}) \notag \\
    & <  -\frac{1}{3} - \sqrt{2d \log V_R -2} - \frac{2d}{3} \log V_R,
  \end{align}
  which gives the result.
\end{proof}

\section{Auxiliary results}

We prove here the auxiliary technical results that were omitted in the
previous sections. Section~\ref{sec:pf.lem:f1dprofilelbd} deals with
Lemma~\ref{lem:f1dprofilelbd} which was used in the construction of the
comparison density profiles $\xi_n^-$ in Section~\ref{sec:survival}. In
Section~\ref{sec:construction}, we then show
Lemma~\ref{lem:alpha_sequences} used in the proof of the complete
convergence in Section~\ref{sec:complete_convergence}.  Finally, in
Section~\ref{sec:appendix_2} we provide a proof of
Proposition~\ref{prop:cml}.

\subsection{Proof of Lemma~\ref{lem:f1dprofilelbd}}
\label{sec:pf.lem:f1dprofilelbd}

Recall that $f:\mathbb Z\to [0,1]$ is defined by
\begin{equation*}
  f(x) = \min\big\{ (\varepsilon_0 +  x/\lceil wR \rceil) \ind_{x \ge 0}, 1 \big\}.
\end{equation*}
It is immediate that this function satisfies the properties in
\eqref{eqn:f_properties}. Therefore, it only remains to show that
\eqref{eqn:density_domination} holds for a suitable choice of parameters.

It is clear that the larger the growth factor $a$ is, the easier it is
for \eqref{eqn:density_domination} to be satisfied. Setting
$\varepsilon_0 = \min\{(a-1)^2,1/100\}$, it follows that
\begin{equation*}
  a\delta_R(x;f) \ge (1+\sqrt{\varepsilon_0})\delta_R(x;f) \quad
  \text{ for all } x \in \Z,
\end{equation*}
which lets us reduce to the case where $1<a<11/10$ and
$a = 1 + \sqrt{\varepsilon_0}$.

We now set
\begin{equation}
  \label{eqn:awe}
  w = 1/\sqrt{\varepsilon_0},
\end{equation}
and define
\begin{equation*}
  C_0 := \{ y \in \Z : y < 0 \}, \qquad C_1 := \{ y \in \Z : y \ge \lceil wR
    \rceil\},
\end{equation*}
so that $f(y)=0$ for every $y \in C_0$ and $f(y)=1$ for every $y \in C_1$.
Since $w>1$, exactly one of the two sets $B_R(x) \cap C_0$
and $B_R(x)\cap C_1$ can be non-empty. Clearly
\eqref{eqn:density_domination} holds when $B_R(x) \subseteq C_0$, or
$B_R(x) \subseteq C_1$.

When $B_R(x) \cap (C_0 \cup C_1) = \emptyset$,  then
$f(y)=\varepsilon_0 + y/\lceil wR \rceil$ for every $y \in B_R(x)$ and
thus $\delta_R(x;f) = f(x)$, so \eqref{eqn:density_domination} holds as
well.

The remaining two cases are more delicate. When
$B_R(x) \cap C_0 \neq \emptyset$ and $B_R(x) \not\subseteq C_0$, that is
when $-R \le x < R$, then the density of $f$ around $x$ can be written as
\begin{align*}
  \delta_R(x;f) =  V_R^{-1} \sum_{y=0}^{x+R} f(y)
  &=  V_R^{-1} \sum_{y=0}^{x+R} \Big(\varepsilon_0+\frac{y}{\lceil wR \rceil}\Big) \\
  &= V_R^{-1}\Big(  (x+R+1)\varepsilon_0 +\frac{1}{2\lceil wR \rceil}(x+R)(x+R+1)\Big).
\end{align*}
Using this, \eqref{eqn:density_domination} is equivalent to
\begin{align*}
  a(x+R+1)\varepsilon_0 + \frac{a}{2\lceil wR \rceil}(x+R)(x+R+1) &\ge V_R f(x+\lceil sR \rceil) \\
  &= V_R \Big( \varepsilon_0 +\frac{x}{ \lceil wR \rceil} +\frac{\lceil sR
      \rceil}{\lceil wR  \rceil }\Big).
\end{align*}
Rearranging terms, we arrive at a quadratic inequality
\begin{equation}
  \label{eqn:quadratic}
  \alpha x^2 + \beta x + \gamma \ge 0,
\end{equation}
where
\begin{align*}
  \alpha &= \frac{a}{2\lceil wR \rceil }, \\
  \beta&= a \varepsilon_0 + \frac{R}{\lceil wR \rceil}
  \Big( \frac{a}{2}-1 \Big) \Big( 2 + \frac{1}{R} \Big), \\
  \gamma &= a\varepsilon_0R \Big( 1+\frac{1}{R} \Big)
  +  \frac{aR^2}{2\lceil wR \rceil} \Big( 1+\frac{1}{R}\Big) -R \Big(2+\frac{1}{R} \Big)
  \Big( \varepsilon_0+\frac{\lceil s R\rceil }{\lceil wR \rceil} \Big).
\end{align*}
As $\alpha>0$ for our choices of parameters, \eqref{eqn:quadratic} and
hence \eqref{eqn:density_domination} follow immediately if the polynomial
$\alpha x^2 + \beta x+\gamma$ has no real roots.
The discriminant of \eqref{eqn:quadratic} is given by
\begin{equation}
  \label{eqn:quad2}
  \begin{split}
    \beta^2 - 4 \alpha \gamma
    & = \Big( a \varepsilon_0 + \frac{2}{w}
      \Big( \frac{a}{2}-1 \Big) \Big)^2 - \frac{2a}{w} \Big( a \varepsilon_0
      +  \frac{a}{2w}-2 \Big( \varepsilon_0+\frac{ s }{w} \Big)  \Big)+O(R^{-1}) \\
    & = (a \varepsilon_0)^2 + \frac{4}{w^2}(1-a+as)+O(R^{-1}).
  \end{split}
\end{equation}
We now choose
\begin{equation*}
  s= \frac{\sqrt{\varepsilon_0}}{1+\sqrt{\varepsilon_0}}- \varepsilon_0,
\end{equation*}
which is clearly positive for $\varepsilon_0\in (0,1/100)$. Recalling
also \eqref{eqn:awe} and that $a = 1+\sqrt {\varepsilon_0}$, the right-hand
side of \eqref{eqn:quad2} (without the error term) equals
\begin{align*}
  \varepsilon_0^2(\sqrt {\varepsilon_0} - 3)(1+\sqrt{\varepsilon_0})
\end{align*}
which is clearly negative. As consequence, the quadratic inequality
\eqref{eqn:quadratic} holds for all $R$ big enough, depending only on
$a$, and thus \eqref{eqn:density_domination} holds also in this case.

For the final case, when  $B_R(x)\cap C_1 \neq \emptyset$ that is
$\lceil wR \rceil - R \le x \le \lceil wR \rceil - R$, we observe that
the right-hand side of \eqref{eqn:density_domination} is bounded by one
and the left-hand side is increasing in $x$. It is thus sufficient to
show that $a \delta_R(\lceil wR \rceil - R-1) \ge 1$. Using again the
fact that $f$ is linear in the $R$-neighbourhood of
$\lceil wR \rceil - R-1$, this is equivalent to showing
$a f(\lceil wR \rceil - R-1) \ge 1$. Recalling the definitions of $a$, $w$
and $s$ in terms of $\varepsilon_0$, we have
\begin{align*}
  a f(\lceil wR \rceil - R-1)
  &= a \Big(\varepsilon_0  + \frac{\lceil wR \rceil -R -1}{\lceil wR
      \rceil}\Big)
  \\&= a \big(\varepsilon_0 + 1- w^{-1} + O(R^{-1})\big)
  \\&= (1+\sqrt {\varepsilon_0}) \big(\varepsilon_0 + 1-
    \sqrt{\varepsilon_0} + O(R^{-1})\big)
  \\&= 1+\varepsilon_0^{3/2} + O(R^{-1}),
\end{align*}
and thus the required inequality is satisfied for $R$ large enough.
\hfill $\qed$

\subsection{Proof of Lemma~\ref{lem:alpha_sequences}}
\label{sec:construction}

We now prove Lemma~\ref{lem:alpha_sequences}, exploiting 
properties of $\varphi_\mu $ in the vicinity of its fixpoint
$\theta_\mu $.

\begin{proof}[Proof of Lemma~\ref{lem:alpha_sequences}]
  To prove that $\varphi_\mu $ is a contraction in the vicinity of its
  critical point $\theta_\mu  = \mu^{-1}\log \mu$, it suffices to
  observe that $\abs{\varphi_\mu '(w)} < 1$ in some neighbourhood of
  $\theta_\mu $. Since $\varphi_\mu '(w) = \mu  e^{-\mu  w} (1-\mu w)$,
  it holds that
  $\abs{\varphi'_\mu (\theta_\mu )} = \abs{1 - \log \mu}< 1$ if
  $\mu  \in (1,e^2)$.  The statement then follows by the continuity of
  the derivative.

  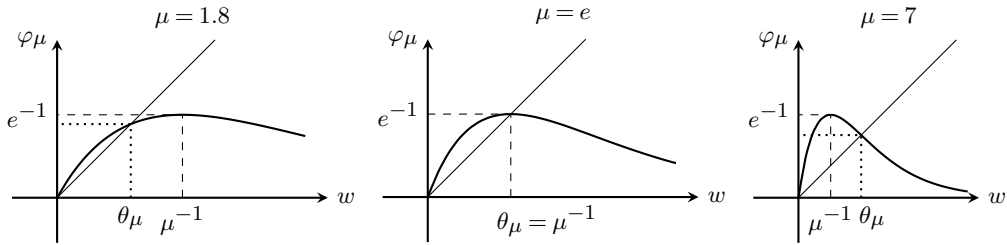
\begin{figure}[b!]
    \begin{tikzpicture}[scale=3]
      \draw (0.6,0.8) node {$\mu=1.8$};
      \draw[thick, -stealth] (-0.2, 0) -- (1.2, 0) node[right] {$w$};
      \draw[thick, -stealth] (0, -0.2) -- (0, 0.7) node[left] {$\varphi_\mu$};
      \draw (0,0) -- (0.7,0.7);
      \draw[domain=0:1.1, smooth, variable=\x, black, thick=1] plot ({\x}, {1.8*\x*exp(-1.8*\x)});
      \draw[dashed] ({1/1.8},0) -- ({1/1.8},{exp(-1)}) -- (0, {exp(-1)});
      \draw (0, {exp(-1)}) node[left] {$e^{-1}$};
      \draw ({1/1.8},0) node[below] {$\mu^{-1}$};
      \draw[thick, dotted] ({ln(1.8)/1.8},0) -- ({ln(1.8)/1.8},{ln(1.8)/1.8}) -- (0,{ln(1.8)/1.8});
      \draw ({ln(1.8)/1.8},0) node[below] {$\theta_\mu$} ;
    \end{tikzpicture}
    \begin{tikzpicture}[scale=3]
      \draw (0.6,0.8) node {$\mu=e$};
      \draw[thick, -stealth] (-0.2, 0) -- (1.2, 0) node[right] {$w$};
      \draw[thick, -stealth] (0, -0.2) -- (0, 0.7) node[left] {$\varphi_\mu$};
      \draw (0,0) -- (0.7,0.7);
      \draw[domain=0:1.1, smooth, variable=\x, black, thick=1] plot ({\x}, {exp(1)*\x*exp(-exp(1)*\x)});
      \draw[dashed] ({exp(-1)},0) -- ({exp(-1)},{exp(-1)}) -- (0, {exp(-1)});
      \draw (0, {exp(-1)}) node[left] {$e^{-1}$};
      \draw ({exp(-1)-0.1},0) node[below right] {$\theta_\mu = \mu^{-1}$};
    \end{tikzpicture}
    \begin{tikzpicture}[scale=3]
      \draw (0.4,0.8) node {$\mu=7$};
      \draw[thick, -stealth] (-0.2, 0) -- (0.8, 0) node[right] {$w$};
      \draw[thick, -stealth] (0, -0.2) -- (0, 0.7) node[left] {$\varphi_\mu$};
      \draw (0,0) -- (0.7,0.7);
      \draw[domain=0:0.75, smooth, variable=\x, black, thick=1] plot ({\x}, {7*\x*exp(-7*\x)});
      \draw[dashed] ({1/7},0) -- (({1/7},{exp(-1)}) -- (0, {exp(-1)});
      \draw (0, {exp(-1)}) node[left] {$e^{-1}$};
      \draw ({1/7},0) node[below] {$\mu^{-1}$};
      \draw[thick, dotted] ({ln(7)/7},0) -- ({ln(7)/7},{ln(7)/7}) -- (0,{ln(7)/7});
      \draw ({ln(7)/7 +0.05},0) node[below] {$\theta_\mu$};
    \end{tikzpicture}
    \caption{The function $\varphi_\mu$, its fixpoint $\theta_\mu$ and its maximum in the case
      $\mu < e$ (left), $\mu = e$ (middle) and $e < \mu < e^2$ (right)}
    \label{fig:3cases}
  \end{figure}

  To find the sequences $\alpha_m$ and $\beta_m$, note first that
  $\varphi_\mu $ is increasing on $[0,1/\mu ]$ and decreasing on
  $[1/\mu ,\infty]$. It is convenient to consider three cases (cf.\ also Figure~\ref{fig:3cases}):
  \smallskip

  (1) If $\mu \in (1,e)$, then $\theta_{\mu} < 1/e < 1/\mu $, and thus
  $\varphi_{\mu}$ is a strictly increasing on $[0, 1/e]\ni \theta_\mu $,
  and $\varphi_{\mu}(w)>w$ if $w < \theta_{\mu}$, and
  $\varphi_{\mu}(w) < w$ when $w \in ( \theta_{\mu}, 1/e]$. Pick
  $\alpha_1 < \theta_{\mu}$ and $\beta_1 > 1/\mu$ satisfying
  $\varphi_{\mu}(\beta_1) \ge \varphi_{\mu}(\alpha_1)$. Put
  $\alpha_2 = (\alpha_1+ \varphi_{\mu}(\alpha_1))/2$,
  $\beta_2 =(e^{-1}+\mu^{-1})/2$, then we have indeed
  $\varphi_\mu([\alpha_1, \beta_1]) \subseteq(\alpha_2,\beta_2)$. From
  here on, we can simply iterate by setting
  \begin{equation}
    \label{eq:alpha.iter}
    \alpha_{m+1} = \frac{\alpha_m + \varphi_{\mu}(\alpha_m)}{2},
    \qquad \beta_{m+1} = \frac{\beta_m + \varphi_{\mu}(\beta_m)}{2}, \quad m \ge 2.
  \end{equation}
  This defines two sequences converging to $\theta_{\mu}$. Furthermore
  $\alpha_{m} < \alpha_{m+1} < \varphi_{\mu}(\alpha_m)$ and
  $\varphi_{\mu}(\beta_m) < \beta_{m+1} < \beta_m$, so
  $(\alpha_m)_{m \ge 1}$ is strictly increasing and $(\beta_m)_{m \ge 1}$
  is strictly decreasing. Since $\varphi_{\mu}$ is strictly increasing on
  $[0,1/e]$ we also have
  $\varphi_{\mu}([ \alpha_m, \beta_m]) \subseteq(\alpha_{m+1}, \beta_{m+1})$
  for every  $m \ge 0$, as required. \smallskip

  (2) Consider now the case $\mu = e$, that is when $\theta_\mu=1/e$ and
  $\varphi_{\mu}'(\theta_{\mu})=0$. Pick any $\alpha_1 < 1/e$ and
  $\beta_1 = 1/e$ such that
  $\varphi_{\mu}(\beta_1) \ge \varphi_{\mu}(\alpha_1)$; then build the
  sequence $(\alpha_m)_{m \ge 1}$ in the same way as in the case
  $\mu \in (1,e)$, i.e.\ as in \eqref{eq:alpha.iter}, using $m \ge 1$
  there. By construction, this sequence is strictly increasing and
  converges to $\theta_{\mu}$. For every $m \ge 2$,  let $\beta_m$ be the
  largest solution of $\varphi_{\mu}(x)= \varphi_{\mu}(\alpha_m)$. Since
  $w \mapsto \varphi_{\mu}(w)$ is (strictly) increasing if and only if
  $w \in [0,1/e]$, this defines a strictly decreasing sequence
  $(\beta_m)_{m \ge 1}$ converging to $\theta_{\mu}$ and such that
  \begin{equation*}
    \varphi_{\mu}( [\alpha_m, \beta_m])
    \subseteq[ \varphi_{\mu}(\alpha_m), 1/e] \subseteq(\alpha_{m+1},
      \beta_{m+1}),
  \end{equation*}
  as required.

  \smallskip

  (3) Finally, let $\mu \in (e, e^2)$, which implies $1/\mu < \theta_\mu$
  and $\varphi_\mu'(\theta_\mu) \in (-1,0)$. For the initial piece, pick
  $\alpha_1 < 1/\mu$ and $\lambda >0$ so small that
  $\mu^{-1} + \lambda e^{-1} < \theta_\mu$. Define, similarly to
  \eqref{eq:alpha.iter},
  \begin{equation*}
    \alpha_{m+1} = \lambda \varphi_{\mu}(\alpha_m) + (1-\lambda) \alpha_m , \; m \le m_0 - 1,
  \end{equation*}
  where $m_0$ is the smallest integer satisfying $\alpha_{m_0} > 1/\mu$.
  Note that by construction and the choice of $\lambda$, since
  $\varphi_\mu$ is strictly increasing on $[0,1/\mu]$ and bounded by $1/e$,
  we have
  $\alpha_1 < \alpha_2 < \cdots < \alpha_{m_0-1} \le 1/\mu < \alpha_{m_0} < \theta_\mu$.
  Choose $\beta_1 > \beta_2 > \cdots > \beta_{m_0} > 1/e$ ($> \theta_\mu$)
  so that $\varphi_\mu(\beta_m) > \varphi_\mu(\alpha_m)$ for
  $m=1,\dots,m_0$, then we have
  $\varphi_\mu([\alpha_m,\beta_m]) \subseteq(\alpha_{m+1},\beta_{m+1})$
  for $m=1,\dots,m_0-1$.

  Since $\alpha_{m_0} > 1/\mu$, the iteration has reached the
  decreasing part of $\varphi_\mu$ after $m_0$ steps and we thus must
  swap the roles of the upper and the lower boundary in each step: Set
  for $m \ge m_0$
  \begin{equation*}
    \alpha_{m+1} = \frac{\varphi_{\mu}(\beta_m)+\alpha_m}{2},
    \qquad \beta_{m+1} = \frac{\varphi_{\mu}(\alpha_m)+ \beta_m}{2}.
  \end{equation*}
  We note that if $\varphi_{\mu}(\alpha_m) < \beta_m$ and
  $\varphi_{\mu}(\beta_m) > \alpha_m$ then the same holds for
  $\alpha_{m+1}$ and $\beta_{m+1}$. Indeed
  $\varphi_{\mu}(\beta_m) > \alpha_{m}$ implies that
  $\alpha_{m+1} > \alpha_m$ and since $\varphi_{\mu}$ is decreasing then
  $\varphi_{\mu}(\alpha_{m+1}) < \varphi_{\mu}(\alpha_m)$. Similarly
  $\varphi_{\mu}(\alpha_m) < \beta_m$ implies that
  $\beta_{m+1} < \beta_m$ and so
  $\varphi_{\mu}(\alpha_m) = 2 \beta_{m+1} - \beta_m < \beta_{m+1}$.
  Combining the two gives
  $\varphi_{\mu}(\alpha_{m+1}) < \varphi_{\mu}(\alpha_m) < \beta_{m+1}$.
  In the same way we can prove that
  $\varphi_{\mu}(\beta_{m+1}) > \alpha_{m+1}$. Hence for $m \ge m_0$
  \begin{equation*}
    \varphi_{\mu}( (\alpha_m, \beta_m))
    \subseteq [ \varphi_{\mu}(\beta_m), \varphi_{\mu}(\alpha_m)]
    \subseteq(\alpha_{m+1}, \beta_{m+1}).
  \end{equation*}

  It is clear from the construction that in each one of the three cases
  $\alpha_1$ can be chosen arbitrarily small and $\beta_1 > 1/e$ (if a
    large $\beta_1$ is required, this can be achieved by decreasing
    $\alpha_1$ appropriately).
\end{proof}

\subsection{Proof of Proposition~\ref{prop:cml}}
\label{sec:appendix_2}

Note again that since $\max_{w \ge 0} \varphi_{\mu}(w)=1/e$, for every
initial condition $\Xi_0 \in \R_+^{\Z^d}$ of the coupled map lattice
defined in \eqref{eqn:cmlattice} we have $\Xi_1 \in [0, 1/e]^{\Z^d}$.
Thus we can assume without loss of generality that
$0 \le \Xi_0(z) \le 1/e$ for every $z \in \Z^d$. Assume moreover that
$\Xi_0(z_0)>0$ for some $z_0 \in \Z^d$, as it otherwise obviously holds
that $\Xi_n \equiv 0$ for all $n$. The proof follows ideas from Section 4 in~\cite{birkner2007survival}.

\begin{proof}[Proof of Proposition~\ref{prop:cml}]
  Fix $\varepsilon >0$ and let $a > 1 $ and $b > 0$ be such that
  $\psi(w)= aw \wedge b$ satisfies $\varphi_{\mu}(w) \ge \psi(w)$ for
  every $w \in [0,1]$. Since $\theta_{\mu}$ is a stable fixpoint when
  $\mu \in (1, e^2)$, we can choose sequences $(\alpha_m)_{m \ge 0}$,
  $(\beta_m)_{m \ge 0}$ as in Lemma~\ref{lem:alpha_sequences} with
  $\alpha_1 < b/2$ and a suitable $\beta_1 > 1/e$, such that
  $\varphi_{\mu}([\alpha_m,\beta_m]) \subseteq(\alpha_{m+1},\beta_{m+1})$
  and $\beta_{m^*}-\alpha_{m^*} < \varepsilon$ for some $m^* \in \N$.

  For a fixed $z \in \Z^d$ we show that there exists $n_0 > m^*$ such
  that $\Xi_n(z) \in [\alpha_{m^*}, \beta_{m^*}]$ for all $n \ge n_0$.
  We start by showing that
  \begin{align}
    \label{eqn:cml_lb}
    \Xi_n(z) \ge  \sum_{y \in \Z^d} p^{(n)}(z,y) \Big[ \big( a^n \Xi_0(y)\big) \wedge b \Big],
  \end{align}
  where $p^{(n)}(\cdot,\cdot)$ are the $n$-step transition probabilities
  of a random walk whose steps are uniformly distributed in
  $B_R(0) \cap \Z^d$. We can check \eqref{eqn:cml_lb} by induction. Using
  Jensen's inequality, it holds that
  \begin{align*}
    \Xi_{n+1}(z) = \varphi_{\mu}( \delta_R(z; \Xi_n))
    & \ge \psi \bigg( V_R^{-d} \, \sum_{x \in B_R(0)}
      \Xi_n(z+x) \bigg) \notag \\
    & \ge V_R^{-d} \, \sum_{x \in B_R(0)}
    \psi \big( \Xi_n(z+x)\big).
  \end{align*}
  Using the inductive assumption,
  \begin{align*}
    \psi \big( \Xi_n(z+x)\big) & \ge \bigg[ a \sum_{y \in \Z^d} p^{(n)}(z+x,y) \Big( \big( a^n \Xi_0(y)\big) \wedge b \Big) \bigg]  \wedge b\\
    & = \sum_{y \in \Z^d} p^{(n)}(z+x,y) \Big( \big( a^{n+1} \Xi_0(y)\big) \wedge ab \Big) \wedge b \\
    & \ge \sum_{y \in \Z^d} p^{(n)}(z+x,y) \Big( \big( a^{n+1} \Xi_0(y) \big) \wedge b \Big) \wedge b \\
    & =  \sum_{y \in \Z^d} p^{(n)}(z+x,y) \Big( \big( a^{n+1} \Xi_0(y)\big) \wedge b \Big),
  \end{align*}
  so
  \begin{equation*}
    \Xi_{n+1}(z) \ge V_R^{-d} \, \sum_{x \in B_R(0)}
    \sum_{y \in \Z^d} p^{(n)}(z+x,y)
    \Big( \big( a^{n+1} \Xi_0(y)\big) \wedge b \Big)
  \end{equation*}
  and the conclusion follows from the fact that
  \begin{equation*}
    V_R^{-d}  \sum_{x \in B_R(0)} p^{(n)}(z+x, y)
    = \sum_{x \in B_R(0)} p(z, z+x) p^{(n)}(z+x, y) = p^{(n+1)}(z,y).
  \end{equation*}

  For our fixed choice of $z$, we show that
  \begin{equation}\label{eqn:iter_0}
    \Xi_n(x)\in [\alpha_1, \beta_1] \text{ for all } n \ge n_0 \text{ and }
    \norm{x- z} \le 2 Rm^*.
  \end{equation}

  Take
  $n_1>\big( 4Rm^*+2  \norm{ z-z_0 } \big)^2 \vee \big(\big(\ln(b)-\ln(\Xi_0(z_0))\big)/\ln(a)\big)$
  large enough. By a local central limit theorem for symmetric finite
  range random walks, cf.\ \cite[Theorem 2.1.1]{lawler2010random} there
  exists $c>0$ such that $p^{(n_1)}(y,z_0) \ge c n_1^{-d/2}$ if
  $\norm{  y-z_0 } \le \sqrt{n_1}$. By letting
  $n_1> \big(\ln(b)-\ln(\Xi_0(z_0))\big)/\ln(a)$ it holds that
  $a^{n_1}\Xi_0(z_0)\wedge b = b$ and hence it follows with
  \eqref{eqn:cml_lb} that
  \begin{equation*}
    \Xi_{n_1}(y) \ge \sum_{w \in \Z^d} p^{(n_1)}(y,w)
    \Big[ \big( a^{n_1} \Xi_0(w)\big) \wedge b \Big]
    \ge p^{(n_1)}(y, z_0) \Big[ (a^{n_1} \Xi_0(z_0)) \wedge b \Big]
    \ge c n_1^{-d/2} b.
  \end{equation*}
  Using \eqref{eqn:cml_lb} again, we deduce that for any $n_2<\sqrt{n_1}/2$
  \begin{align*}
    \Xi_{n_1+n_2}(x) & \ge \sum_{y \in \Z^d} p^{(n_2)}(x,y) \Big[ (a^{n_2} \Xi_{n_1}(y)) \wedge b \Big] \\
    & \ge \sum_{y \in B_{\sqrt{n_1}}(z_0)} p^{(n_2)}(x,y) \Big( \big( a^{n_2} c n_1^{-d/2} \big) \wedge 1 \Big)b.
  \end{align*}
  Choosing $n_2 = d \log n_1 - 2 \log c$ gives that
  $\big( a^{n_2} c n_1^{-d/2} \big) \wedge 1  = 1$ and, since
  $B_{n_2}(x) \subseteq B_{\sqrt{n_1}}(z_0)$ when
  $n_1 > \big( 4Rm^*+2  \norm{ z-z_0 } \big)^2$, the above is larger than $b$.

  Since $b > 2 \alpha_1$ and trivially $\varphi_{\mu}(w)\le 1/e < \beta_1$
  for every $w \ge 0$, this shows \eqref{eqn:iter_0}.
  It follows that
  \begin{equation*}
    \Xi_{n+1}(x) = \varphi_{\mu}(\delta_R(x; \Xi_n)) \in [\alpha_2, \beta_2]
    \text{ for all } n \ge n_0 \text{ and } \norm{ x-z } \le (2m^*-1)R
  \end{equation*}
  and iterating $m^*$ steps shows that
  \begin{equation*}
    \Xi_{n+m^*-1}(x) \in [\alpha_m,\beta_m] \text{ for all }
    n \ge n_0 \text{ and } \norm{x-z} \le m^* R.
  \end{equation*}
  Take $x=z$ to conclude that $\Xi_n(z) \in [\alpha_m, \beta_m]$ for
  $n \ge n_0+m^*$.
\end{proof}

\section{Open Questions}
\label{sec:questions}

We collect here some natural follow-up questions to our results, several
of them were already mentioned in the text.

\begin{figure}[b!]
    \centering
    \includegraphics[width = 0.48\linewidth]{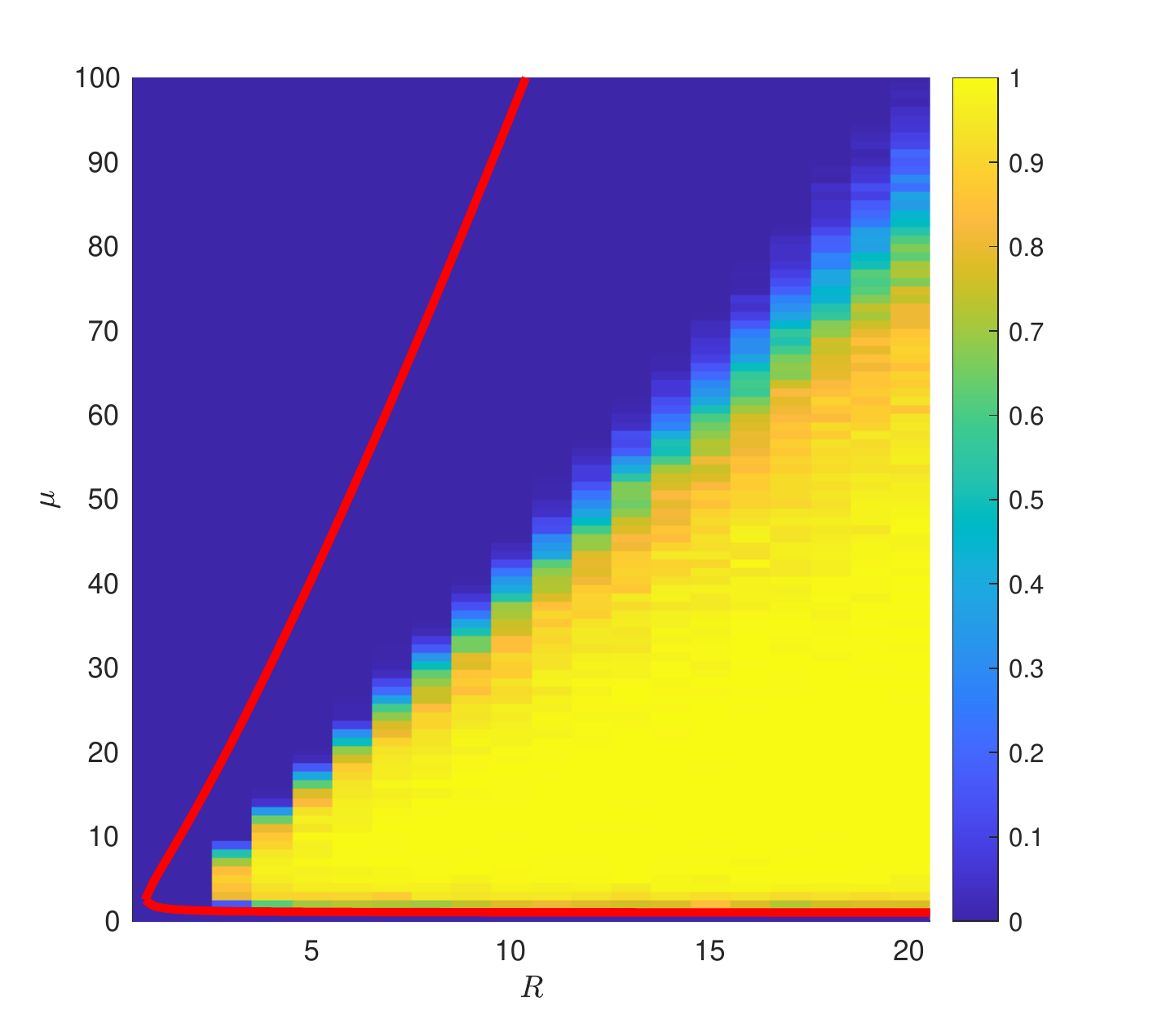}
    \includegraphics[width = 0.48\linewidth]{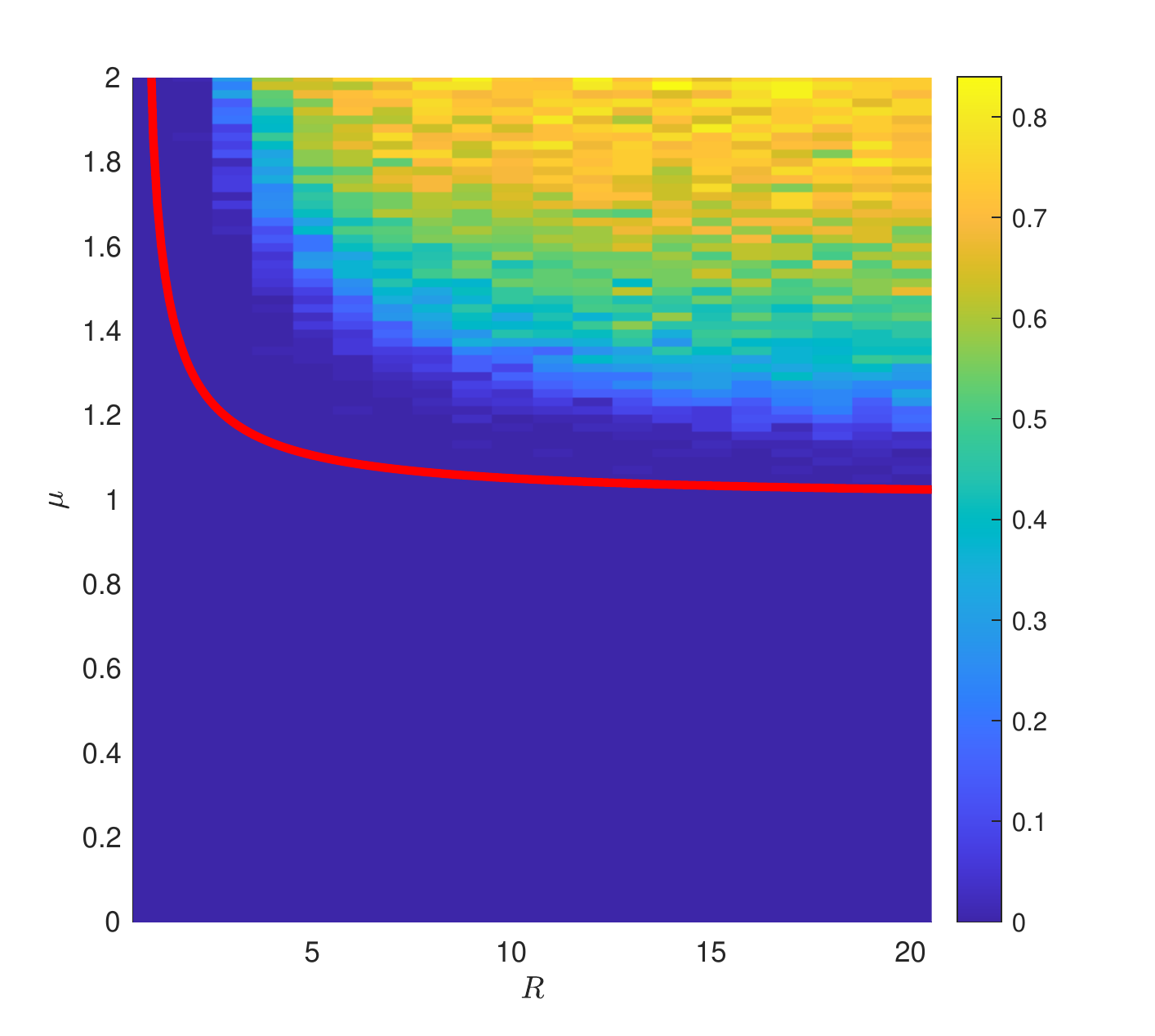}
    \caption{Simulations of the ``phase diagram'' for a one-dimensional
      BARW on $\Z/1000\Z$ with initial condition $\eta_0 = \delta_0$,
      showing a Monte Carlo estimate of the survival probability as a
      function of $R$ and $\mu$. On the left, $200$ iterations of this process
      were run and the proportion of realisations that survived the first
      $250$ generations is shown. Dark blue colour corresponds to no surviving realisations
      and yellow to only surviving realisations. The right image zooms in
      the region of small $\mu $'s. In both
      cases the red line is our theoretical bound for extinction from
      Theorem~\ref{thm:R_fixed_extinction}.}
    \label{fig:phase_diagram}
\end{figure}

\begin{itemize}
  \item Is there a sharp transition? That is, for given $R$, is the survival
  region a (possibly empty) interval of values of $\mu$? See
  also Figure~\ref{fig:phase_diagram}.
  \item Is there always extinction for small values of $R$?
  Simulations suggest that in $d=1$ for $R \leq 2$ the process dies out
  for all values of $\mu$, see Figure~\ref{fig:phase_diagram} again.
   \item Can one give results for ``soft'' annihilation, allowing multiple occupancy of the sites?
  Of course, instead of the strong competition we consider, one could
  look at truncation, keeping for instance at most $N$ particles per site
  at the same time and removing the others. Theorem~1.1 in
  \cite{Mueller2015} implies for this truncation in our model that there
  is, for each $\mu > 1$ and all $R$, a critical value
  $N_c \in \{2,3,4, \ldots \}$ such that the survival probability is $0$
  for $N\leq N_c$ and strictly positive for $N> N_c$.
  \item What is the speed for the stochastic ``travelling waves'' in our model?
  Is there a shape theorem?  
  \item Are there ``discrete travelling waves'' for the coupled map lattice defined in~\eqref{eqn:cmlattice}? If so, how is their deterministic speed related to the speed of the stochastic ``travelling waves'' for the BARW? 
  \item
  The representation \eqref{eqn:next_gen_barw} suggests an interesting
  connection to spread-out oriented site percolation: let each site be
  open with probability $p$ and closed with probability $1-p$, where
  $p = \min \{ \varphi_{\mu}  ((2R+1)^{-d} ), \varphi_{\mu}(1) \}$.
  Connect the open sites at time $n+1$ to their ``parent'' (with distance
    $\le R$) at time $n$, provided it is open. Then the ``wet'' sites at
  time $n$ are a lower bound for $\eta_n$. \\ Let $p_c(d, R)$ be the
  percolation threshold for the event that there is an infinite connected
  cluster. How does the percolation threshold in directed space-time
  percolation behave for $R \to \infty$?\\ We have the following
  conjecture, based on the analogy with ``spread-out oriented bond
  percolation'', see \cite{vanderHofstadSakai2005}:
  \begin{equation*}
    \lim_{R\to\infty}(2R+1)^d p_c (d,R)=1
    \quad \text{for every } d >4.
  \end{equation*}
  It is plausible since the lattice should be more and more tree-like in
  high dimensions but we could not find a proof in the literature. Since
  $\varphi_\mu'(0)>1$, this conjecture would lead to an alternative proof
  of survival for large $R$ in $d >4$.
\end{itemize}

\begin{appendix}\label{appendix}
\section*{}

For completeness and ease of reference, we state the following
concentration estimate for sums of independent Bernoulli random
variables, which is a straightforward consequence of Bernstein's inequality.

\begin{lemma}
  \label{lem:Bernstein}
  Let $(X_i)_{i=1,\dots,n}$ be independent Bernoulli random variables
  with $p_i = \mathbb P(X_i =1)$, and let $S_n:=X_1+\cdots+X_n$. Then,
  setting $\mu_n := \E [ S_n ] = \sum_{i=1}^n p_i$,
  $ \sigma^2_n := \Var S_n =\sum_{i=1}^n p_i(1-p_i)$, and
  $m_n := \max_{1 \le i \le n} \max\{p_i,1-p_i\} = \max_{1 \le i \le n} \mathrm{ess\, sup} \abs{X_i - \E[X_i]}$
  ($\le 1$), we have
  \begin{align}
    \label{eqn:Bernsteinineq}
    \P( S_n - \mu_n \ge w )
    \le \exp\Big( - \frac{w^2}{2\sigma_n^2 + (2/3) m_n w} \Big),
    \quad w \ge 0,
  \end{align}
  and the same bound applies to $\P( S_n - \mu_n \le w )$ for $w \leq 0$.
\end{lemma}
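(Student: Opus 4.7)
The plan is to derive this from the classical Chernoff bound together with a standard moment generating function estimate for bounded centered variables. First I would introduce the centered variables $Y_i := X_i - p_i$; these are independent, mean zero, bounded by $|Y_i| \le m_n$ almost surely, and have variance $\Var(Y_i) = p_i(1-p_i)$. Applying the exponential Markov inequality gives, for every $\lambda > 0$,
\begin{equation*}
    \P(S_n - \mu_n \ge w) \le e^{-\lambda w} \prod_{i=1}^n \E[e^{\lambda Y_i}].
\end{equation*}

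The next step is the classical moment bound: for $0 \le \lambda < 3/m_n$,
\begin{equation*}
    \E[e^{\lambda Y_i}] \le \exp\!\left( \frac{\lambda^2 p_i(1-p_i)}{2(1 - m_n\lambda/3)} \right).
\end{equation*}
This follows by expanding $e^{\lambda Y_i} = 1 + \lambda Y_i + \sum_{k\ge 2}(\lambda Y_i)^k/k!$, bounding $|Y_i|^k \le m_n^{k-2} Y_i^2$ for $k \ge 2$, taking expectations (the linear term vanishes), summing the resulting geometric series, and finally using $1+x \le e^x$. Taking the product over $i$ and substituting yields
\begin{equation*}
    \P(S_n - \mu_n \ge w) \le \exp\!\left( -\lambda w + \frac{\lambda^2 \sigma_n^2}{2(1 - m_n\lambda/3)} \right).
\end{equation*}

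Finally I would optimise over $\lambda$ with the standard choice $\lambda = w/(\sigma_n^2 + m_n w/3)$, which lies in $(0, 3/m_n)$ for $w \ge 0$ and produces exactly the exponent $-w^2/(2\sigma_n^2 + (2/3) m_n w)$ on the right-hand side of \eqref{eqn:Bernsteinineq}. For the lower tail with $w \le 0$, the symmetric statement is obtained by applying the same argument to $-Y_i$ in place of $Y_i$; these are still independent, mean zero, bounded by $m_n$ in absolute value, and have the same variances, so no new input is required. There is no genuine obstacle here, as the lemma is a textbook instance of Bernstein's inequality; the only point requiring minor care is ensuring the optimising $\lambda$ satisfies the constraint $\lambda < 3/m_n$, which is immediate from the explicit formula.
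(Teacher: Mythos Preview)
Your proof is correct and self-contained: you derive Bernstein's inequality from first principles via the Chernoff method, the standard MGF bound obtained from $k! \ge 2\cdot 3^{k-2}$, and explicit optimisation in $\lambda$. The paper takes a shorter route: it simply quotes the ready-made inequality $\P(S_n \ge \mu_n + t\sigma_n) \le \exp\bigl(-t^2/(2 + 2m_n t/(3\sigma_n))\bigr)$ from \cite[Ineq.~(8)]{Bennett1962} and then reparametrises $t\sigma_n = w$ to obtain \eqref{eqn:Bernsteinineq}; the lower tail follows by applying the same to the $1-X_i$. So you are effectively reproducing the proof that the paper outsources to the reference. Your version has the advantage of being self-contained and making the role of $m_n$ transparent, at the cost of a few extra lines; the paper's version is terser but leaves the reader to consult Bennett.
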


\begin{proof}
  By Bernstein's inequality (see e.g.~\cite[Ineq.~(8)]{Bennett1962}), for
  every $t\ge 0$,
  \begin{align*}
    \P( S_n \ge \mu_n + t \sigma_n )
    \le \exp\Big( - \frac{t^2}{2+ 2 m_nt/(3\sigma_n)} \Big)
    = \exp\Big( - \frac{(\sigma_n t)^2}{2\sigma_n^2 + (2/3) m_nt \sigma_n}
      \Big).
  \end{align*}
  Reparametrising $t \sigma_n = w$ (and implicitly assuming $\sigma_n>0$,
    otherwise the problem becomes trivial) we can rewrite this as
  \eqref{eqn:Bernsteinineq}.

  Applying the argument to the $1-X_i$'s gives the same bound for
  $\P( S_n - \mu_n \le w )$, for $w \leq 0$.
\end{proof}

\end{appendix}

\begin{funding}
  The work was supported by Deutsche Forschungsgemeinschaft through DFG
  project no.\ 443869423 and by Schweizerischer Nationalfonds through SNF
  project no.\ 200021E\_193063 in the context of a joint DFG-SNF
  project within in the framework of DFG priority programme SPP 2265
  Random Geometric Systems.
\end{funding}

\smallskip
The authors would like to thank two anonymous referees and the associate editor who gave us constructive feedback that helped us to improve the presentation and correct inaccuracies.

\bibliographystyle{imsart-number}
\bibliography{my_bibliography}

\end{document}